\documentclass[a4paper,10pt,reqno]{amsart}

\usepackage{amsmath,amsfonts,amsthm,amssymb,dsfont,mathabx}
\usepackage[alphabetic]{amsrefs}
\usepackage[OT4]{fontenc}
\usepackage{enumerate}
\usepackage{mathrsfs,mathtools}
\usepackage{enumerate, xspace}
\usepackage[pdftex]{graphicx}
\usepackage{color}
\usepackage{float}
\usepackage[center, font=small,labelfont=sc,labelsep=none]{caption}

\long\def\symbolfootnote[#1]#2{\begingroup
\def\thefootnote{\fnsymbol{footnote}}\footnote[#1]{#2}\endgroup}

\newcommand{\ignore}[1]{}

\newtheorem{theorem}{Theorem}[section]
\newtheorem{lemma}[theorem]{Lemma}

\newtheorem{prop}[theorem]{Proposition}

\newtheorem{main}{Theorem}

\theoremstyle{definition}
\newtheorem{rem}[theorem]{Remark}

\newcommand{\executeiffilenewer}[3]{%
\ifnum\pdfstrcmp{\pdffilemoddate{#1}}%
{\pdffilemoddate{#2}}>0%
{\immediate\write18{#3}}\fi%
}
\newcommand{%
\executeiffilenewer{.svg}{.pdf}%
{inkscape -z -D --file=.svg %
--export-pdf=.pdf --export-latex}%
\input{.pdf_tex}%
}[1]{%
\executeiffilenewer{#1.svg}{#1.pdf}%
{inkscape -z -D --file=#1.svg %
--export-pdf=#1.pdf --export-latex}%
\input{#1.pdf_tex}%
}

\sloppy
\binoppenalty=10000
\relpenalty=10000

\begin{document}

\title[Iterated Medial Triangle Subdivision]{Iterated Medial Triangle Subdivision in Surfaces of Constant Curvature}

\author[F.~Brunck]{Florestan Brunck}
\address{McGill University\\
 Burnside Hall, 805 Sherbrooke Street West, H3A 0B9, Montreal, QC, Canada}
\email{florestan.brunck@mail.mcgill.ca}

\begin{abstract}
\noindent
Consider a geodesic triangle on a surface of constant curvature and subdivide it recursively into 4 triangles by joining the midpoints of its edges. We show the existence of a uniform $\delta>0$ such that, at any step of the subdivision, all the triangle angles lie in the interval $(\delta, \pi -\delta)$. Additionally, we exhibit stabilising behaviours for both angles and lengths as this subdivision progresses.
\end{abstract}

\maketitle

\section{Introduction}
\label{sec1}
In this section, we motivate the study of this subdivision in a non-Euclidean setting, introduce some notation, and state our main results. 

We first define what we call the \textit{iterated medial triangle subdivision} (see Figure \ref{fig:example}). In our setting, all geodesics will be taken to be minimal. A \textit{geodesic triangle} $T$ in the surface $M^2_\kappa$ of constant curvature $\kappa$ is defined as a triple of points of $M^2_\kappa$, together with a choice of three geodesic segments joining each pair of points. If $\kappa \leq 0$, $M^2_\kappa$ is uniquely geodesic and any triple of points defines a unique geodesic triangle. If $\kappa > 0$ however, there exists a unique geodesic between two points if and only if the distance between them is strictly less than $\frac{\pi}{\sqrt{\kappa}}$ (\cite{BH}). For our subdivision to be well-defined in the positive curvature case, we then require the three vertices of our triangle to lie in the same open hemisphere (the largest uniquely geodesic convex set in $M_\kappa^2$, see \cite{BH}). Equivalently, we could require the perimeter of our triangles to be strictly less than $2\pi$. In the positive curvature setting, we shall then understand the meaning of ``geodesic triangle'' to include these restrictions on the possible triples of points. For all $\kappa$, we define the iterated medial triangle subdivision of a geodesic triangle $T\subset M_\kappa^2$ inductively, as the following sequence $T_0, T_1, T_2, \ldots$ of refining triangulations:
\begin{itemize}
    \item $T_0=T$
    \item $T_{n+1}$ is obtained from $T_n$ by adding the midpoints of the edges of $T_n$ and, within each triangle of $T_n$, pairwise connecting its 3 midpoints by geodesic segments  (this creates $4$ new sub-triangles for each triangle of $T_n$). 
\end{itemize}

\begin{figure}[H]
	\begin{center}     
    	\includegraphics[width=\textwidth]{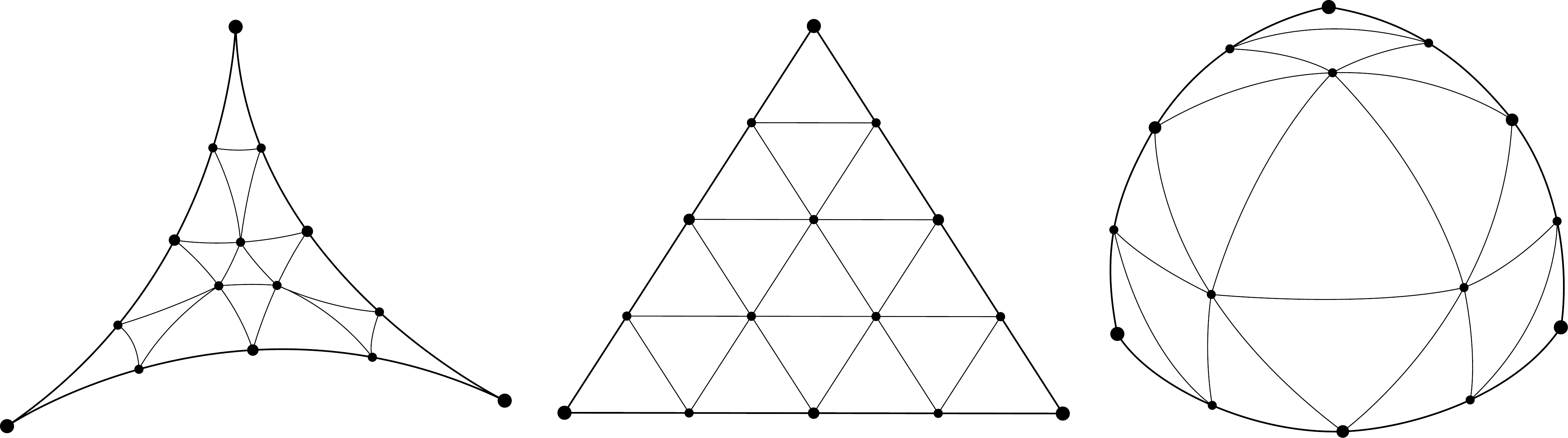}
        	\caption{: The first two medial triangle subdivisions of a triangle in $\mathbb H^2$, $\mathbb E^2$ and $\mathbb S^2$.}
        	\label{fig:example}
	\end{center}
\end{figure}

Our main work is to show that this elementary subdivision behaves ``nicely'' with respect to both lengths and angles, in a sense made precise by our three main theorems, A, B and C. Before stating our theorems, we point out that it is enough to prove each theorem in the specific case where $\kappa=\pm 1$. The general case can be reduced to the previous one by rescaling the spherical/hyperbolic metric by a constant $|\kappa|^{-\frac{1}{2}}$. Indeed, doing so the curvature becomes $\kappa$, but angles are not affected.

\begin{main}
\label{thm:main}
For any geodesic triangle T in $M_\kappa^2$, there exists $\delta>0$ such that, for all $n$, all the angles of $T_n$ lie in the interval $(\delta, \pi -\delta)$.
\end{main}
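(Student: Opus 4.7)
The strategy is to treat the spherical and hyperbolic cases as small perturbations of the Euclidean medial subdivision, which preserves angles exactly (each sub-triangle is similar to the parent, with the central one reversed). After the reduction to $\kappa = \pm 1$ already given in the excerpt, the argument proceeds in three stages.

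\emph{Local perturbation estimates.} For a geodesic triangle $T \subset M^2_{\pm 1}$ of diameter $d \leq d_0$, Taylor-expanding the hyperbolic or spherical laws of cosines and sines about the Euclidean limit yields two estimates: (i) the angles of each of the four sub-triangles differ from the angles of $T$ (suitably matched) by at most $C_1 d^2$; and (ii) the diameter of each sub-triangle is at most $\rho(d)\cdot d$ with $\rho(d) \to 1/2$ as $d \to 0$. The leading-order corrections in both are controlled by $|\kappa|\cdot\mathrm{area}(T) = O(d^2)$.

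\emph{Asymptotic bound for small triangles.} Suppose some descendant $T'$ at step $N$ has diameter $d_* < d_0$ small enough that $\rho_0 := \rho(d_*) < 1$ and $C_1 d_*^2/(1-\rho_0^2) < \tfrac12 \min_i\bigl(\alpha_i(T'),\, \pi-\alpha_i(T')\bigr)$. A straightforward induction applying (i) and (ii) at each step shows that every descendant of $T'$ has diameter at most $\rho_0^k d_*$ at depth $k$ and angles within $C_1 d_*^2/(1-\rho_0^2)$ of those of $T'$. All such descendants therefore have angles in a compact subinterval of $(0,\pi)$.

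\emph{Reduction to the small-triangle regime.} It remains to show that for any initial triangle $T$, all descendants eventually fall below the threshold $d_*$ after finitely many subdivisions. In the spherical case this is immediate, since triangles lie in an open hemisphere and so have bounded diameter to which (ii) can be iteratively applied. In the hyperbolic case one verifies directly from the law of cosines that each corner sub-triangle inherits two sides of length $d/2$, while the central medial sub-triangle, though possibly still large, contracts by a definite factor at each step. During these finitely many ``pre-shrinking'' subdivisions one produces only finitely many sub-triangles, each a bona fide geodesic triangle with angles strictly in $(0,\pi)$; taking the minimum of their angular distances to $\{0,\pi\}$, combined with the asymptotic bound from the second stage, furnishes the desired $\delta > 0$.

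The main obstacle is the length-contraction statement for large hyperbolic triangles: the central medial triangle's geometry is a non-trivial function of the parent's sides and angles, and one must verify that the diameter strictly decreases across \emph{all} triangle shapes, including the near-degenerate regime where the parent approaches an ideal triangle. A compactness or direct asymptotic argument on the shape parameters appears to be the cleanest way to close this gap.
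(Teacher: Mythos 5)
Your strategy---treating the curved cases as an $O(d^2)$ perturbation of the angle-preserving Euclidean subdivision---is genuinely different from the paper's (which proves multiplicative stabilisation of lengths and heights, Theorem \ref{thm:lengths} and Proposition \ref{lemma:altitude}, and deduces Theorem \ref{thm:angles}), but it has a gap that the paper's machinery exists precisely to fill. The additive estimate (i), $|\Delta\alpha|\leq C_1 d^2$, is not sufficient even if granted. Summing it along a nested sequence past the threshold level $N$ gives $|\alpha_\infty-\alpha_N|\leq C_1 d_N^2/(1-\rho_0^2)$, which yields $\alpha_\infty>0$ only if the angles at level $N$ already exceed $C_1 d_N^2/(1-\rho_0^2)$. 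Your stage 3 takes $\delta_N>0$ to be the minimum angular distance to $\{0,\pi\}$ over the finitely many level-$N$ triangles, but nothing forces $\delta_N\gtrsim d_N^2$: subdividing further shrinks $d$ but may shrink $\delta$ as well, and if $\delta_M\leq 2C_1 d_M^2/(1-\rho_0^2)$ at every level $M$, K\"onig's lemma produces a nested sequence of ever thinner triangles whose smallest angle tends to $0$---a scenario fully consistent with an additive $O(d^2)$ bound. What is needed is a multiplicative bound of the form $\alpha_{n+1}/\alpha_n\in[1-Cd_n^2,\,1+Cd_n^2]$, i.e.\ control of the heights relative to the side lengths uniformly over degenerate shapes; that is exactly the content of Theorem \ref{thm:angles}, and it does not follow from a formal Taylor expansion of the trigonometric laws, whose implicit constants degenerate with the triangle (compare Examples 1 and 2).

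Your reduction to the small-triangle regime also places the difficulty at the wrong end. In the hyperbolic case the contraction is immediate: $\mathbb H^2$ is CAT(0), so each medial side is at most half of the opposite side of the parent (Lemma \ref{lemma:mid-edge}), and since the diameter of a geodesic triangle equals its longest side, every sub-triangle---the central one included, even for near-ideal parents---has diameter at most half that of its parent; the step you single out as ``the main obstacle'' is a one-line comparison argument. The spherical case, by contrast, is where the real work lies: Example 4 exhibits initial triangles for which, for any prescribed $N$, the triangles of $T_N$ still have diameter greater than $\frac{\pi}{2}$, so there is no uniform contraction factor and the hemisphere bound alone does not make the reduction ``immediate''. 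The paper needs the compactness argument of Lemma \ref{claim1} to show that the edges eventually become small at all, and Lemma \ref{lemma:lengths} to upgrade this to geometric decay.
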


This theorem will be derived as an immediate consequence of our Theorem B. However, we point out to the reader that Theorem A does not require the full strength of Theorem B and can be obtained through a faster route (see Remark \ref{alternativeroute}). Before stating our remaining theorems, we first need to introduce some notation to make clear the statements of Theorems B and C.

To fix our notation, we will consider a sequence of \textit{nested} triangles $t_0, t_1, \ldots$ with $t_i\in T_i, i\in \mathbb N$, such that $t_{n+1}$ is one of the 4 triangles of the medial triangle subdivision of $t_n$. We name the sides of $t_0$ as $a_0$,  $b_0$, and $c_0$ and use the following notation scheme, by analogy with the Euclidean case (c.f. Figure \ref{fig:sequence}):

\begin{enumerate}[(a)]
    \item If $t_{n+1}$ is obtained as the innermost triangle in the medial decomposition of $t_n$ (as seen on the left diagram), then we name each of its edges according to the only edge of  $t_{n}$ it does not intersect, e.g. $a_{n+1}$ denotes the side of $t_{n+1}$ not intersecting $a_{n}$. In this case, we call $a_{n+1}$ the \textit{parallel side} of $a_{n}$ in $t_{n+1}$.
    \item If $t_{n+1}$ is obtained from $t_n$ as one of the three outer triangles of the medial subdivision (as seen on the right diagram), then two of its sides are contained in $t_n$. Those sides inherit their letter from the associated side in $t_n$, e.g. the side of $t_{n+1}$ contained in the side labelled $a_{n}$ of $t_n$ is named $a_{n+1}$. The remaining side of $t_n$ is to be named according to the convention of case (a).
\end{enumerate}

As for angles, $\alpha_n$ (resp. $\beta_n, \gamma_n)$ will denote the angle opposite $a_n$ (resp. $b_n, c_n$). 
\begin{figure}[H]
	\begin{center}     
    	\includegraphics[width=1\textwidth]{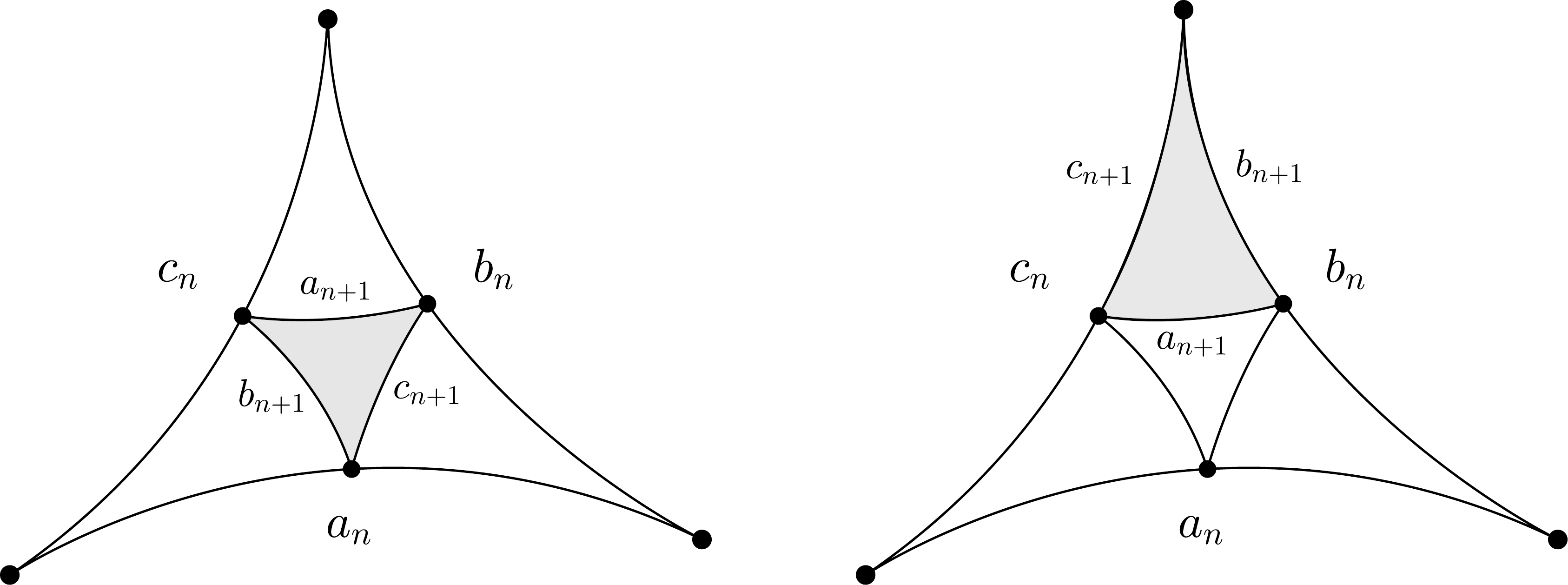}
        	\caption{: Case (a) on the left and (b) on the right. In both cases $t_{n}$ is the outer triangle and the nested triangle $t_{n+1}$ is highlighted in grey.}
        	\label{fig:sequence}
	\end{center}
\end{figure}

\begin{main}
\label{thm:angles}
For any sequence of nested triangles $t_0,t_1,\ldots$ and for all $n\in\mathbb N$, there exists $l_\alpha,L_\alpha>0$ such that:
\[ \alpha_0 \cdot l_{\alpha} < \alpha_n < \alpha_0\cdot L_\alpha \]
In addition, $l_\alpha$ (resp. $L_\alpha$) approaches $1$ from below (resp. above) as all the side lengths of $t_0$ become smaller. 
\end{main}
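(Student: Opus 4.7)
The plan is to exploit the observation that in Euclidean geometry the medial subdivision preserves angles exactly: under our labeling convention each of the four sub-triangles is similar to the parent, so $\alpha_{n+1}^E = \alpha_n^E$, and likewise for $\beta$ and $\gamma$. Consequently, all angular variation in $M_\kappa^2$ comes from the non-Euclidean deviation of small triangles from their Euclidean counterparts, which is quadratic in the diameter; since diameters roughly halve under subdivision, this deviation is summable.

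First I would establish a Euclidean comparison lemma: for any geodesic triangle in $M_{\pm 1}^2$ with sides $a, b, c$ and diameter $d = \max(a, b, c)$, each angle $\alpha$ satisfies $|\alpha - \alpha^E| = O(d^2)$, where $\alpha^E$ is the angle computed from the same side lengths via the Euclidean law of cosines. This follows by Taylor-expanding the spherical or hyperbolic law of cosines about the Euclidean limit. Next, I would prove a side-length contraction: applying the same law of cosines to the subtriangle with vertices $A, M_b, M_c$ (whose sides $b/2, c/2$ meet at the original angle $\alpha$), the third side --- a side of the inner triangle in case (a) --- has length $a/2 + O(d^3)$. In case (b) one side is exactly half its parent by construction, and the remaining new side is handled as in case (a). Iterating yields $d_n \leq C \cdot 2^{-n} d_0$ for all $n$.

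Combining these ingredients with the Euclidean invariance of angles, I would derive a single-step estimate
\[ |\alpha_{n+1} - \alpha_n| \leq C \, d_n^2 / \sin \alpha_n. \]
Telescoping and exploiting the geometric decay of $d_k$, the cumulative drift is
\[ |\alpha_n - \alpha_0| \leq \sum_{k=0}^{n-1} C \, d_k^2 / \sin \alpha_k \leq C' \, d_0^2 / \sin \alpha_0, \]
provided each $\alpha_k$ stays bounded below. This produces
\[ L_\alpha = 1 + C'\, d_0^2 / (\alpha_0 \sin \alpha_0), \qquad l_\alpha = 1 - C' \, d_0^2 / (\alpha_0 \sin \alpha_0), \]
both tending to $1$ as the side lengths of $t_0$ shrink.

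The main obstacle is the bootstrap: the single-step estimate requires $\alpha_k$ to remain bounded away from zero, which is essentially the content of Theorem B. The resolution is induction: provided $d_0$ is small enough relative to $\sin \alpha_0$, the accumulated drift stays below $\alpha_0/2$, so each $\alpha_n \geq \alpha_0/2$ and the induction closes. For triangles with large diameter the asymptotic sharpness is vacuous, but the mere existence of positive $l_\alpha, L_\alpha$ still needs justification; this can be handled via the independent lower bound on angles alluded to in the introduction, which guarantees uniform positivity of every angle throughout the subdivision.
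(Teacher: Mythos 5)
Your perturbative strategy --- compare each $t_n$ with its Euclidean model, use the exact angle-invariance of the Euclidean medial subdivision, and telescope an $O(d_n^2)$ per-step drift --- is genuinely different from the paper's route (which bounds $2^n a_n$ and $2^n h_n$ separately and recovers $\sin\alpha_n$ as the ratio $\sinh h_n/\sinh b_n$), and for initial triangles of small diameter it would give a clean and transparent proof in which the asymptotic statement ``$l_\alpha, L_\alpha \to 1$'' falls out immediately. Two caveats even in that regime: the comparison estimate $|\alpha-\alpha^E|=O(d^2)$ is not uniform over thin triangles --- what is uniform is $|\cos\alpha-\cos\alpha^E|=O(d^2)$, so the angle estimate carries a factor of $1/\sin\alpha$ (your single-step bound has this factor, but your stated lemma does not) --- and the bootstrap must therefore keep $\alpha_k$ away from \emph{both} $0$ and $\pi$, i.e.\ keep $\sin\alpha_k$ bounded below, not merely maintain $\alpha_k\geq\alpha_0/2$ (which is vacuous protection when $\alpha_0$ is obtuse).

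The genuine gap is the reduction to small triangles. The theorem quantifies over arbitrary $t_0$, and your argument needs $d_0$ small relative to $\sin\alpha_0$ before any of the Taylor expansions are valid. In the hyperbolic case this is recoverable: $d_{n+1}\leq d_n/2$ holds unconditionally (the CAT(0) comparison of Lemma~\ref{lemma:mid-edge}), so after finitely many steps one lands in the perturbative regime and the finitely many initial ratios are harmless. In the spherical case, however, the unconditional inequality goes the wrong way ($a_{n+1}\geq a_n/2$), a single subdivision step can multiply a side length by an arbitrarily large factor (Example 3), and proving that the $t_n$ eventually become small at a geometric rate is precisely the content of Lemmas~\ref{claim1} and~\ref{lemma:lengths}, which rest on a nontrivial compactness argument and occupy much of Section~3; your proposal supplies no substitute. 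Worse, the fallback you invoke for large triangles --- ``the independent lower bound on angles alluded to in the introduction'' --- does not exist as an independent result: that lower bound is Theorem~A, which the paper deduces \emph{from} Theorem~B, and even the ``faster route'' of Remark~\ref{alternativeroute} still requires the upper bounds of Theorem~C and the lower bounds of Proposition~\ref{lemma:altitude}. As written, the appeal is circular. To close the argument you would need to import or reprove the spherical shrinking lemmas before the telescoping can begin.
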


This theorem itself will be obtained as a consequence of Theorem C and a similar statement regarding the heights of triangles in our triangulations (Proposition \ref{lemma:altitude}).

\begin{main}
\label{thm:lengths}
For any sequence of nested triangles $t_0,t_1,\ldots$ and for all $n\in\mathbb N$, there exists $l_a, L_a>0$ such that:
\[a_0\cdot l_a \leq  2^n \cdot a_n \leq a_0 \tag{\textit{Hyperbolic geometry}}\]
\[a_0 \leq 2^n \cdot a_n \leq a_0\cdot L_a \tag{\textit{Spherical geometry}}\]
In addition, in the non-trivial cases where there exists at least some integer $n\in\mathbb N$ such that $a_{n+1}$ is obtained as the parallel side of $a_n$ in $t_{n+1}$, the inequalities are strict and $l_a$ (resp. $L_a$) approaches $1$ from below (resp. above) in the hyperbolic (resp. spherical) setting as all the side lengths of $t_0$ become smaller. 
\end{main}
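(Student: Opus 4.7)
The plan is to reduce to $\kappa = \pm 1$ by rescaling, then anchor the argument on an exact comparison between the length of a midline and half of the opposite side. Given a geodesic triangle with sides $a, b, c$ and angle $\alpha$ opposite $a$, let $m_a$ denote the length of the geodesic segment joining the midpoints of $b$ and $c$. Applying the law of cosines to $\cosh a$ and $\cosh m_a$ (respectively $\cos a$ and $\cos m_a$) and simplifying using the double-angle identities, I expect the closed-form identities
\[
\cosh a - \cosh 2m_a \;=\; 2\sinh^2(b/2)\sinh^2(c/2)\sin^2\alpha, \qquad \cos a - \cos 2m_a \;=\; 2\sin^2(b/2)\sin^2(c/2)\sin^2\alpha
\]
in the hyperbolic and spherical cases respectively. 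Both right-hand sides are non-negative, with strict positivity whenever the triangle is non-degenerate. Since $\cosh$ is increasing on $[0,\infty)$ and $\cos$ is decreasing on $[0,\pi]$, this gives $2m_a \leq a$ (hyperbolic) and $2m_a \geq a$ (spherical), strictly so in the non-degenerate case.

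By the naming convention of the nested sequence, $a_{n+1}$ is either exactly $a_n/2$ (when $t_{n+1}$ is the outer sub-triangle at an endpoint of $a_n$) or equal to the midline $m_{a_n}$ of $t_n$ opposite $a_n$ (in the parallel-side cases (i) and (ii)). Combined with the step above, this yields $2 a_{n+1} \leq a_n$ in the hyperbolic case and $2 a_{n+1} \geq a_n$ in the spherical case, with strict inequality precisely when the parallel side is chosen. Iterating delivers the outer inequalities in the statement ($2^n a_n \leq a_0$ hyperbolic, $2^n a_n \geq a_0$ spherical), together with the strict-inequality clause claimed in the non-trivial case.

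For the reverse bounds, set $r_n := 2^n a_n / a_0$ so that $r_n = \prod_{k=0}^{n-1}(2a_{k+1}/a_k)$. Rewriting the key identity as $2\sinh\tfrac{a+2m_a}{2}\sinh\tfrac{a-2m_a}{2} = 2\sinh^2(b/2)\sinh^2(c/2)\sin^2\alpha$ (and analogously for the spherical case), and using $\sinh x = x + O(x^3)$, a routine estimate gives, writing $d_k := \max(a_k,b_k,c_k)$ and for $d_k$ sufficiently small,
\[
\bigl|\, 2 m_{a_k}/a_k - 1\,\bigr| \;\leq\; C\, d_k^2
\]
for some uniform $C>0$ (and a symmetric bound in the spherical case). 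The infinite product $\prod_k(1 \pm C d_k^2)$ then converges to a strictly positive, finite limit precisely when $\sum_k d_k^2 < \infty$. In the hyperbolic case this is immediate: applying the monotonicity step to each of $a$, $b$, $c$ in turn yields $d_k \leq d_0/2^k$. Hence $r_n \geq l_a > 0$; and since $d_0 \to 0$ drives every factor of the product to $1$, we obtain $l_a \to 1$ as the side lengths of $t_0$ shrink.

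The main obstacle is the spherical case, where monotonicity only gives $2a_{k+1} \geq a_k$, so geometric decay of $d_k$ has to be established by a separate argument. I plan to address this by a bootstrap. The triangle inequality $m_a \leq (b+c)/2$ forces the perimeter $P_n := a_n + b_n + c_n$ to be non-increasing, and strictly decreasing on non-degenerate triangles. A compactness argument on the space of triangles in the open hemisphere, using the absence of non-degenerate fixed triangles under the medial subdivision, should give a uniform contraction $P_{n+1} \leq \rho P_n$ for some $\rho < 1$ once $P_n$ lies below a threshold determined by $C$. The loss estimate above then upgrades this to the summability $\sum_k d_k^2 < \infty$, which closes the spherical case and yields $r_n \leq L_a$ with $L_a \to 1$ as $d_0 \to 0$.
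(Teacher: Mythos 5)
Your hyperbolic argument is correct and genuinely different from the paper's: the closed-form identity $\cosh a - \cosh 2m_a = 2\sinh^2(b/2)\sinh^2(c/2)\sin^2\alpha$ checks out, it gives Lemma 3.1 without the CAT($\kappa$) machinery, and the resulting bound $|2m_{a_k}/a_k - 1| \leq C d_k^2$ (the $\sin^2\alpha$ factor is what saves you for thin triangles, via $\sinh b\,\sinh c\,\sin\alpha \lesssim a\cdot h_a$) combined with $d_k \leq d_0 2^{-k}$ closes the lower bound more cheaply than the paper's equidistant-curve/upper-half-plane computation. The paper instead proves $\ln(a_{n-1}/2a_n) < b_n/2$ geometrically and sums a geometric series.

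The spherical case, however, has a genuine gap exactly where you flag the ``main obstacle.'' Your bootstrap needs $P_n$ to eventually drop below a smallness threshold, and you propose to get this from ``compactness plus absence of non-degenerate fixed triangles.'' This fails: the equilateral triangle with vertices equidistributed on the equator is a \emph{degenerate} fixed configuration of the medial subdivision (its midpoints are again equidistributed equatorial points, so $P_{n+1} = P_n = 2\pi$), and it lies in the closure of the admissible configuration space. The paper's Example 4 is constructed precisely to show that admissible triangles can shadow this configuration for arbitrarily many subdivision steps, so the per-step perimeter loss is not bounded below on any set of the form $\{P_n \geq P^*\}$ with $P^* < 2\pi$, and the decreasing sequence $P_n$ could a priori stagnate above your threshold. (Once $P_n$ \emph{is} small your estimate $P_{n+1} \leq \tfrac{1}{2}P_n(1+CP_n^2)$ does give uniform contraction without any compactness; the entire difficulty is reaching that regime.) The paper devotes Lemma \ref{claim1} to this point: it proves that all edge lengths tend to $0$ by taking subsequential limits $A, B, C$ of the three vertex sequences and deriving a contradiction from the nestedness of the $t_n$ --- every $t_n$ must contain all three limit points, but if the limits are not all equal, one step of the (possibly degenerate) limiting subdivision separates them. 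That use of nestedness is the ingredient your compactness sketch is missing, and without it (or an equivalent) the spherical upper bound does not follow.
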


In addition to bringing light on a very natural and elementary object, the study of the medial triangle subdivision in the non-Euclidean setting is motivated by our ongoing work on \textit{acute triangulations} of \textit{Riemannian triangle complexes}. A Riemannian triangle complex is a 2-dimensional simplicial complex in which each simplex is given its own individual Riemannian metric. While acute triangulations have been extensively studied in the Euclidean planar setting, the only existing result in the Riemannian setting is a highly non-constructive existence result for 2-dimensional Riemannian manifolds (\cite{Verdiere}). Even in the constant curvature setting, it is currently unknown whether complexes of spherical or hyperbolic polygons can be acutely triangulated or not. In a follow-up article, we will lay out a constructive existence result for the class of spherical and hyperbolic triangle complexes with finite isometry types. Our methods will exploit Theorem \ref{thm:main} to explicitly transport a new particular Euclidean acute triangulation scheme due to Chris Bishop (\cite{BS}) onto a fine enough medial triangle subdivision of the triangle complex to control the angle distortion occurring during the transport (Fig. \ref{fig:Riemannian} illustrates this heuristic for a hyperbolic triangle complex). 

\begin{figure}[H]
	\begin{center}     
    	\includegraphics[width=\textwidth]{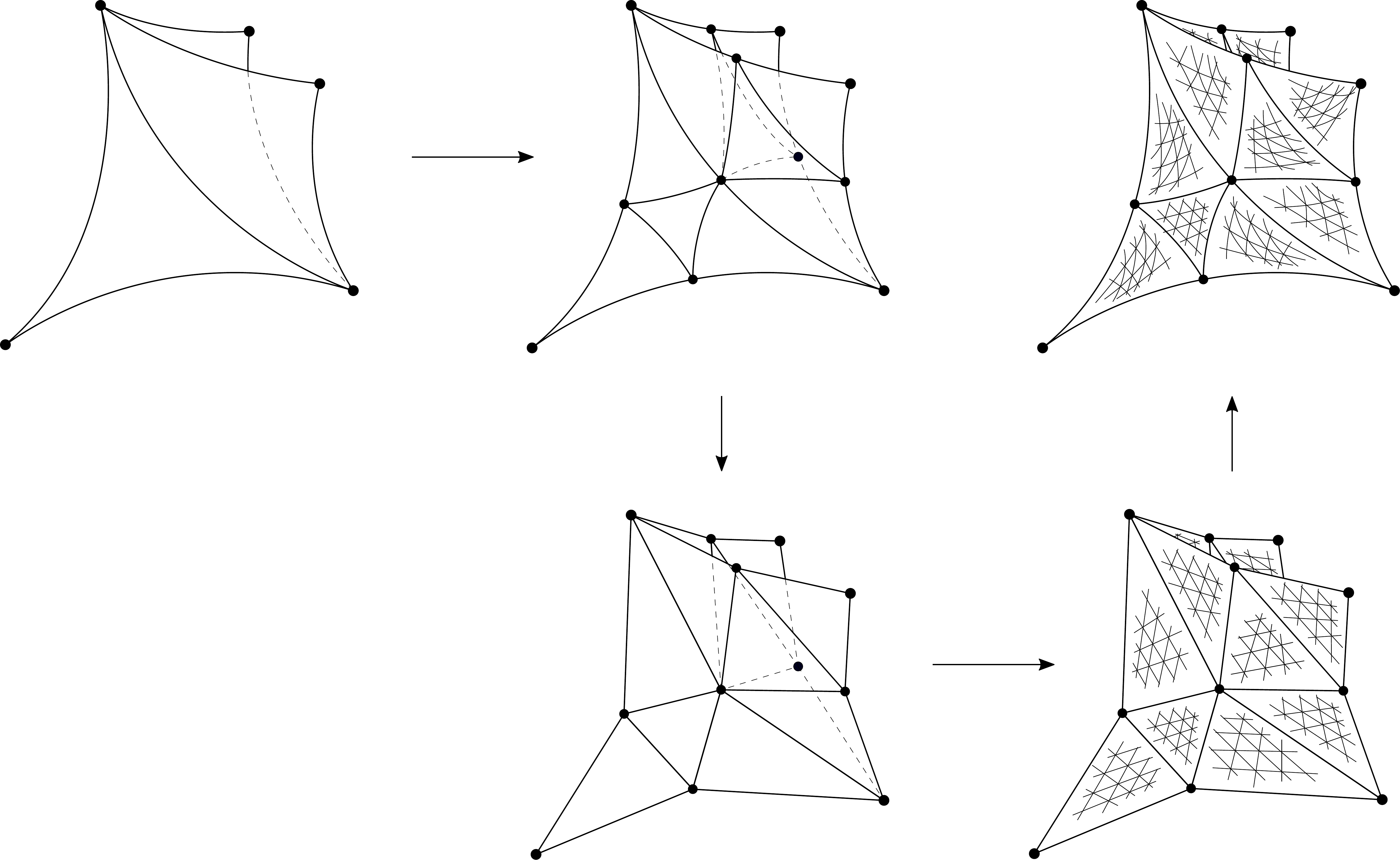}
        	\caption{}
        	\label{fig:Riemannian}
	\end{center}
\end{figure}

Before we proceed with the proof, we provide examples to convince the reader of the unusual behaviours exhibited by the medial triangle subdivision in the non-Euclidean cases. Indeed, in the Euclidean case, this subdivision yields $4$ congruent  triangles that are obtained from the original one by a similarity with scale factor one half. Therefore each iteration preserves the angles and halves the lengths, making our results trivial remarks. However, in the presence of non-zero curvature, the situation is not so straightforward. In fact, as we set off to showcase, a surprising fact about this subdivision in curved spaces is that a single step of the subdivision can be ``arbitrarily degenerate''. This should make our result perhaps slightly more surprising to the reader. 

\section{Examples}
\label{sec2}

\noindent\textbf{Example 1.} We first show how to construct a family of triangles for which the ratio $\alpha_1/\alpha_0$ is unbounded (for one of the 4 possible choices of $\alpha_1$). More strikingly,  $\alpha_0$ can be taken arbitrarily small and $\alpha_1$  arbitrarily close to $\pi$. Consider the following example of an isosceles triangle $t_0$ in the hyperbolic plane (seen in the Poincar\'e disk model on Figure \ref{fig:thong}). We label its vertices $A,B,C$ and its midpoints $D,E,F$, as indicated on the figure. In the Euclidean case, the angle $\alpha_0$ and $\alpha_1$ would be equal. Here however, fixing $A$, we can extend the geodesic segment $BC$ to a line and have the points $B$ and $C$ move further apart from each other at equal speed on this line. Doing so will drag the midpoints $F$ and $E$ arbitrarily close to $B$ and $C$ (in the Euclidean metric), making the angle $\alpha_1$ arbitrarily close to $\pi$. This construction is valid for any choice of $A$, and in particular we can choose $A$ to be arbitrarily close to the boundary, making the angle $\alpha_0$ arbitrarily close to $0$.

\begin{figure}[H]
	\begin{center}     
    	\includegraphics[width=0.55\textwidth]{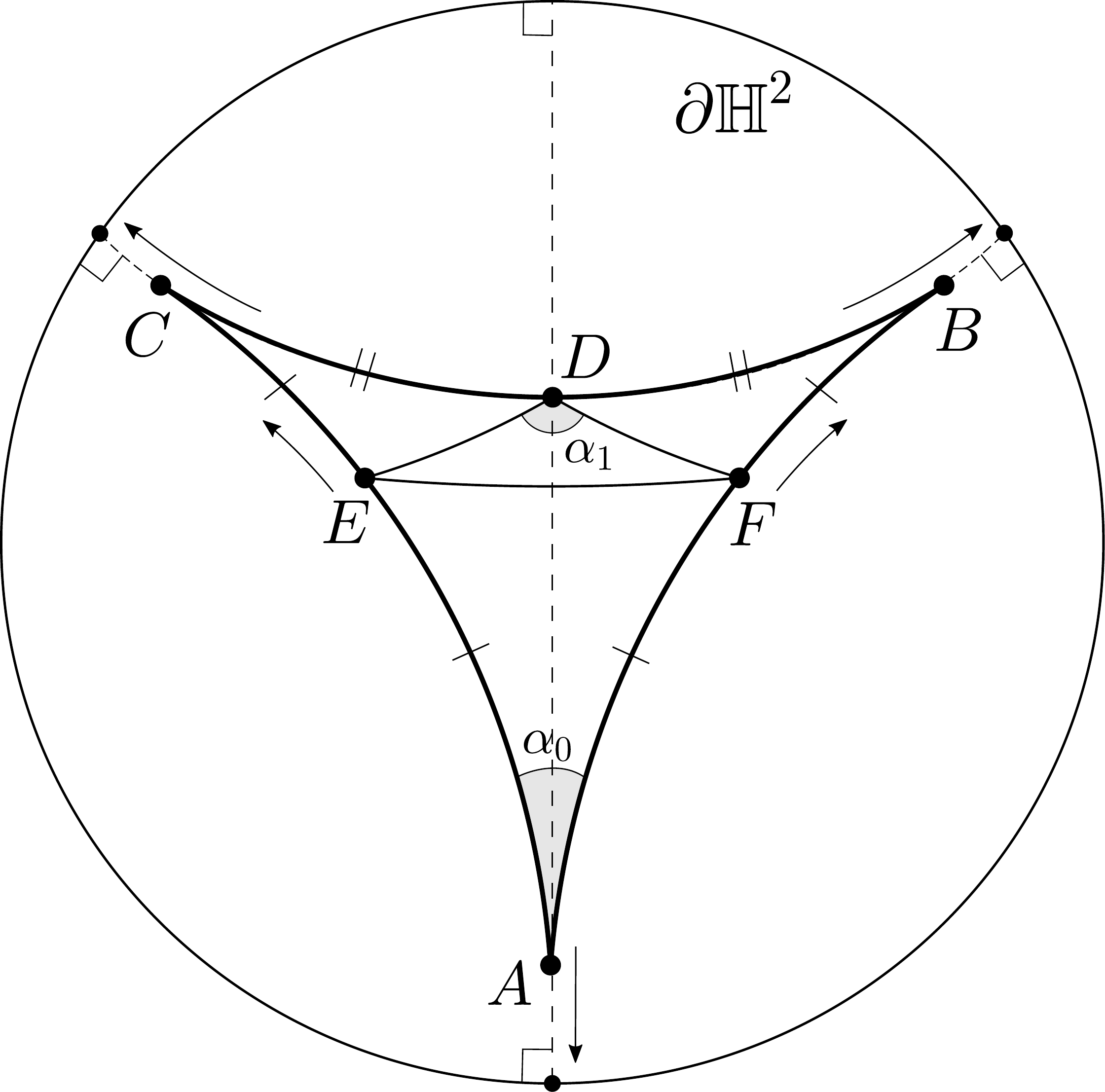}
        	\caption{}
        	\label{fig:thong}
	\end{center}
\end{figure}

\noindent\textbf{Example 2.} The second surprising phenomenon about the medial triangle subdivision in non-Euclidean geometries is that its behaviour with respect to the angles depends on the initial triangle, namely: in certain triangles it will increase the corresponding angles while it will decrease it in others. To showcase this behaviour, we will provide a quantitative geometric criterion for isosceles triangles to exhibit either one behaviour or the other. Consider an isosceles triangle with vertices $A,B,C$ and sides of length $2a$ and $2b$ (see Figure \ref{fig:isosceles}). Let $D,E,F$ be the midpoints of the sides $BC, CA$ and $AB$. Let $Q$ be the point of intersection of the geodesic segments $FE$ and $AD$; let $\alpha,\beta,\alpha',\beta'$ be the angles described on Figure \ref{fig:isosceles}. 

By symmetry, the triangle $DCA$ has a right angle at $D$. For the same reason, $DEQ$ has a right angle at $Q$. Let $u$ denote the length of the geodesic segment $DE$ and $h$ that of the segment $DA$. Hyperbolic trigonometry identities (\cite{WT}, p81) in the right-angled triangle $DEQ$ give us:

\[
\cot \beta' \> \cot \alpha' = \cosh u
\label{eq:Pytha}\tag{1}
\]

Likewise, the hyperbolic sine rule in the triangles $DCE$ and $DEA$ give us the two identities:

\[
\frac{\sin \beta}{\sinh u} = \frac{\sin (\frac{\pi}{2}-\alpha')}{\sinh a} = \frac{\cos \alpha'}{\sinh a}
\]

\[
\frac{\sin \alpha}{\sinh u} = \frac{\sin \alpha'}{\sinh a}
\]

which, combined, give us:

\[
\tan \alpha' = \frac{\sin \alpha}{\sin \beta}
\label{eq:Sine}\tag{2}
\]

\vspace{-2em}
\begin{figure}[H]
	\begin{center}     
    	\includegraphics[width=0.35\textwidth]{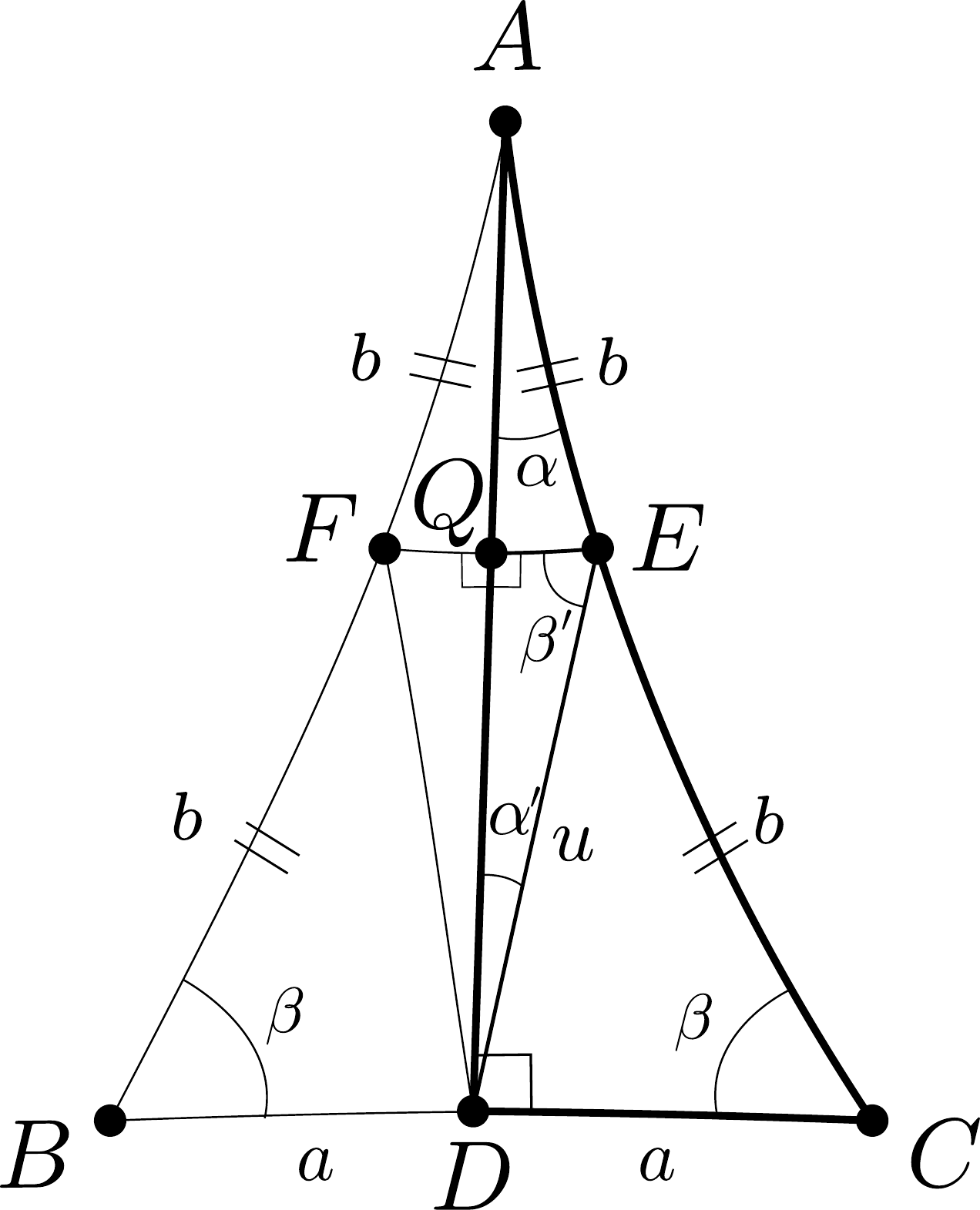}
        	\caption{}
        	\label{fig:isosceles}
	\end{center}
\end{figure}
\vspace{-2em}
Finally, the dual hyperbolic cosine rule (\cite{WT}) applied to the triangle $DBC$ informs us that:

\[
\cos \beta = \sin \alpha \> \sin \frac{\pi}{2}\> \cosh h - \cos \frac{\pi}{2}\>\cos \alpha = \sin \alpha \> \cosh h
\label{eq:Cosine}\tag{3}
\]

We are looking for a condition on the isosceles triangle $ABC$ to ensure that either $\beta' > \beta$ or its converse is true, or equivalently that $\tan \beta' > \tan \beta$ or otherwise. Combining equations (\ref{eq:Pytha}), (\ref{eq:Sine}) and (\ref{eq:Cosine}), we obtain:

\[
\tan \beta' > \tan \beta \iff \left(\frac{\sin \alpha}{\sin \beta} \> \cosh u\right)^{-1} > \frac{\sin \beta}{\cos \beta} \iff \cosh h > \cosh u
\]

There are thus two distinct and opposite scenarios possible, in which either:

\begin{itemize}
    \item $u > h$, in which case the medial subdivision produces a smaller angle, i.e. $\beta'<\beta$ (left diagram of Figure \ref{fig:pathology} shows an example in the Poincar\'e disk model).
    \item  $u < h$, in which case the medial subdivision produces a larger angle, i.e. $\beta'>\beta$  (right diagram of Figure \ref{fig:pathology}).
\end{itemize}
\vspace{-2em}
\begin{figure}[H]
	\begin{center}     
    	\includegraphics[width=\textwidth]{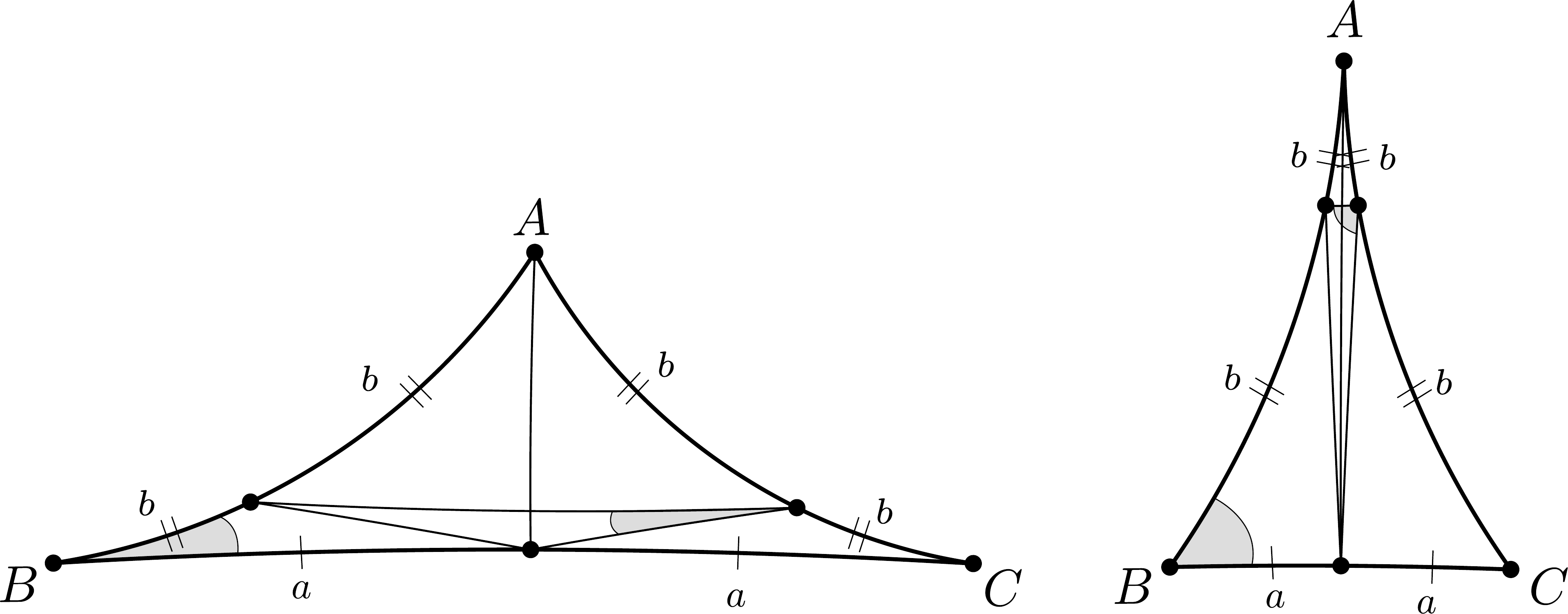}
        	\caption{}
        	\label{fig:pathology}
	\end{center}
\end{figure}

\begin{rem}
The exact same reasoning works transposed in the spherical setting but yields the opposite inequalities and behaviours, namely the two cases on Fig. \ref{fig:pathology} are reversed. This is because equalities (\ref{eq:Sine}),(\ref{eq:Cosine}) and (\ref{eq:Pytha}) all remain the same with regular sines and cosines instead of their hyperbolic counterpart. The two scenarios previously highlighted are however to be swapped, since the cosine function is decreasing on the interval $[0,\pi]$, while the hyperbolic cosine function is increasing on that same interval.
\end{rem}

To briefly address how the subdivision behaves with respect to lengths, we provide two particularly striking examples in the spherical case. 

\noindent\textbf{Example 3.} We first show that, for certain spherical triangles, lengths can be arbitrarily distorted by the medial subdivision. Consider a spherical isosceles triangle $ABC$, with right angle at $A$ and equal sides $|AB|=|AC|$. As $B$ and $C$ approach the antipodal point of $A$, $|BC|$ becomes arbitrarily small, while $|EF|$ will approach a quarter of the equatorial circle between those two poles. In that sense, the spherical upper bound of Theorem B should perhaps appear less natural, as we can create triangles in which one of the sides will have its corresponding side in the next step of the medial triangle subdivision arbitrarily larger. 

\begin{figure}[H]
	\begin{center}     
    	\includegraphics[width=0.45\textwidth]{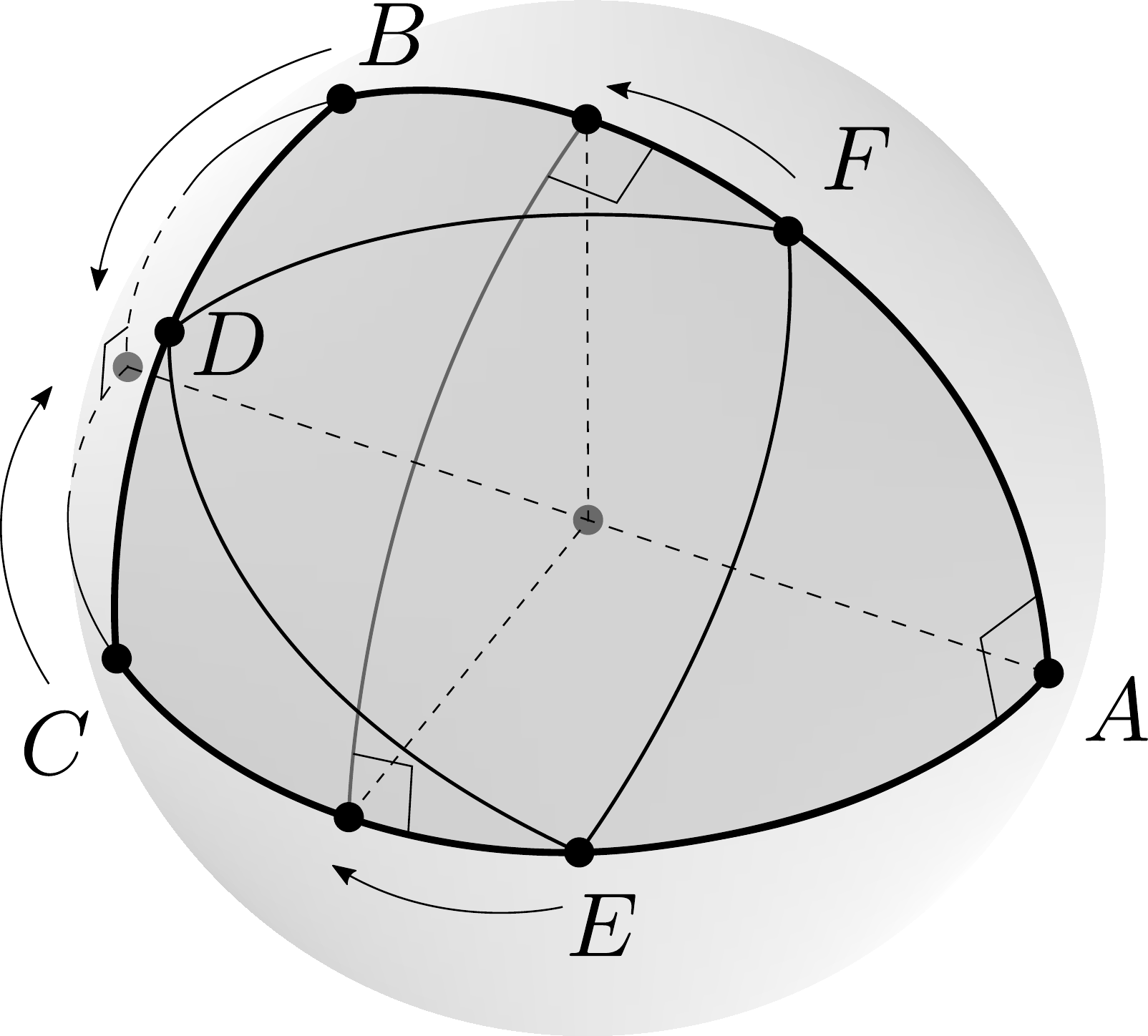}
        	\caption{}
        	\label{fig:bong}
	\end{center}
\end{figure}
\vspace{-1em}

\noindent\textbf{Example 4.} Lastly, we provide an example to show that, for any $\epsilon>0$ and for any given $N\in \mathbb N$, we can find a triangle $T_0^\epsilon$ for which every edge of $T_N^\epsilon$ lies in the $\epsilon$-neighbourhood of the union of the sides of $T_0^\epsilon$. Moreover, $\text{diam}(T_N^\epsilon)>\frac{\pi}{2}$. We first remark, that, while our subdivision is not defined for a triangle on the equator circle, one can nevertheless imagine what the subdivision would resemble in the case where the three vertices $A$, $B$ and $C$ are equidistributed on the equator, as there is still a unique geodesic between all midpoints in this case. It is easy to see that taking the midpoints of this triangle gives another triple of points of the equator which are also equidistributed. By continuity, if we consider a triangle $T_0^\epsilon$ whose vertices are all equidistributed on a latitude circle close to the equator and let its vertices approach $A$, $B$ and $C$ respectively, we see that its midpoints will also stay close to the midpoints of $ABC$. By induction, for any finite number $N$ of subdivisions and any $\epsilon>0$, choosing $T_0^\epsilon$ to lie on a latitude circle sufficiently close to the equator then guarantees that all edges of $T_N^\epsilon$ remain within the $\epsilon$-neighbourhood of the union of the sides of $T_0^\epsilon$. Because of this, it is clear that for $\epsilon$ sufficiently small, $\text{diam}(T_N^\epsilon) > \frac{\pi}{2}$.

\begin{figure}[H]
	\begin{center}     
    	\includegraphics[width=0.5\textwidth]{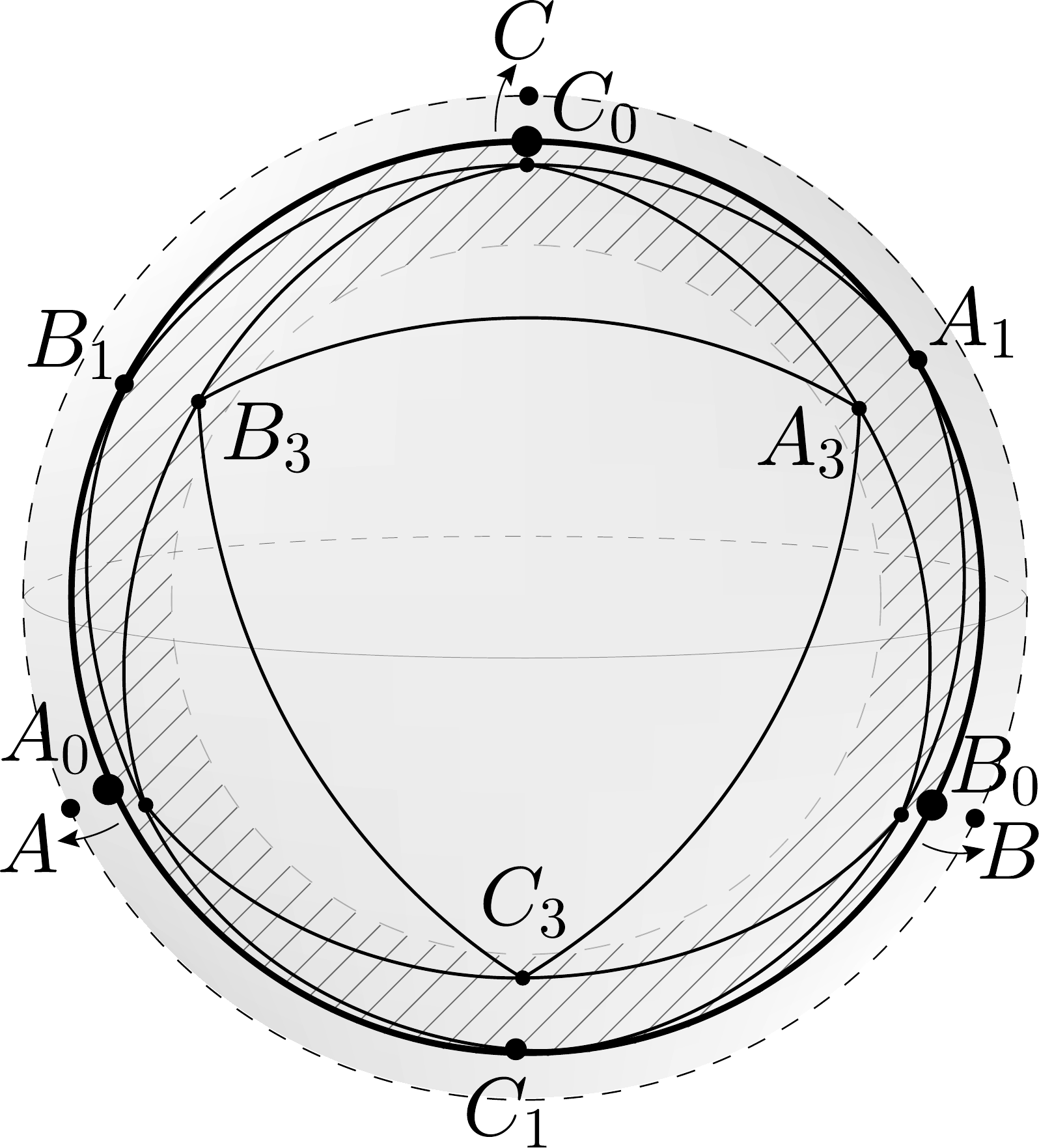}
        	\caption{: This figure illustrates Example 4, with the hatched annulus corresponding to the $\epsilon$-neighbourhood of the union of the sides of $T_0$ and $N=3$. Smaller values of $\epsilon$ and larger values of $N$ are achieved by choosing the vertices of $T_0$ closer to those of $ABC$.}
        	\label{fig:equi-sphere}
	\end{center}
\end{figure}
\vspace{0em}

\section{Stabilisation of Lengths}
\label{sec3}

Despite the various unusual behaviours showcased by our previous examples, we claim that as $n$ grows, the refining triangulations eventually ``stabilise'' to the limiting Euclidean case, in the sense of Theorem B and Theorem C. In this section, we establish our notation and focus on the behaviour of the edge-lengths of the subdivision. The core of this section is our proof of Theorem C, which gives a precise sense to the ``stabilisation of lengths'' observed in the medial triangle subdivision. Theorem C will also play a crucial part in our proofs of Proposition \ref{lemma:altitude} and Theorems A and B.

While the study of the behaviour of the heights in the subdivision is delayed to Section 4, the constructions used towards the proof of Theorem C rely heavily on taking orthogonal projections and measuring heights. We thus begin by making clear the meaning of ``height'' in the positive curvature setting. Indeed, while there is a unique orthogonal projection from any point to any line in the non-positive curvature setting, the situation is slightly more subtle in the spherical case. In the spherical setting, if we fix a point $p$ and a great circle $\mathcal C$, there are two possible cases, depending on whether $p$ is a pole of the sphere for $\mathcal C$ considered to be the equator circle. If it is not (left diagram of Fig. \ref{fig:notation-spherique}), there is a unique geodesic arc which realises the distance of $p$ to $\mathcal C$. This arc is what we refer to as the \textit{altitude} drawn from $p$ onto $\mathcal C$ and we call its length the \textit{height} of $p$ to $\mathcal C$. The point of intersection between this arc and $\mathcal C$ is referred to as the \textit{orthogonal projection} of $p$ onto $\mathcal C$. Note that, in this case, the height is always strictly less than $\frac{\pi}{2}$. If $p$ is a pole of the sphere for $\mathcal C$ considered to be the equator circle (right diagram of Fig. \ref{fig:notation-spherique}), the height of $p$ to $\mathcal C$ is defined to be equal to $\frac{\pi}{2}$, while both the altitude from $p$ to $\mathcal C$ and its orthogonal projection on $\mathcal C$ are undefined. Notice that in both cases, the height is the minimal distance from $p$ to any point on the line $\mathcal C$. Fortunately, all our proofs relying on altitudes and orthogonal projections will take place in a setting where the distance between any two points is strictly less than $\frac{\pi}{2}$, therefore ensuring the second case where $p$ is a pole with respect to the equator circle $\mathcal C$ cannot happen.

\begin{figure}[H]
	\begin{center}     
    	\includegraphics[width=0.65\textwidth]{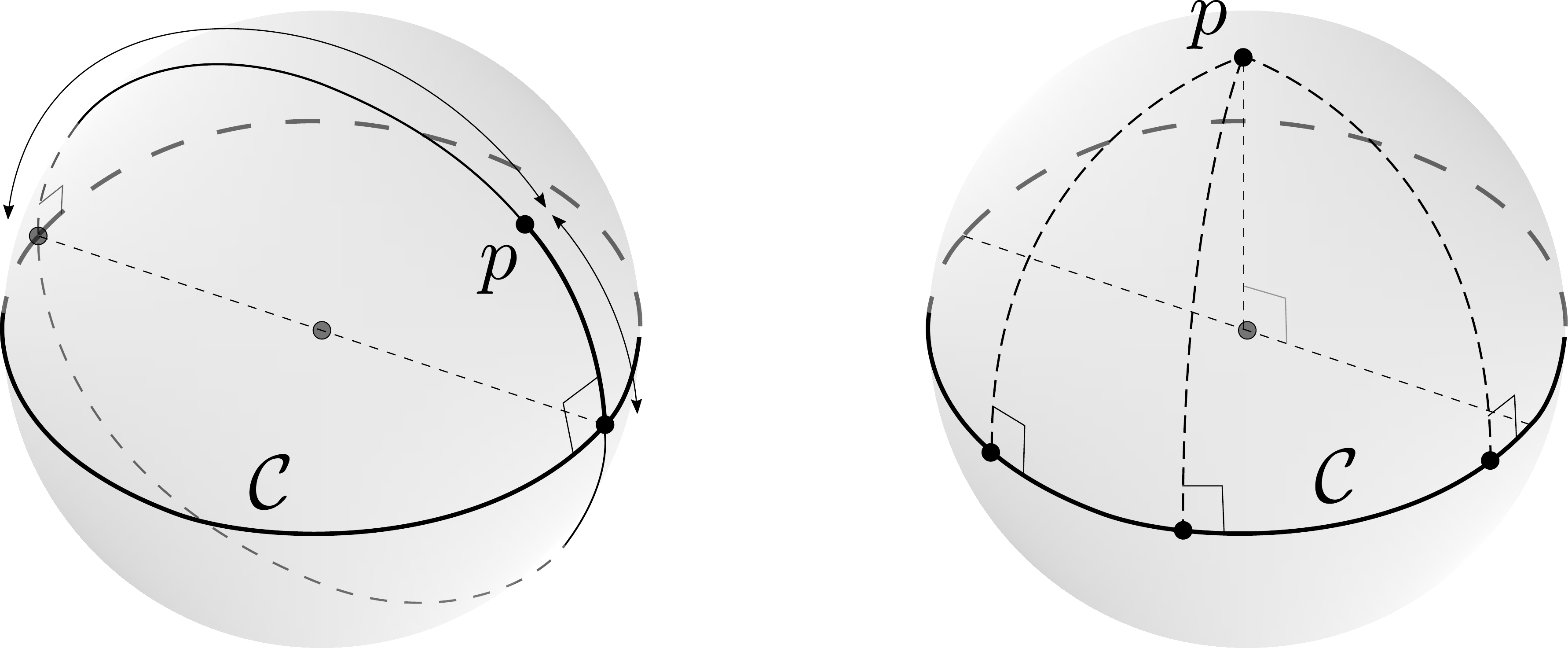}
        	\caption{: A point on the sphere has either two (left diagram) or uncountably many (right diagram) orthogonal projections to a line.}
        	\label{fig:notation-spherique}
	\end{center}
\end{figure}

We begin by stating the following lemma:

\begin{lemma}
\label{lemma:mid-edge}
For any sequence of nested triangles $t_0,t_1,\ldots$, and for all $n\in\mathbb N$, the following inequalities hold:
\[ a_{n+1} \leq \frac{a_n}{2}\tag{\textit{Hyperbolic geometry}}\]
\[a_{n+1} \geq\frac{a_n}{2}\tag{\textit{Spherical geometry}}\]
(and similarly for $b_{n+1} $ and $c_{n+1}$). The inequalities are strict when $a_{n+1}$ is obtained as the parallel side of $a_{n}$ in $t_{n+1}$.
\end{lemma}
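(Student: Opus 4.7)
I would split the argument along the two cases (a) and (b) of the medial subdivision. In case (b), the side $a_{n+1}$ is by definition the geodesic segment from a vertex of $t_n$ to the midpoint of $a_n$, hence $a_{n+1} = a_n / 2$ exactly, giving equality in both geometries. All the content of the lemma therefore lies in case (a), where $a_{n+1}$ is the parallel side of $a_n$ in the inner triangle $t_{n+1}$, namely the geodesic segment $MN$ joining the midpoints $M$ and $N$ of the other two sides $b_n$ and $c_n$.

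To treat case (a), my plan is to apply the hyperbolic/spherical law of cosines twice: once in $t_n$ (vertex $A$ opposite $a_n$, angle $\alpha_n$ at $A$, adjacent sides of lengths $b_n$ and $c_n$), and once in the smaller triangle $AMN$ (same angle $\alpha_n$ at $A$, adjacent sides of lengths $b_n / 2$ and $c_n / 2$). This expresses $\cosh a_n$ and $\cosh a_{n+1}$ (hyperbolic case) or $\cos a_n$ and $\cos a_{n+1}$ (spherical case) explicitly in terms of the half-side quantities and $\cos \alpha_n$. Using the double-angle identities $\cosh(2x) = 2 \cosh^2 x - 1$ or $\cos(2x) = 2 \cos^2 x - 1$, a short algebraic expansion collapses the difference $\cosh a_n - \cosh(2 a_{n+1})$ (resp.\ $\cos a_n - \cos(2 a_{n+1})$) to the compact form
\[
2 \sinh^2\!\tfrac{b_n}{2}\, \sinh^2\!\tfrac{c_n}{2}\, \sin^2 \alpha_n \qquad \bigl(\text{resp.\ } 2 \sin^2\!\tfrac{b_n}{2}\, \sin^2\!\tfrac{c_n}{2}\, \sin^2 \alpha_n\bigr),
\]
the key cancellation being $\cosh^2 - \sinh^2 = 1$ (resp.\ $\cos^2 + \sin^2 = 1$).

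In both geometries, the non-degeneracy of $t_n$ ($\alpha_n \in (0, \pi)$ and $b_n, c_n$ strictly positive, and in the spherical case strictly less than $\pi$) makes this difference strictly positive. In the hyperbolic setting, since $\cosh$ is strictly increasing on $[0, \infty)$, this immediately yields $a_n > 2 a_{n+1}$. In the spherical setting, $\cos$ is strictly decreasing on $[0, \pi]$, so the same positive sign translates to $a_n < 2 a_{n+1}$ whenever both lengths lie in $[0, \pi]$; the remaining possibility $2 a_{n+1} \geq \pi$ is immediate since $a_n < \pi$. Equality in either geometry forces $\alpha_n \in \{0, \pi\}$, which is ruled out, so the inequalities are strict in case (a).

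I do not anticipate any real obstacle here: the argument is a direct application of the law of cosines followed by a half-angle identity. The only minor subtlety is bookkeeping the opposite conventions of the two geometries (longer means larger $\cosh$ in hyperbolic space, but smaller $\cos$ on the sphere), which is precisely why the same algebraic identity produces inequalities in opposite directions.
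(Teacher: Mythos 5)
Your argument is correct, but it takes a genuinely different route from the paper. The paper disposes of the parallel-side case in one line by invoking comparison geometry: $\mathbb H^2$ is CAT(0) and $\mathbb E^2$ is CAT(1), and the CAT($\kappa$) inequality applied to midpoints of two sides immediately gives $a_{n+1}\leq a_n/2$ (resp.\ $\geq$); a second, isometry-based proof (translation along the midline axis, resp.\ rotation of the sphere) appears later as Remark 3.10. Your proof is instead a direct double application of the law of cosines, and the computation checks out: with $\cosh a_n=\cosh b_n\cosh c_n-\sinh b_n\sinh c_n\cos\alpha_n$ and $\cosh a_{n+1}=\cosh\tfrac{b_n}{2}\cosh\tfrac{c_n}{2}-\sinh\tfrac{b_n}{2}\sinh\tfrac{c_n}{2}\cos\alpha_n$, the double-angle identities do collapse the difference to
\[
\cosh a_n-\cosh(2a_{n+1})=2\sinh^2\tfrac{b_n}{2}\,\sinh^2\tfrac{c_n}{2}\,\sin^2\alpha_n>0,
\]
and likewise in the spherical case (your handling of the branch $2a_{n+1}\geq\pi$ is also fine, since sides of triangles in an open hemisphere are shorter than $\pi$). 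What your route buys is an elementary, self-contained proof with an \emph{explicit quantitative defect} measuring how far the inequality is from equality, which the CAT($\kappa$) citation does not provide; what the paper's route buys is brevity and conceptual clarity. One small bookkeeping slip: the correct dichotomy is not case (a) versus case (b) of the subdivision but ``$a_{n+1}$ contained in $a_n$'' versus ``$a_{n+1}$ is the parallel side.'' In case (b) with $t_{n+1}$ the outer triangle at the vertex opposite $a_n$, the side named $a_{n+1}$ is again the midline $FE$, not half of $a_n$; your law-of-cosines computation in the triangle $AMN$ is exactly this situation, so nothing is actually missing, but the sentence claiming equality throughout case (b) should be corrected.
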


\begin{proof}
The two cases where $a_{n+1}$ is contained in $a_n$ are trivial. The only case of interest is thus when $a_{n+1}$ is obtained as the parallel side of $a_{n}$ in $t_{n+1}$. In that case, it is a consequence of the observation that the hyperbolic plane (resp. the Euclidean plane) is a CAT(-1) space (resp. a CAT(0) space), and thus also CAT(0) space (resp. a CAT(1) space) (\cite[II.1.13]{BH}). The lemma follows directly from the CAT($\kappa$) inequality. 
\end{proof}
While this observation is a well known fact, a later construction of ours will provide an elegant alternative proof of Lemma \ref{lemma:mid-edge} later on in this section, see Remark \ref{newproof} in our proof of Theorem \ref{thm:lengths}. 

\begin{lemma}
\label{lemma:lengths}
In the spherical case, there exists an integer $N$ and a positive constant $C<2$ depending only on $t_0$, such that, for all $n>N$, we have $a_{n+1} \leq C \> \frac{a_n}{2}$. 
\end{lemma}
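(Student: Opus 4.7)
My plan is as follows. The case where $a_{n+1}$ is contained in $a_n$ is trivial ($a_{n+1}=a_n/2$, so any $C\geq 1$ works), so I focus on the substantive case in which $a_{n+1}$ is the parallel side of $a_n$ in $t_{n+1}$. Applying the spherical law of cosines to the sub-triangle formed by the vertex of $t_n$ opposite $a_n$ and the midpoints of the two adjacent sides gives
\[
\cos a_{n+1} = \cos(b_n/2)\cos(c_n/2) + \sin(b_n/2)\sin(c_n/2)\cos\alpha_n,
\]
which, combined with the spherical law of cosines for $a_n$ and the double-angle formula $\cos(2x)=1-2\sin^2 x$, yields after routine manipulation the clean identity
\[
\sin^2 a_{n+1} = \sin^2(a_n/2) + \sin^2\alpha_n \cdot \sin^2(b_n/2)\sin^2(c_n/2).
\]
This refines Lemma~\ref{lemma:mid-edge} in the spherical case by exhibiting the gap $\sin^2 a_{n+1}-\sin^2(a_n/2)$ explicitly.

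The key feature of this identity is that the correction term is quartic in the side lengths whereas the main term is only quadratic. A direct Taylor expansion together with the elementary bound $\sin^2\alpha_n\leq 1$ therefore shows: for every $C\in(1,2)$ there exists a threshold $\delta=\delta(C)>0$ such that $a_{n+1}\leq C\cdot a_n/2$ whenever all sides of $t_n$ are at most $\delta$. Concretely, one obtains the asymptotic $a_{n+1}/(a_n/2)\sim\sqrt{1+h_{a_n}^2/4}$, where $h_{a_n}$ denotes the (Euclidean) height onto $a_n$; this quantity tends to $1$ as the diameter $d_n:=\max(a_n,b_n,c_n)$ tends to $0$, and in any case remains strictly less than $2$ for sufficiently small triangles.

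It thus suffices to produce some $N=N(t_0)$ such that $d_n\leq\delta$ for all $n>N$. The nested triangles $t_0\supset t_1\supset\cdots$ have strictly decreasing diameter (by the strict spherical triangle inequality applied to each of the four sub-triangles of $t_n$), so $d_n$ decreases to a limit $d_\infty\geq 0$. A Hausdorff-limit argument on the intersection $t_\infty:=\bigcap_n t_n$, combined with the observation that the chosen sub-triangle of $t_n$ must, in the limit, still contain $t_\infty$, forces $t_\infty$ to be fixed by the subdivision in the limit and hence to have empty interior (in particular zero spherical area). Ruling out that $t_\infty$ is a non-trivial geodesic arc then yields $d_\infty=0$, and one can take $N$ to be the first index with $d_N\leq\delta$.

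The hardest step will be this last one, i.e., ensuring $t_\infty$ is not a degenerate geodesic arc of positive length. I expect this to be handled by exploiting the identity above together with its symmetric counterparts for $b_{n+1}$ and $c_{n+1}$: if a subsequence of the $t_n$'s had aspect ratio blowing up (becoming arbitrarily thin), these recursive relations would force the next level of sub-triangles to have bounded aspect ratio, contradicting the supposed persistence of the degeneration. Making this quantitative is the technical heart of the proof.
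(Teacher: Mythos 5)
Your overall architecture matches the paper's: a local trigonometric estimate valid once the triangle is small, combined with a compactness argument showing the triangles do eventually become small (this is the paper's Lemma \ref{claim1}). Your identity $\sin^2 a_{n+1}=\sin^2(a_n/2)+\sin^2\alpha_n\,\sin^2(b_n/2)\sin^2(c_n/2)$ is correct (it checks out against the two laws of cosines) and is a clean alternative to the paper's route, which instead projects $B$ and $C$ orthogonally onto the line through the two midpoints to build a Saccheri quadrilateral and derives $\sin(a_n/2)=\sin a_{n+1}\cdot\cos|CC'|$, then shows the height $|CC'|$ tends to $0$; the two computations carry essentially the same information. Your conversion of the sine inequality into the length inequality at the cost of inflating $C$ slightly is also exactly how the paper concludes. (Minor point: your asymptotic $a_{n+1}/(a_n/2)\sim\sqrt{1+h_{a_n}^2/4}$ is really an asymptotic for $\sin a_{n+1}/\sin(a_n/2)$; the ratio of the lengths themselves tends to $1$. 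This does not affect the bound you extract from it.)

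The genuine gap is exactly where you predicted it: ruling out that $t_\infty=\bigcap_n t_n$ is a geodesic arc of positive length. The mechanism you propose --- that arbitrarily thin triangles would force the next generation to have bounded aspect ratio --- is false: for small triangles the subdivision is a perturbation of the Euclidean one, whose four children are similar to the parent, so thinness persists indefinitely and no such self-correction occurs. Worse, the scenario you must exclude is one in which the $t_n$ Hausdorff-converge to a fixed arc of length $d_\infty>0$, so those triangles are not small, and your local identity (whose usefulness rests precisely on all sides being small) gives no purchase there. The paper handles this case (Case 2 in the proof of Lemma \ref{claim1}) not by a shape estimate but by pushing the ``contained in one of its own children'' contradiction through the degenerate limit: it defines the degenerate medial subdivision of the limiting segment $AB$ (midpoint $M$, pieces $AM$ and $MB$), observes that each piece omits a $d/2$-neighbourhood of $A$ or of $B$, and uses continuity of geodesics and midpoints in their endpoints to conclude that $t_{n+1}$, being close to one of these pieces, must omit a neighbourhood of $A$ or of $B$ --- impossible since both endpoints lie in every $t_m$. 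You would need to supply an argument of this kind (or make your Hausdorff-continuity step work at the degenerate limit) to close the proof. A further small issue: your claim that the diameters $d_n$ decrease strictly ``by the strict spherical triangle inequality'' is unsupported --- the parallel side of a corner child is bounded \emph{below}, not above, by Lemma \ref{lemma:mid-edge}, and Example 3 of the paper shows it can be far longer than half the original side --- but only non-increase, which follows from nestedness, is actually needed for your argument.
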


We now note that Lemma \ref{lemma:lengths} clearly holds in the hyperbolic case because of Lemma \ref{lemma:mid-edge}, choosing $C=1$ and $N=0$ in the statement. But we also wish to establish an upper bound in the spherical case. For that purpose, we will think of a geodesic triangle $T$ on the sphere as a Jordan curve and define its \textit{interior} as the connected component of $\mathbb S^2-T$ that is contained in the open hemisphere containing $T$. 

\begin{lemma}
\label{claim0}
Given a geodesic $c:[0,l]\rightarrow \mathbb S^2$ joining two points $u=c(0)$ and $v=c(l)$ on a geodesic triangle $T$, the restriction of $c$ to the open interval $(0,l)$ lies in the interior of $T$.
\end{lemma}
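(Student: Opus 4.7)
My plan is to reduce the claim to its trivial Euclidean counterpart via the gnomonic (central) projection of the open hemisphere containing $T$ onto a Euclidean plane. Let $H$ denote the open hemisphere with $T \subset H$ (supplied by the standing hypothesis in the positive-curvature setup), and let $\pi\colon H \to \mathbb{R}^2$ be the central projection from the centre of $\mathbb{S}^2$ onto the affine plane tangent to the sphere at the centre of $H$. The key property of $\pi$ is that it is a homeomorphism that sends every great-circle arc contained in $H$ to a straight line segment: great circles are intersections of $\mathbb{S}^2$ with planes through the origin, and such a plane cuts the tangent plane in a line. Consequently $T$ is mapped to a genuine Euclidean triangle $T'$, with $\mathrm{int}(T)$ mapping bijectively onto $\mathrm{int}(T')$ and each edge of $T$ mapping onto the corresponding edge of $T'$.

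First I would verify that the geodesic $c$ itself lies in $H$, so that $\pi\circ c$ is well-defined. This rests on the geodesic convexity of the open hemisphere: the signed distance to the great circle bounding $H$ restricts to a sinusoid of period $2\pi$ on any great circle, hence stays positive on the entire minimal sub-arc between two points where it is positive. Since $u,v \in T \subset H$ and the spherical distance between them is strictly less than $\pi$ (no two points of an open hemisphere can be antipodal), the unique minimizing geodesic $c$ remains in $H$.

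With $c$ living in $H$, its image $\pi(c)$ is a Euclidean line segment from $u' := \pi(u)$ to $v' := \pi(v)$, both lying on $\partial T'$. The claim then reduces to the elementary Euclidean fact that the open segment between two boundary points of a triangle which lie on two distinct edges is contained in the interior of the triangle; pulling back via $\pi^{-1}$, and using that $\pi$ preserves the interior/boundary stratification, one concludes $c((0,l)) \subset \mathrm{int}(T)$.

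The only real bookkeeping subtlety, and what I expect to be the sole obstacle, is the tacit non-degeneracy assumption that $u$ and $v$ do not lie on the same edge of $T$; in that degenerate case the unique minimizing geodesic between them is simply a subsegment of the common edge and the conclusion fails. In the downstream use in Lemma~\ref{lemma:lengths}, where $u$ and $v$ arise as midpoints of two distinct edges, this case never occurs, so I would either state the non-degeneracy hypothesis explicitly or read the conclusion as "$c((0,l)) \subset \mathrm{int}(T)$ whenever $u,v$ lie on distinct edges of $T$".
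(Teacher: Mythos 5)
Your argument is correct, and at bottom it runs on the same engine as the paper's: both exploit the fact that great circles are intersections of $\mathbb S^2$ with planes through the origin, so that the incidence question linearises in $\mathbb E^3$. The difference is packaging. The paper stays in the cone: it writes each point of the arc as $\alpha P+\beta Q+\gamma R$ with $\alpha,\beta,\gamma\ge 0$ and observes that every point other than the endpoints has all three coefficients strictly positive, hence lies in the open positive cone over $P,Q,R$; this gives containment in $T$ and in its interior simultaneously, with no need to check separately that $c$ stays in the hemisphere or that any projection respects the boundary/interior stratification. You instead realise the same linearisation geometrically as the gnomonic projection and outsource the conclusion to the elementary Euclidean statement; this is perfectly valid, at the cost of the (true but not entirely free) bookkeeping that $\pi$ carries the paper's notion of interior --- the component of $\mathbb S^2-T$ contained in the open hemisphere --- onto the bounded component of the complement of the Euclidean triangle. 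One point genuinely in your favour: you flag that the conclusion fails when $u$ and $v$ lie on a common edge. The paper's proof silently restricts to the case needed in Lemma~\ref{lemma:lengths} (it is phrased only for midpoints of two distinct edges, so the general wording of the statement outruns its proof), and your explicit non-degeneracy caveat is a correction to the statement rather than a gap in your argument.
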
 

\begin{proof} To fix the notation, $PQR$ will denote a geodesic triangle and $X,Y,Z$ the midpoints of its three edges $QR$, $RP$ and $PQ$. Note that we have that $d_{\mathbb S^2(X,Y)}<\pi$ and likewise for the other two pairs. There is then a unique minimal arc joining joining $X$ and $Y$ and this arc is the intersection of $\mathbb S^2$ with the positive cone in $\mathbb E^3$ spanned by $X$ and $Y$, seen as unit vectors in $\mathbb E^3$. Thus all the points of this geodesic arc are of the form $x X + y Y$, with $x,y \geq 0$ and $x+y > 1$. Since $X$ is the midpoint of $QR$, it can be expressed as $\lambda (Q+R)$, with $\lambda > \frac{1}{2}$. Likewise $Y$ can be expressed as $\mu (P+R)$, $\mu > \frac{1}{2}$. This shows that each point on the geodesic segment joining $X$ and $Y$ can be written as a sum $x\lambda (P+Q)+y \mu (Q+R)=\alpha P + \beta Q + \gamma R$ with $\alpha+\beta+\gamma> 1$ and $\alpha, \beta, \gamma \geq 0$. The entire geodesic segment thus lies in the intersection between the positive cone in $\mathbb E^3$ spanned by $P, Q$ and $R$ and $\mathbb S^2$ and is thus contained within the triangle $PQR$. For any points of the geodesic distinct from $X$ and $Y$, we have $\alpha,\beta,\gamma >0$, which shows that these points lie in the interior of the triangle $PQR$.
\end{proof}

We shall now need the following important lemma to prove Lemma \ref{lemma:lengths}:

\begin{lemma}
\label{claim1}
For all $\epsilon > 0$, there exists $N\in \mathbb N$ such that all the edge-lengths of $t_n$, for $n>N$, are smaller than $\epsilon$.
\end{lemma}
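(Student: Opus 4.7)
The plan is to define $L_n := \max(a_n, b_n, c_n)$, show that $(L_n)$ is non-increasing, and argue by contradiction that $L_n \to 0$. The hyperbolic case follows immediately from Lemma \ref{lemma:mid-edge}, so I focus on the spherical case.

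For monotonicity, every side of $t_{n+1}$ is either half of a side of $t_n$ (length $\leq L_n/2$) or a \emph{medial} geodesic joining the midpoints $B'$ and $C'$ of two sides of $t_n$ that share a common vertex $A$. In the latter case, the spherical triangle inequality yields
\[
d(B', C') \leq d(B', A) + d(A, C') = \tfrac{b_n}{2} + \tfrac{c_n}{2} \leq L_n,
\]
so $L_{n+1} \leq L_n$, and hence $L_n \to L^* \geq 0$.

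Suppose for contradiction that $L^* > 0$. Since $t_n \subset t_0$ and $t_0$ is compactly contained in the open hemisphere $H$, we extract a subsequence along which the vertex triples of $t_{n_k}$ converge to those of a limit triangle $t^* \subset H$ with longest side $L^*$. By a pigeonhole argument on the four possible sub-triangle types, we may further assume $t_{n_k+1}$ is always of the same type, so it converges to a specific sub-triangle $s^* \subseteq t^*$ whose longest side has length $\lim L_{n_k+1} = L^*$. Each side of $s^*$ is either half of a side of $t^*$ (length $\leq L^*/2 < L^*$) or a medial edge of $t^*$; the longest side of $s^*$ must therefore be a medial edge $a'$ of $t^*$ joining midpoints $B'$ and $C'$ of the two sides at some vertex $A^*$. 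Writing $b^*, c^*$ for those sides, the triangle inequality in $t^*$ gives
\[
L^* = a' \leq \tfrac{b^*}{2} + \tfrac{c^*}{2} \leq L^*,
\]
forcing $b^* = c^* = L^*$ together with $A^*$ lying on the minimal geodesic from $B'$ to $C'$. The latter places $A^*$ on the great circle through $B'$ and $C'$; but since $B'$ lies on the edge $A^*C^*$ and $C'$ lies on the edge $A^*B^*$, this great circle must coincide both with the great circle through $A^*$ and $C^*$ and with that through $A^*$ and $B^*$, forcing $A^*, B^*, C^*$ to lie on a common great circle. Since all three points are in the open hemisphere they share an arc of length $< \pi$, so one lies between the other two: say $A^*$ is between $B^*$ and $C^*$, giving $a^* = b^* + c^* = 2L^*$ and contradicting $a^* \leq L^*$. (The other orderings yield analogous contradictions.)

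The main obstacle is exactly this final rigidity analysis, since the medial subdivision has a genuine fixed configuration on the sphere---the equatorial equilateral triangle of Example 4. The open-hemisphere assumption on $t_0$, transported to $t^*$ via the compact containment $t_n \subset t_0$, is precisely what rules this out. Degenerate limits with two coinciding vertices of $t^*$ can be handled directly by observing that in those cases all medial edges have length at most $L^*/2$, again contradicting the fact that $s^*$ has a side of length $L^*$.
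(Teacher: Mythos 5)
Your proof is correct, and it takes a genuinely different route from the paper's. The paper also argues by compactness, but it extracts convergent subsequences of the three vertex sequences, assumes the limit points $A$, $B$, $C$ are not all equal, and derives its contradiction from Lemma \ref{claim0} together with continuity of geodesics: the inner medial triangle of the limit configuration is separated from the limit vertices, so some later nested triangle would have to exclude one of $A$, $B$, $C$, each of which lies in every nested triangle; a separate and somewhat delicate Case~2 treats the degenerate limit where two of the three points coincide via an ad hoc ``medial subdivision'' of a segment. You instead isolate the monotone quantity $L_n=\max(a_n,b_n,c_n)$ --- monotone by the triangle inequality applied at the vertex shared by the two bisected sides --- and locate the contradiction in the \emph{equality case} of that same inequality: a positive limit $L^*$ forces the limit triangle to have a medial edge of full length $L^*$, hence $b^*=c^*=L^*$ with the apex on the minimal geodesic between the two midpoints, hence three collinear vertices, which is exactly the equatorial configuration of Example~4 and is excluded by the open-hemisphere hypothesis. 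Your version buys an explicit monotone quantity (so convergence along a subsequence automatically upgrades to convergence of the full sequence, a point the paper leaves implicit via the monotone diameter), a uniform dispatch of the degenerate limit (there all medial edges of $t^*$ have length at most $L^*/2$, so no side of $s^*$ can reach $L^*$), and a sharper identification of the obstruction; the cost is the collinearity bookkeeping (uniqueness of great circles through distinct non-antipodal points, betweenness on the arc of that circle inside the hemisphere), which you do carry out correctly. Both arguments rely on Lemma \ref{claim0} (yours to justify $t_n\subset t_0$ and hence the compactness extraction) and on continuity of midpoints in the open hemisphere in essentially the same way.
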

\indent We point out here that this is still a weaker statement than that of Lemma \ref{lemma:lengths}, which tells us that any sequence of edge lengths $(a_n)_{n\in \mathbb N}$ not only converges to $0$ but is also bounded above by a geometric sequence. \\
\begin{proof}
Given a sequence of nested triangles $t_0, t_1, \ldots$, we define the sequences of points $(A_n)_{n\in\mathbb N}$, $(B_n)_{n\in\mathbb N}$ and $(C_n)_{n\in\mathbb N}$ consisting, for each $n$, of the vertices of $t_n$ incident to the angles $\alpha_n$, $\beta_n$ and $\gamma_n$ respectively. In our setting, $t_0$ is a closed compact subset of the open hemisphere, thus each of the three sequences $(A_n)$, $(B_n)$ and $(C_n)$ have subsequences $(A_{n_k})$, $(B_{n_k})$ and $(C_{n_k})$ converging to $A$, $B$ and $C$ respectively, with all three points lying in $t_0$ (by Lemma \ref{claim0}).

\begin{figure}[H]
	\begin{center}     
    	\includegraphics[width=0.7\textwidth]{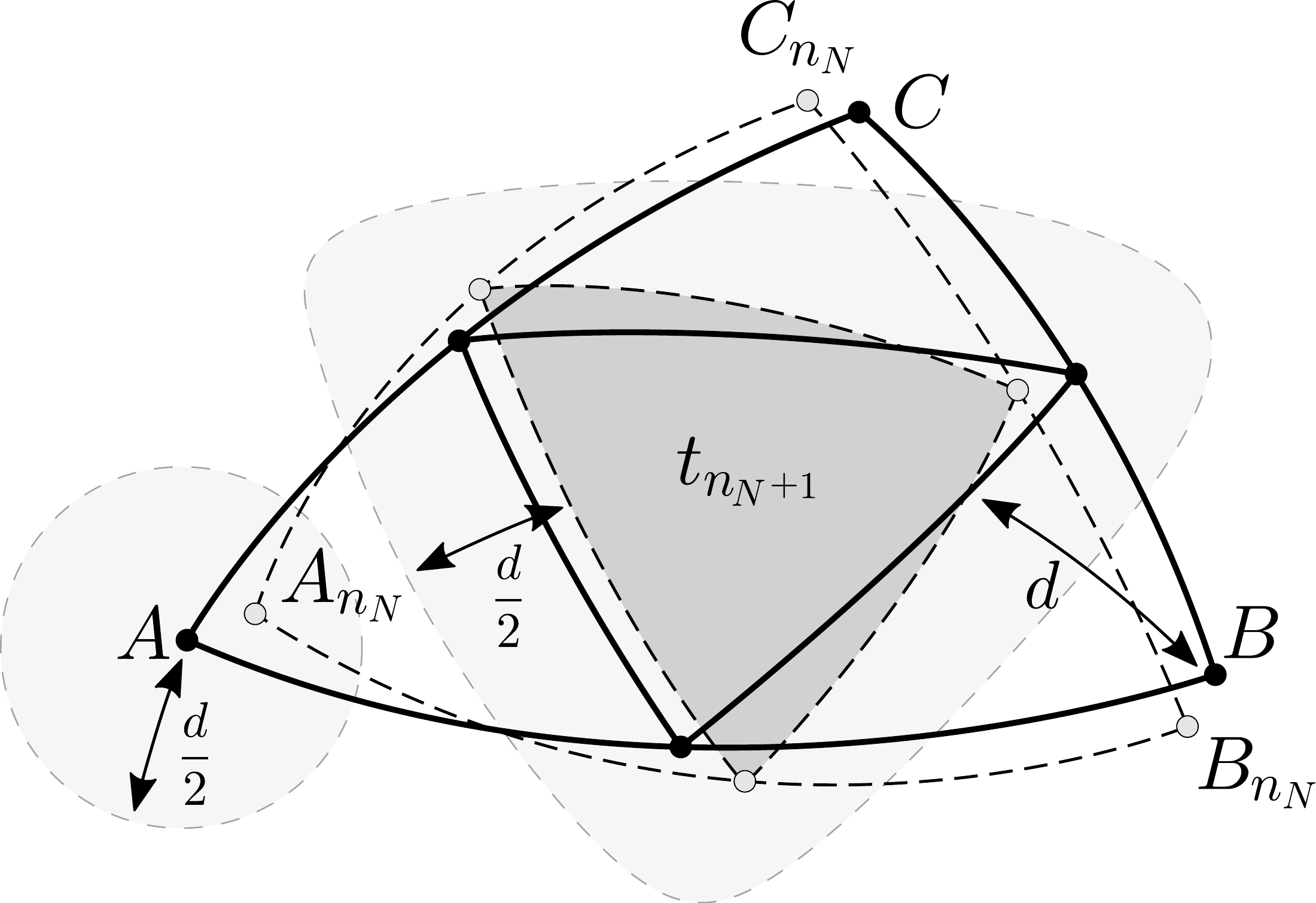}
        	\caption{}
        	\label{fig:ABC}
	\end{center}
\end{figure}

Suppose now by contradiction that the sequences $(A_{n_k})$, $(B_{n_k})$ and $(C_{n_k})$ do not converge to the same point. There are two possible cases.

\textit{Case 1:} $A\neq B \neq C$. Lemma \ref{claim0} guarantees that the vertices of $T=ABC$ and the innermost triangle $T'$ of $T$ in the medial triangle subdivision form two disjoint closed sets. Denote then by $d$ the minimum of the three distances between $T'$ and each of the points $A$, $B$ and $C$. Then the $d\slash 2$-neighbourhoods of $A$ and $T'$ are disjoint (likewise for $B$ and $C$). Since geodesics between any two points of the open hemisphere are unique and continuous with respect to their endpoints, we have that there exists $N$ such that the $d\slash 2$-neighbourhood of each edge in the medial triangle subdivision of $T$ contains the corresponding edge of $A_{n_N} B_{n_N} C_{n_N}$(see Fig. \ref{fig:ABC}).  Assuming that $t_{n_{N}+1}$ is the innermost triangle of $t_{n_N}$, we can then guarantee that $A$ and $t_{n_{(N+1)}}$ are disjoint. But this is impossible as $A$ lies in $t_{n_{(N+1)}}$. If instead $t_{n_{N}+1}$ were not the innermost triangle but (for example) the triangle $B_{{n_N}+1} A_{n_N} C_{{n_N}+1}$ (the bottom-left triangle on Fig. 10), then we could now guarantee that $B$ (or $C$) and $t_{n_{(N+1)}}$ are disjoint. This is again impossible as $B$ (and $C$) lies in $t_{n_{(N+1)}}$. The other two cases are dealt with in the exact same fashion. 

\begin{figure}[H]
	\begin{center}     
    	\includegraphics[width=0.85\textwidth]{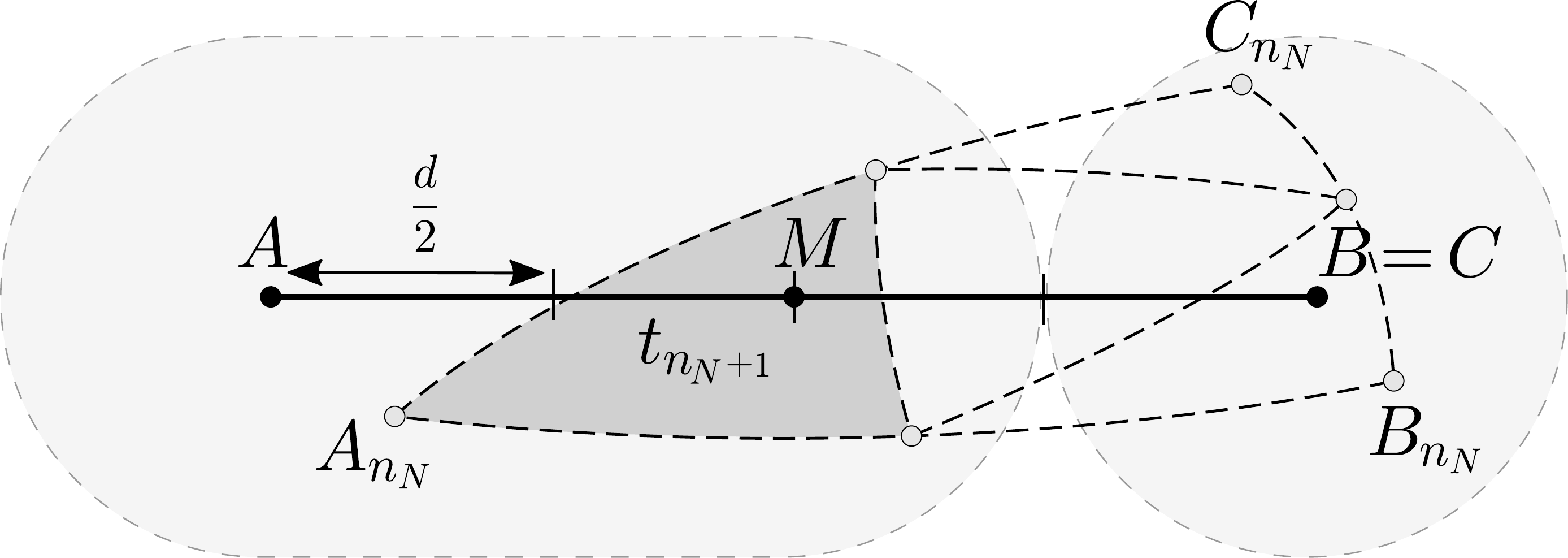}
        	\caption{}
        	\label{fig:ABC-2}
	\end{center}
\end{figure}

\textit{Case 2:} $A\neq B$ and $B=C$ (the other two possible cases are symmetric up to a relabelling of the vertices). We proceed with a similar argument as in the first case, replacing $T$ by the geodesic segment joining $A$ and $B$, $T'$ by the geodesic segment joining the midpoint $M$ of $AB$ to $B=C$ and letting $d=d_{\mathbb S^2}(A,B)\slash 2$. In this degenerate case, we consider the ``medial triangle subdivision'' of $T$ where the midpoints of $AB$ and $AC$ coincide with $M$, the midpoint of $BC$ coincides with $B$, and the tree edges joining the three midpoints of $BC$, $CA$ and $AB$ are the point $M$ and the edge $MB$ (counted twice).  We then similarly obtain an integer $N$ such that the $d\slash 2$-neighbourhood of each edge in the ``medial triangle subdivision'' of $T$ contains the corresponding edge of $A_{n_N} B_{n_N} C_{n_N}$. For the three possible choices of $t_{n_{N}+1}$ that do not contain $A_{n_N}$, we can consider the $d\slash 2$-neighbourhoods of $T'$ and $A$ to guarantee that $A$ and $t_{n_{(N+1)}}$ are disjoint. But this is impossible since $A$ lies in $t_{n_{(N+1)}}$. If instead $t_{n_{N}+1}$ was the triangle containing $A_{n_N}$ (see Fig. \ref{fig:ABC-2}), then we could now consider the $d\slash 2$-neighbourhoods of $AM$ and $B=C$ to guarantee that $B=C$ and $t_{n_{(N+1)}}$ are disjoint. This is again impossible since $B=C$ lie in $t_{n_{(N+1)}}$. This concludes the proof of Case 2.
\end{proof}

\begin{rem}
\label{small}
Owing to Lemmas \ref{claim0} and \ref{claim1}, we now know that, in the spherical setting, we can assume our triangles to be small enough to lie inside an open ball of radius $\frac{\pi}{4}$, so that the distance between any two points is strictly less than $\frac{\pi}{2}$. This guarantees that the height from any point lying on such a triangle to any line (great circle) intersecting the triangle must be strictly less than $\frac{\pi}{2}$. Indeed, as we noted before, the height from a point to a line is the minimal distance from this point to any point on the line. The second case of Figure \ref{fig:notation-spherique} will thus be safely averted from there on.  
\end{rem}

We now introduce some notation. For convenience, we write $a_n$ (resp. $b_n, c_n$) as $BC$ (resp. $CA$, $AB$) and the midpoints of $BC$, $CA$ and $AB$ by $D,E$ and $F$ (see Fig. \ref{fig:construction-both}). In the following proofs, we let $A', B',C',D'$ be the orthogonal projections of $A,B,C,D$ on the geodesic line (i.e. the great circle in the spherical setting) $FE$. We also consider the orthogonal projections $F'$ and $E'$ of $F$ and $E$ on the line $BC$. Note that these are all well-defined in light of Remark \ref{small}.

\begin{figure}[H]
	\begin{center}     
    	\includegraphics[width=\textwidth]{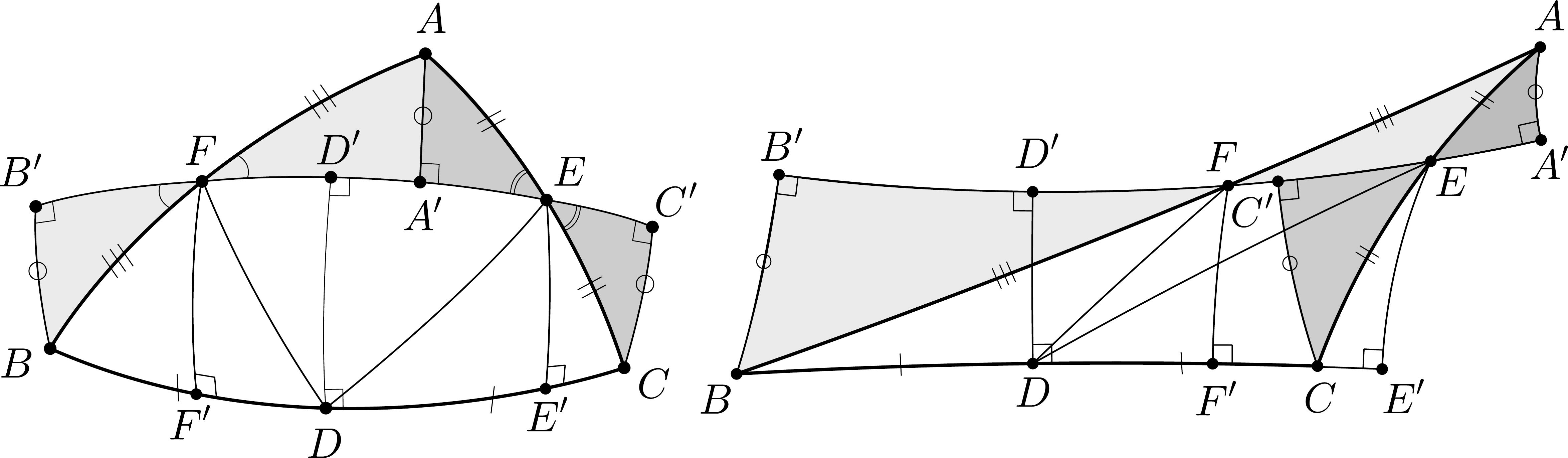}
        	\caption{: The construction for an acute spherical triangle (left) and an obtuse hyperbolic triangle (right).}
        	\label{fig:construction-both}
	\end{center}
\end{figure}

In the following discussion, we define a \textit{quadrilateral} $XYY'X'$ as the union of the geodesic segments $XY$, $YY'$, $Y'X'$ and $X'X$, provided that every pairwise intersection of the four interiors of these segments is empty. We remind the reader that a quadrilateral $XYY'X'$ for which the oriented angles $\angle Y'YX$ and $\angle YXX'$ are right angles (for the two possible orientations of the hyperbolic plane or the sphere) and the lengths of its sides $XX'$ and $YY'$ are equal is called a \textit{Saccheri quadrilateral} with \textit{base} $XY$ and \textit{summit\textit} $X'Y'$ (sometimes called a Saccheri isosceles birectangle). Saccheri quadrilaterals have a unique line of symmetry cutting both their base and summit sides perpendicularly through their midpoints (see for example \cite[\S 21]{Martin}). Likewise, a quadrilateral $XYY'X'$ in which the angles at $X,Y$ and $X'$ are right is called a \textit{Lambert} quadrilateral (sometimes called Lambert trirectangle) with \textit{apex} at $Y'$.

We first give a short proof of a trigonometric identity in Lambert quadrilaterals which will prove to be very useful in our proof of the main theorem and which we shall use throughout this article.

\begin{lemma}
\label{lemma:Lambert-Trigo}
In a hyperbolic Lambert quadrilateral $XYY'X'$ with apex $Y'$, the following identity holds:

\[\sinh |X'Y'| = \sinh |XY| \cdot \cosh |YY'| \]

The identity in spherical geometry is obtained by replacing hyperbolic trigonometric functions by spherical trigonometric functions:
\[\sin |X'Y'| = \sin |XY| \cdot \cos |YY'| \]
\end{lemma}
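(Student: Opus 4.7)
The plan is to split the Lambert quadrilateral into two right triangles by the diagonal $XY'$ and apply the standard right-triangle identities in $\mathbb H^2$ (resp. $\mathbb S^2$). Write $\phi = \angle YXY'$ and $\psi = \angle Y'XX'$; since the angle of the quadrilateral at $X$ is right, these two angles are complementary, i.e. $\phi + \psi = \pi/2$, and in particular $\cos \phi = \sin \psi$. The two triangles $\triangle XYY'$ (right-angled at $Y$) and $\triangle XX'Y'$ (right-angled at $X'$) share the hypotenuse $XY'$, so all trigonometric identities will be expressed relative to $|XY'|$ and then eliminated.

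In the hyperbolic case, I would recall two standard identities for a right triangle with right angle opposite the hypotenuse $c$, legs $a,b$, and angle $A$ opposite $a$: the hyperbolic Pythagorean theorem $\cosh c = \cosh a \cdot \cosh b$, the sine rule $\sin A = \sinh a / \sinh c$, and the cosine-of-angle rule $\cos A = \tanh b / \tanh c$. Applying the cosine-of-angle rule in $\triangle XYY'$ (with $\phi$ at vertex $X$, adjacent leg $XY$, hypotenuse $XY'$) gives
\[ \cos \phi = \frac{\tanh |XY|}{\tanh |XY'|}, \]
while applying the sine rule in $\triangle XX'Y'$ (with $\psi$ at $X$, opposite leg $X'Y'$) gives
\[ \sin \psi = \frac{\sinh |X'Y'|}{\sinh |XY'|}. \]
Equating these via $\sin\psi = \cos\phi$ yields $\sinh|X'Y'| = \sinh|XY|\cdot\sinh|XY'|\cosh|XY'|/(\cosh|XY|\sinh|XY'|) = \sinh|XY|\cdot\cosh|XY'|/\cosh|XY|$. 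Finally, the hyperbolic Pythagorean theorem in $\triangle XYY'$ gives $\cosh|XY'| = \cosh|XY|\cdot\cosh|YY'|$, and substituting cancels $\cosh|XY|$ to leave exactly $\sinh|X'Y'| = \sinh|XY|\cdot\cosh|YY'|$, as claimed.

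For the spherical version, I would observe that the formal structure of the computation is identical: the spherical right triangle satisfies the same trio of identities with $\sinh,\cosh,\tanh$ replaced by $\sin,\cos,\tan$ (spherical Pythagorean theorem $\cos c = \cos a \cos b$, spherical sine rule $\sin A = \sin a/\sin c$, and $\cos A = \tan b / \tan c$). Repeating the derivation verbatim yields $\sin|X'Y'| = \sin|XY|\cdot\cos|YY'|$. One small thing to note is that, in the spherical setting, Remark \ref{small} ensures all the side-lengths involved are strictly less than $\pi/2$, so each of the sines, cosines and tangents above is positive and the right triangles are non-degenerate, which is exactly what is needed to invoke these identities without sign ambiguities.

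There is no real obstacle here beyond bookkeeping — the proof is essentially two invocations of right-triangle trigonometry plus the complementarity $\phi+\psi = \pi/2$. The only thing to be slightly careful with is correctly matching each angle to its opposite/adjacent leg across the two triangles, and confirming that the diagonal $XY'$ indeed lies inside the quadrilateral (which, in the spherical case, follows from Remark \ref{small} together with Lemma \ref{claim0}).
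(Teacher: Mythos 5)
Your proof is correct and follows essentially the same route as the paper's: both split the Lambert quadrilateral along the diagonal $XY'$ into the right triangles $XYY'$ and $XX'Y'$ and exploit the complementarity $\cos\phi=\sin\psi$ at the vertex $X$. The only (cosmetic) difference is that you eliminate the hypotenuse via the cosine-of-angle rule $\cos\phi=\tanh|XY|/\tanh|XY'|$ together with the Pythagorean theorem, where the paper instead uses the dual law of cosines $\cosh|YY'|=\cos\phi/\sin\alpha$ combined with the sine rule.
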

\vspace{-1em}
\begin{figure}[H]
	\begin{center}     
    	\includegraphics[width=0.4\textwidth]{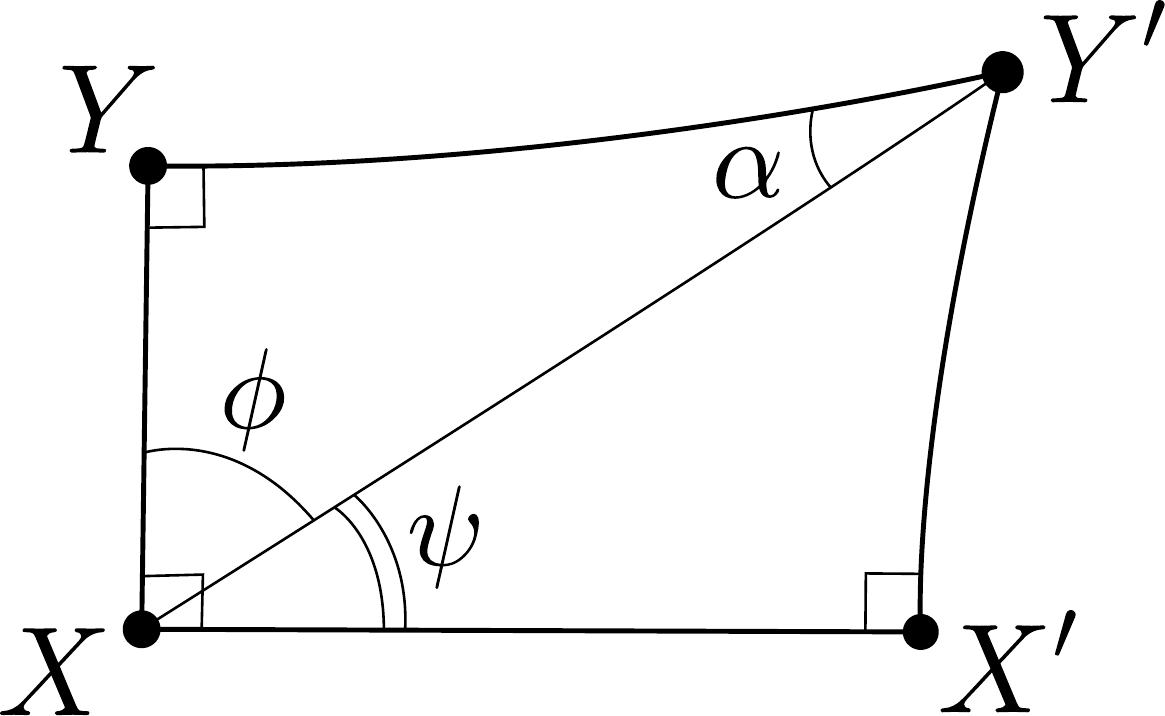}
        	\caption{}
        	\label{fig:lambert-proof}
	\end{center}
\end{figure}

\begin{proof}
 From the dual hyperbolic law of cosines in the right triangle $XY'Y$ (\cite[2.4.9]{WT}), we obtain $\cosh |YY'|=\frac{\cos \phi}{\sin \alpha}$. Morevoer, from the hyperbolic law of sines we obtain that $\frac{\sin \alpha}{\sinh |XY| }=\frac{1}{\sinh |XY'|}$. Combining the two and using the fact that $\cos\phi=\sin\psi$, we get: $\sinh |XY| \cosh |YY'| = \sin \psi \sinh |XY'|$. Using the hyperbolic law of sines a second time in the right triangle $XX'Y'$, we obtain $\frac{\sin \psi}{\sinh |X'Y'|}=\frac{1}{\sinh |XY'|}$. Substituting for $\sin \psi$ using this identity, we reach the desired equality. 
\end{proof}
We now get back to our proof of Lemma \ref{lemma:lengths} and start by establishing the following fact:

\begin{lemma}
\label{lemma:saccheri-both}
There exists $N\in\mathbb N$ such that, for all $n>N$, the quadrilateral $C'B'BC$ is a Saccheri quadrilateral with base $B'C'$ of length $2 |FE|$, symmetry line $DD'$ and base and summit midpoints $D'$ and $D$. 
\end{lemma}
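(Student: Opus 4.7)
The plan is to deduce the Saccheri structure of $C'B'BC$ from the equidistance $|BB'|=|CC'|$, and then to identify the symmetry line with $DD'$ via the uniqueness of such a line. First, I would invoke Lemma \ref{claim1} to choose $N$ large enough that, for all $n>N$, the triangle $t_n$ is small enough that every orthogonal projection onto line $FE$ used below is well defined (see Remark \ref{small}) and the four points $C,B,B',C'$ bound a non-self-intersecting quadrilateral. Let $A'$ denote the orthogonal projection of $A$ onto line $FE$.

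The crux of the argument is to show that $A$, $B$ and $C$ are equidistant from line $FE$. For $A$ and $B$, I would consider the $\pi$-rotation around $F$, which is an isometry in both hyperbolic and spherical geometry. Since $F$ is the midpoint of $AB$, this rotation sends $A$ to $B$; it also fixes the line $FE$ setwise, because any geodesic through $F$ is preserved (with reversed orientation) under a $\pi$-rotation about $F$. Consequently the rotation sends $A'$ to the foot of the perpendicular from $B$ to $FE$, namely $B'$, so $|AA'|=|BB'|$ and $F$ is the midpoint of the segment $A'B'$. The identical argument using the $\pi$-rotation about $E$ yields $|AA'|=|CC'|$ and that $E$ is the midpoint of $A'C'$.

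From these midpoint relations, parametrising the line $FE$ by a signed coordinate with $A'$ at the origin and placing $F$ at position $f$ and $E$ at position $e$, one reads off that $B'$ sits at $2f$ and $C'$ at $2e$, so $|B'C'| = 2|f-e| = 2|FE|$. Together with $|BB'|=|CC'|$ and the automatic right angles at $B'$ and $C'$, this establishes that $C'B'BC$ is a Saccheri quadrilateral with base $B'C'$ and summit $BC$. For the symmetry-line claim, I would appeal to the fact that the perpendicular bisector of base and summit of a Saccheri quadrilateral is unique and common to both: the perpendicular to $BC$ at its midpoint $D$ is this unique line, and it meets line $FE$ perpendicularly at the foot of the perpendicular from $D$ to $FE$, which by definition is $D'$. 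Hence $D'$ is the midpoint of $B'C'$ and $DD'$ is the symmetry line.

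The step requiring most care is ensuring that $C'B'BC$ is genuinely a non-self-intersecting quadrilateral, which is precisely what the smallness provided by Lemma \ref{claim1} secures; everything else is a clean unpacking of the symmetry afforded by the $\pi$-rotation isometries together with the classical characterisation of the Saccheri symmetry line.
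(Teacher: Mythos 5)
Your half-turn argument is a genuinely different and in places cleaner route than the paper's. The paper establishes $|AA'|=|BB'|=|CC'|$ by exhibiting pairs of congruent triangles ($FB'B\cong FA'A$, $EAA'\cong ECC'$), whereas you get the same conclusion, plus the midpoint relations $|FA'|=|FB'|$ and $|EA'|=|EC'|$, in one stroke from the $\pi$-rotations about $F$ and $E$ (which do preserve the line $FE$ and commute with orthogonal projection, granted the projections are unique -- which your appeal to Remark \ref{small} secures). Your signed-coordinate computation of $|B'C'|=2|FE|$ is also nicer than the paper's, which splits into three cases according to whether $\beta_n$ or $\gamma_n$ is obtuse; your parametrisation absorbs all the sign bookkeeping at once. (You should still say a word about why $2|f-e|\leq\pi$, so that the arc of that length is the minimal one, but smallness gives this immediately.)

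However, there is a genuine gap, and it sits exactly where the paper's proof does its real work: the spherical case. You assert that smallness makes $C'B'BC$ a non-self-intersecting quadrilateral, and you acknowledge this is ``the step requiring most care,'' but you never supply the argument. Smallness alone is not a proof: one must rule out both $BC$ meeting $B'C'$ and $BB'$ meeting $CC'$. The paper does this by placing $B$ and $C$ in the interior of a right-angled spherical lune bounded by arcs through $B'$ and $C'$, completing the lune to a hemisphere, and using convexity of open hemispheres; the crossing of $BB'$ and $CC'$ is then excluded because it would force both heights to exceed $\pi/2$. A second, related omission: in your final step you identify the foot $D'$ of the perpendicular from $D$ with the midpoint of $B'C'$. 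On the sphere the symmetry line of the Saccheri quadrilateral is a great circle meeting the great circle $FE$ perpendicularly at \emph{two} antipodal points, only one of which is the midpoint of $B'C'$, and one must check that this is the nearer one (the paper proves $|DD''|<\pi/2$ via the same lune). Both points are fixable with the tools you already invoked, but as written the spherical half of the lemma is asserted rather than proven.
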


\begin{proof}
In the spherical setting, we refer to Remark \ref{small} to select $N\in\mathbb N$ such that all the edge-lengths of $t_n$ and all heights are strictly less than $\frac{\pi}{2}$. In the hyperbolic setting, we select $N=0$. 

By construction, the triangles $FB'B$ and $FA'A$ share an angle and two edge-lengths, and are therefore congruent. Likewise for the triangles $EAA'$ and $ECC'$. This shows that $|AA'|=|CC'|=|BB'|$. Since $B'$ and $C'$ are the orthogonal projections of $B$ and $C$ on the line $FE$, there only remains to show that $C'B'BC$ is indeed a quadrilateral. This is clear in the hyperbolic case, but requires more care in the spherical case. 

In the spherical setting, we first observe that $B'C'$ and $BC$ cannot intersect. Indeed, $|B'C'|\leq\pi$, $|B'B|,|C'C|<\frac{\pi}{2}$ and  $\angle C'B'B=\angle B'C'C=\frac{\pi}{2}$. We cannot have $B'=B$ or $C=C'$ as it would imply that all points in fact lie on a single great circle. Therefore, it must be that $B$ and $C$ are two distinct interior points of the same right-angled spherical lune with one of its half great circles passing through $B'$ and $C'$ (see Fig. \ref{fig:lune}). However, the geodesic joining any two interior points of a right-angled spherical lune does not cross either boundary edge (a right-angled spherical lune can be completed to a hemisphere sharing either half great circle of the lune as its boundary great circle, and open hemispheres are convex). The only possibility is then that $BB'$ and $CC'$ intersect. But since $\angle C'B'B=\angle B'C'C=\frac{\pi}{2}$, this would imply that both $|BB'|$ and $|CC'|$ are greater than $\frac{\pi}{2}$, which contradicts our definition of orthogonal projections. This argument proves that $C'B'BC$ is a Saccheri quadrilateral. 

\begin{figure}[H]
	\begin{center}     
    	\includegraphics[width=0.4\textwidth]{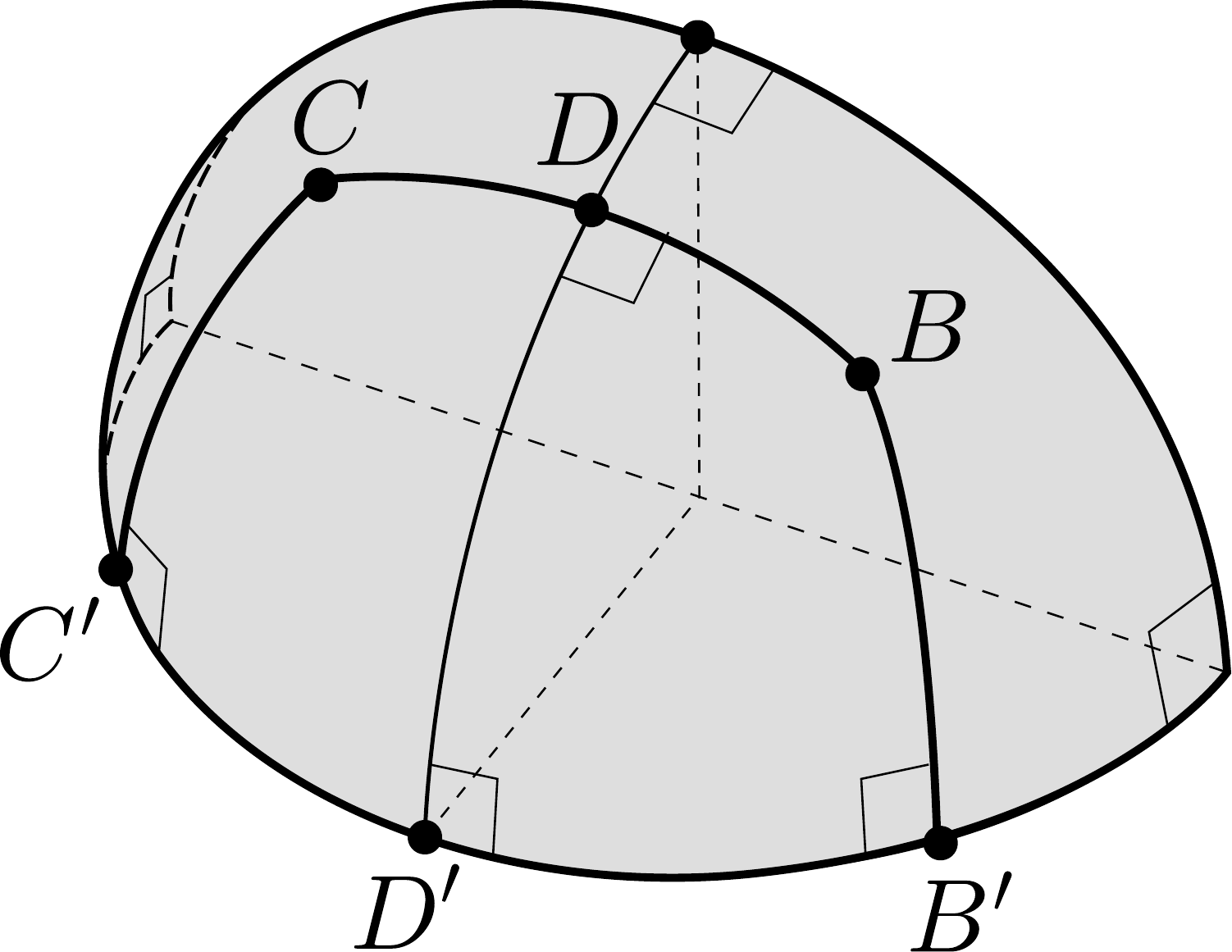}
        	\caption{}
        	\label{fig:lune}
	\end{center}
\end{figure}
\vspace{-1em}
We now prove that $D'$ is the midpoint of $B'C'$ and not its antipodal point. Denote by $D''$ the midpoint of $B'C'$. In the previous paragraph, we have shown all four points $B$, $C$, $B'$ and $C'$ to all lie in the same right-angled spherical lune. Since $C'B'BC$ is Saccheri, we now also know that the line through the midpoints of its base and summit is perpendicular to both base and summit (and is its only line of symmetry). Because of this, we can choose the right-angled lune containing all four points to have $D''$ as the midpoint of one of its two boundary half great circles. The edge $DD''$ can then be seen as a strict sub arc of the lune's equatorial arc (see Fig \ref{fig:lune}), which proves that $|DD''|<\frac{\pi}{2}$ and confirms that~$D''=D'$.

Lastly, we justify why the base $B'C'$ has length $2|FE|$. By construction, we have that $|B'C'|=|B'A'| + |A'C'|$ (resp. $|B'C'|=-|B'A'| + |A'C'|$, $|B'C'|=|B'A'| - |A'C'|$) when $\beta_n$ and $\gamma_n$ are acute (resp. when $\beta_n$ is obtuse, $\gamma_n$ is obtuse). Note that the case where both $\beta_n$ and $\gamma_n$ are obtuse is impossible since a spherical triangle with all its edge-lengths smaller than $\frac{\pi}{2}$ has at most one obtuse angle (this is a direct consequence of the spherical law of cosines).  Observe then that $|FE|=|FA'|+|A'E|$ (resp. $|FE|=-|FA'|+|A'E|$, $|FE|=|FA'|-|A'E|$) and we have $|B'A'|=2|FA'|$ and $|A'C'|=2|A'E|$ in all cases, since $2|FA'|$, $2|A'E|<\frac{\pi}{2}$. Thus, $|B'C'|=2|FA'|+2|A'E|=2|FE|$ (resp. $|B'C'|=-2|FA'|+2|A'E|=2|FE|$, $|B'C'|=2|FA'|-2|A'E|=2|FE|$).
\end{proof}

\begin{proof}[Proof of Lemma \ref{lemma:lengths}.] Using Lemma \ref{claim1}, we first show that the statement of Lemma \ref{lemma:lengths} is true for the sines of the edge lengths and the added constraint that $C\geq1$, namely:

\textbf{Claim.} \textit{ There exists an integer $N$ and a positive constant $1\leq C<2$ depending only on $t_0$ such that, for all $n>N$, $\sin a_{n+1} \leq C \> \sin \frac{a_n}{2}$.}\\
\textit{Proof of Claim.} The only cases of interest are the two non-trivial cases where $a_{n+1}$ is obtained as the parallel side of $a_n$ in $t_{n+1}$. In both of these cases, we can appeal to Remark \ref{small} and use Lemma \ref{lemma:saccheri-both} and the formulae of Lemma \ref{lemma:Lambert-Trigo} to obtain:

\[
\sin \frac{a_n}{2}=\sin |DC|=\sin |D'C'|\cdot \cos |CC'|=\sin a_{n+1}\cdot \cos |CC'|\tag{$\star$}
\]

If we now suppose by contradiction that our claim is false, then for all $N\in \mathbb N$ and for all $1<C<2$, there exists $n>N$ such that $\sin a_{n+1}>C\sin \frac{a_n}{2}$. Using $(\star)$, this implies that:

\[
\cos|CC'|=\frac{\sin\frac{a_n}{2}}{\sin a_{n+1}} < C^{-1}
\]

which in turn implies that:

\[|CC'| > \arccos C^{-1}>0\]

However, in positive curvature, the sides of a Lambert quadrilateral incident to the apex are strictly smaller than than their opposite side in the quadrilateral. Therefore, $|CC'| < |DD'|$ (using the Lambert quadrilateral $C'D'DC$). Applying the spherical version of Pythagoras' theorem in the right angled triangle $DD'F$, we see that $|DD'|\leq|FD|$, since $|DD'|,|DF|\leq\frac{\pi}{2}$. As Lemma \ref{claim1} guarantees that $|FD|\rightarrow 0$ when $n\rightarrow \infty$, we have shown that $|CC'|\rightarrow 0$ when $n \rightarrow \infty$. This contradicts $|CC'| > \arccos C^{-1}$ and proves the claim.

To get back to the proof of the lemma, we first note that, for all $x>0$, we have $\sin x<x$. On the other hand, for any $\epsilon>0$, we have $(1-\epsilon)x < \sin x$, for $x$ small enough. Using the particular value of $C$ and $N$ given by the previous claim, we can choose an $\epsilon>0$ small enough to guarantee that $C\slash (1-\epsilon)<2$. There is then an integer $N'>N$ large enough to guarantee that, for all $n>N'$:

\[
(1-\epsilon)a_{n+1}< \sin a_{n+1} < C \sin \left(\frac{a_n}{2}\right) < C \frac{a_n}{2}
\]

which gives the following desired inequality:

\[
a_{n+1} < \frac{C}{1-\epsilon} \frac{a_n}{2}
\]

and finishes the lemma, as $C\slash (1-\epsilon)<2$.

\end{proof}

We now get back to the proof of Theorem C, which we state again below.
\vspace{0.5em}

\noindent \textbf{Theorem C. }\textit{For any sequence of nested triangles $t_0,t_1.\ldots$ and for all $n\in\mathbb N$, there exists $l_a, L_a>0$ such that:
\[a_0\cdot l_a \leq  2^n \cdot a_n \leq a_0 \tag{\textit{Hyperbolic geometry}}\]
\[a_0 \leq 2^n \cdot a_n \leq a_0\cdot L_a \tag{\textit{Spherical geometry}}\]
and similarly for $b_n$, $c_n$. In addition, in the non-trivial cases where there exists at least some integer $n\in\mathbb N$ such that $a_{n+1}$ is obtained as the parallel side of $a_n$ in $t_{n+1}$, the inequalities are strict and $l_a$ (resp. $L_a$) approaches $1$ from below (resp. above) in the hyperbolic (resp. spherical) case as all the side lengths of $t_0$ become smaller.} 
\begin{proof}[Proof of Theorem C (Hyperbolic Setting)]
\textbf{Hyperbolic Upper Bound.} Note first that Lemma \ref{lemma:mid-edge} tells us that the sequence $(2^n a_n)_{n\in\mathbb N}$ is decreasing as $\frac{2^{n+1} a_{n+1}}{2^n a_n}=\frac{2 a_{n+1}}{a_n}\leq 1$. Since the sequence is bounded below by $0$, this guarantees its convergence to a non-negative limit. Applying Lemma \ref{lemma:mid-edge} $n$ times also directly gives us the $a_0$ upper bound:  $2^n a_n = 2^{n-1}(2 a_n)\leq 2^{n-1}a_{n-1} \leq \ldots \leq 2 a_1 \leq a_0$. 

\textbf{Hyperbolic Lower Bound.} Our task is to show that the sequence $(2^n a_n)_{n\in\mathbb N}$ converges to a strictly positive limit. Let us start by rewriting the previous limit as the following infinite product:

\[
\lim_{n\to\infty} 2^n \cdot a_n = \lim_{n\to\infty} \frac{2a_n}{a_{n-1}}\cdot \frac{2a_{n-1}}{a_{n-2}}\cdot\ldots \cdot \frac{2a_{1}}{a_0}\cdot a_0
\]

The convergence to a strictly positive value of this infinite product is equivalent to the finiteness of the infinite sum of the logarithm of its factors. Namely:

\[
a_0 \> \prod_{n=1}^{\infty} \frac{2a_{n}}{a_{n-1}}>0 \iff \left|\ln(a_0)\right| + \sum_{n=1}^{\infty}\left| \ln \left( \frac{2a_{n}}{a_{n-1}}\right)\right|=\left|\ln(a_0)\right|+\sum_{n=1}^{\infty} \ln\left(\frac{a_{n-1}}{2a_{n}}\right)<\infty
\]
Note that Lemma \ref{lemma:mid-edge} tells us the sign of the ratio inside the absolute value. In order to prove that this sum is indeed finite, we will show that the following inequality holds:
\[
\label{eqn:star}
\tag{$\star$}
\ln \left( \frac{a_{n-1}}{2a_{n}}\right)<\frac{b_n}{2}
\]

The convergence of the geometric series $(b_0\> 2^{-(n+1)})_{n\in\mathbb N}$ then ensures the convergence of the logarithm sum, as $\frac{b_n}{2}\leq b_0 \> 2^{-(n+1)}$. It then also provides us with a uniform bound:

\[
\sum_{n=1}^\infty \ln \left( \frac{2a_{n}}{a_{n-1}}\right)=-\sum_{n=1}^\infty \ln \left( \frac{a_{n-1}}{2a_{n}}\right)>-\sum_{n=1}^\infty \frac{b_n}{2}>-\frac{1}{2}\sum_{n=1}^\infty b_0\cdot 2^{-n} = - \frac{b_0}{2}
\]

This gives us that $l_a =e^{-\frac{b_0}{2}}$ is a valid choice, since after taking the exponential on both sides in the previous inequality, we obtain:

\[
2^n \cdot a_n > \lim_{n\to\infty} 2^n \cdot a_n = a_0\prod_{n=1}^{\infty} \frac{2a_{n}}{a_{n-1}} > a_0 \> e^{-\frac{b}{2}}
\]
Note that $l_a$ indeed approaches $1$ from below when all side lengths of $t_0$ become smaller (thus approaching the Euclidean case for small triangles). We now proceed to prove inequality (\ref{eqn:star}). Note that this inequality is trivially satisfied for the terms of the sequences where $a_{n+1}$ is contained in $a_n$, since the logarithms of their corresponding ratio are then each equal to zero and do not contribute to the sum. It is thus enough to prove it in the case where $a_{n+1}$ is obtained as the parallel side of $a_n$ in $t_{n+1}$.

We shall use the same notation as on Figure \ref{fig:construction-both}, with the added simplification that we will write $a$ in place of $a_{n}$ and $a'$ in place of $a_{n+1}$, and similarly for the other sides. We begin by extending the geodesic segment joining $E$ and $F$ on either side of the triangle and introduce points $G$ and $H$ such that $|GE|=|EF|=|FH|=a'$. The resulting figure, resembling a ``jester hat'', can be seen on Figure \ref{fig:bat}. The geodesic triangle $GEC$ and $FEA$ share two equal sides and an angle and are therefore congruent. For the same reason, the triangle $FHB$ is also congruent to $FEA$ and therefore also to $GEC$. 

\begin{figure}[H]
	\begin{center}     
    	\includegraphics[width=\textwidth]{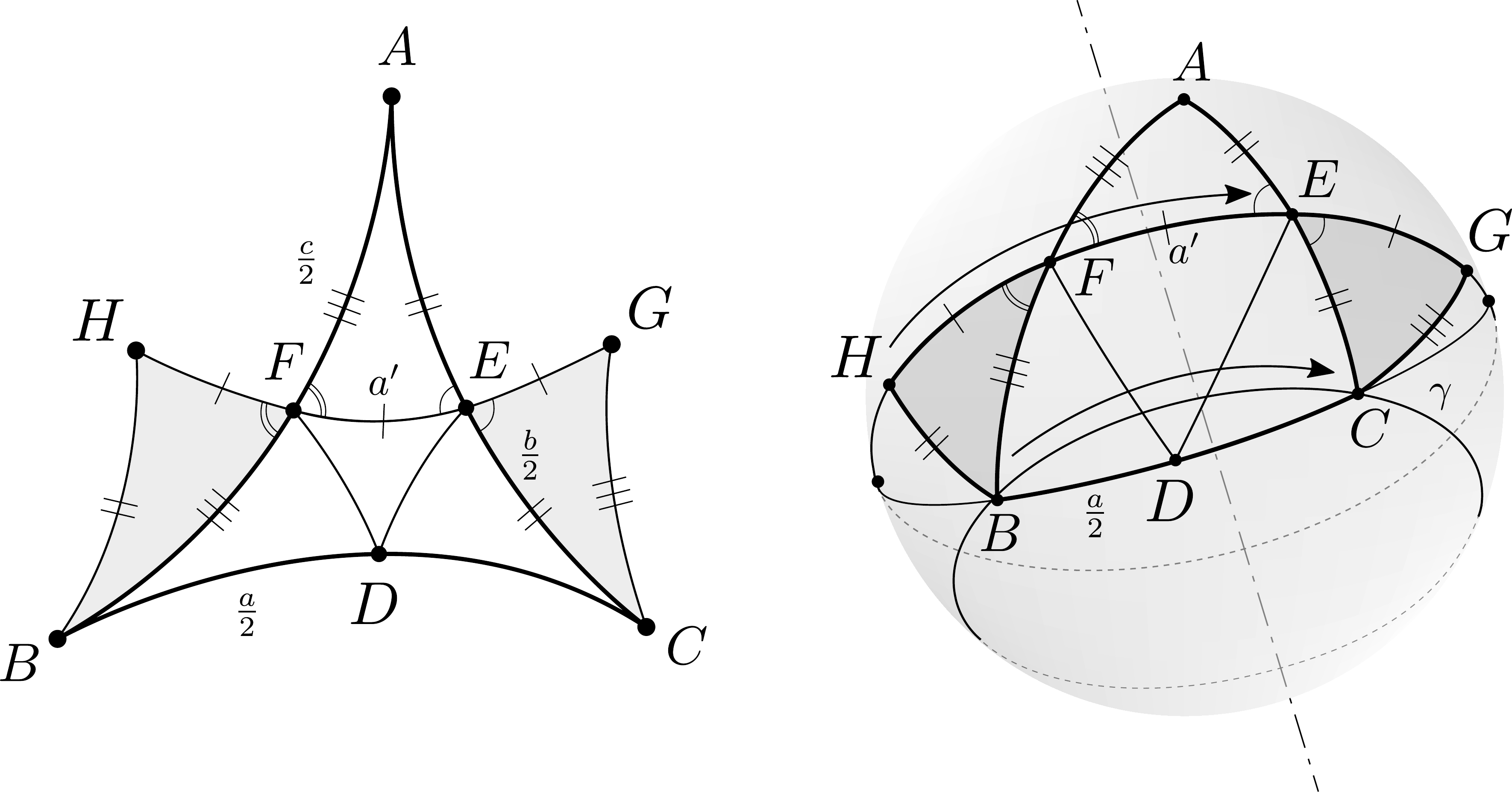}
        	\caption{: The hyperbolic and spherical ``jester hats''.}
        	\label{fig:bat}
	\end{center}
\end{figure}

\begin{rem}
\label{newproof}
We take this opportunity to give a short elegant proof of Lemma~\ref{lemma:mid-edge}, stating that $2a'<a$ in hyperbolic triangles and $2a'>a$ in spherical triangles. Indeed, with this construction we know that there is a hyperbolic translation, with axis the geodesic line going through $E$ and $F$ and translation distance $2a'$, taking the triangle $GEC$ to the triangle $FHB$. Since the minimum translation distance of a hyperbolic translation is realised for points on its axis, we can conclude that $a>2a'$. We can of course replicate the exact same construction in the spherical case, where the hyperbolic translation along the line $EF$ now becomes the rotation of the sphere whose axis has the great circle passing through $EF$ as its equator circle. Since the maximum translation distance of a rotation of the sphere is realised along its associated equator circle (see Fig. \ref{fig:bat}), this concludes the proof of Lemma~\ref{lemma:mid-edge}.
\end{rem}

We now turn our attention to the geodesic quadrilateral $EHBC$. There is a unique equidistant curve $\gamma$ staying within a fixed distance $h$ from the geodesic line $EF$ and passing through $B$ and $C$ (Figure \ref{fig:4}). Since $h<\frac{b}{2}$ and the length $L_{\gamma}(B,C)$ of $\gamma$ between $B$ and $C$ is more than that of the geodesic segment joining $B$ and $C$, it is enough to show that 

\[
\ln \left(\frac{L_{\gamma}(B,C)}{2a'}\right) < h
\]
\vspace{-0.8em}
\begin{figure}[H]
	\begin{center}     
    	\includegraphics[width=0.55\textwidth]{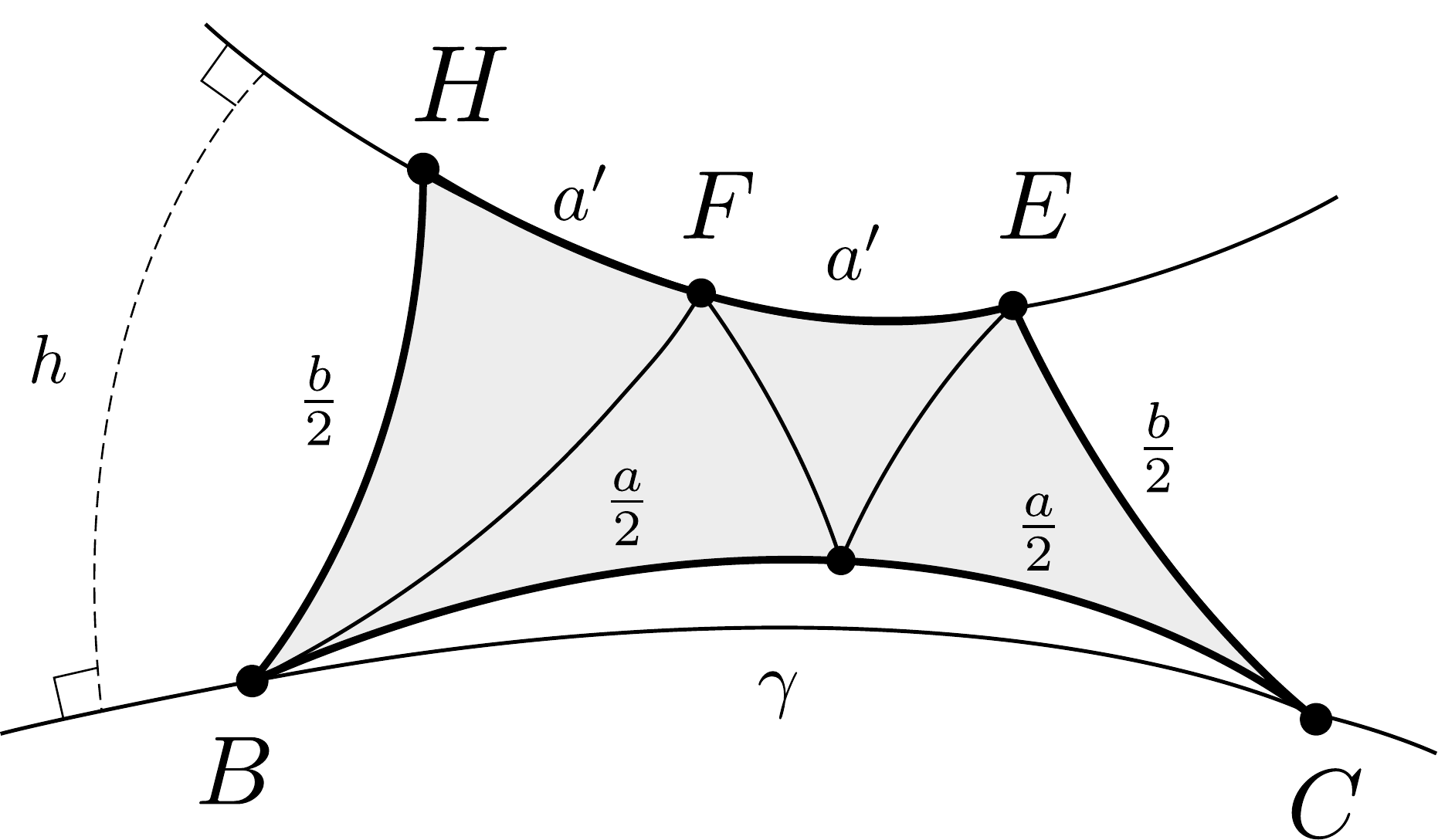}
        	\caption{}
        	\label{fig:4}
	\end{center}
\end{figure}

\noindent in order to show (\ref{eqn:star}). The key to the demonstration is now to notice that the figure in invariant under translation along $(EH)$, so that we may look at the ratio of infinitesimal displacements along $(EH)$ and $\gamma$ instead of that between $L_{\gamma}(B,C)$ and $2a'$. If we consider the quadrilateral $EHBC$ in the upper half-plane model (Figure \ref{fig:half-plane}), the equidistant curve $\gamma$ is now a line meeting the geodesic line $(EH)$ on the boundary. Denote by $\theta$ the angle between $(EH)$ and $\gamma$. Since the metric in the upper half-plane is scaled by the inverse of the $y$-coordinate, the ratio between infinitesimal displacements along $(EH)$ and $\gamma$ is simply given by $\cos ^{-1} \theta$. 

\begin{figure}[H]
	\begin{center}     
    	\includegraphics[width=0.65\textwidth]{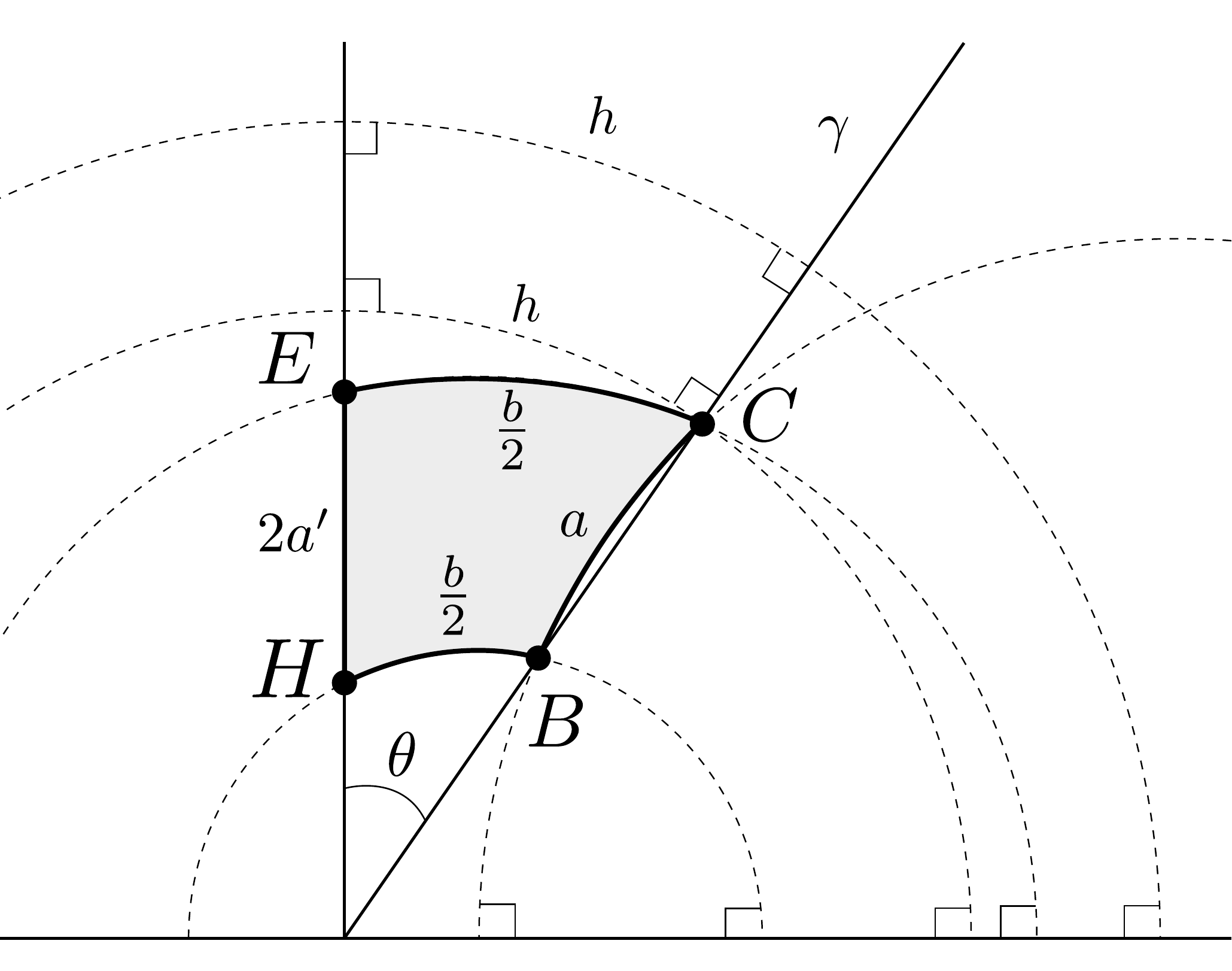}
        	\caption{}
        	\label{fig:half-plane}
	\end{center}
\end{figure}

On the other hand, we can compute the distance $h$ between the geodesic line $(EH)$ and the equidistant curve $\gamma$ using the metric of the upper half-plane model:

\[
h = \int_0^\theta \frac{1}{\cos t}dt
\]

Putting both together, it then remains to prove the following inequality:

\[
\ln \left(\frac{L_{\gamma}(B,C)}{2a'}\right) = \ln \left(\cos ^{-1}\theta \right)   < \int_0^\theta \frac{1}{\cos t}dt = h
\]

To obtain that inequality, it is enough to differentiate both sides with respect to $\theta$ and show that the derivatives verify the inequality. After differentiating and simplifying, we reach the following equivalent inequality:

\[
\sin \theta < 1
\]

Note that the previous inequality need in fact only be strict at one point to guarantee the strict inequality after integration (the point $\theta=0$, for example, is then enough). Alternatively, we can also observe that the only possibility for equality is when $\theta=\frac{\pi}{2}$. However, the line $EH$ and $\gamma$ are at finite distance $h$ from each other, which implies precisely that $\theta<\frac{\pi}{2}$.

\end{proof}

\begin{proof}[Proof of Theorem C (Spherical Setting)]
\textbf{Spherical Lower Bound.} We start with the opposite observation than in the hyperbolic case, namely that Lemma \ref{lemma:mid-edge} here
gives us the lower bound instead of the upper bound: $2^n a_n \geq a_0$. Likewise, it also informs us that the sequence $(2^n a_n)_{n\in\mathbb N}$ is increasing as $\frac{2^{n+1} a_{n+1}}{2^n a_n}=\frac{2 a_{n+1}}{a_n}\geq 1 $. Our task is thus to show that the sequence $(2^n a_n)_{n\in\mathbb N}$ is bounded above.

\textbf{Spherical Upper Bound.} In the spherical case, we give a shorter, purely trigonometric proof (note that this proof is also available in the hyperbolic case). It will prove easier to exhibit an upper bound for the sequence $(2^n  \sin a_n)_{n\in\mathbb N}$ instead, which will transfer to our original sequence via a small correction term. As in the hyperbolic setting, we start by rewriting the $n$-th term of the sequence as the following product:

\[
2^n \cdot \sin a_n = \frac{2\sin a_n}{\sin a_{n-1}}\cdot \frac{2\sin a_{n-1}}{\sin a_{n-2}}\cdot\ldots \cdot \frac{2\sin a_{1}}{\sin a_0}\cdot \sin a_0
\]

Finding an upper bound for this product of $n$ terms is again equivalent to finding an upper bound to the sum of the logarithm of its factors. Namely:

\[
a_0 \> \prod_{i=1}^{n} \frac{2\sin a_{i}}{\sin a_{i-1}}>\delta > 0 \iff \left|\ln(a_0)\right| + \sum_{i=1}^{n}\left| \ln \left( \frac{2 \sin a_{i}}{\sin a_{i-1}}\right)\right|<\Delta <\infty
\]
Where $e^\Delta = \delta$. There again, Lemma \ref{lemma:mid-edge} tells us the sign of the ratio inside the absolute value. To prove an upper bound on this sum of logarithms, we take up again the notation of Figure \ref{fig:construction-both} and focus on the non-trivial case where $a_{n+1}$ is obtained as the parallel side of $a_n$ in $t_{n+1}$. Lemma \ref{lemma:saccheri-both} guarantees that for a large enough $n$, the quadrilateral $C'B'BC$ is Saccheri and the quadrilateral $C'D'DC$ is Lambert. For such an $n$, we can now use the spherical Pythagoras theorem in the triangle $D'CC'$ and Lemma \ref{lemma:Lambert-Trigo} in $C'D'DC$ to obtain the two following identities:

\[ \cos |D'C| = \cos |CC'|\cdot \cos |D'C'| \tag{3.1}\]

\[ \sin |DC| = \sin |D'C'| \cdot \cos |CC'|  \tag{3.2}\]

Using equation (3.2) together with the double angle formula for the sine, we reach the following inequality:
\[ \frac{2 \sin \frac{a_{n+1}}{2}}{\sin \frac{a_n}{2}} =
\left(\cos \left( \frac{a_{n+1}}{2}\right) \cos |CC'|\right)^{-1}>1 \tag{3.3}\]

Combining equation (3.3) with equation (3.1), we obtain, for $t_n$ small enough:
\[
\frac{2 \sin \frac{a_{n+1}}{2}}{\sin \frac{a_n}{2}} =\frac{\cos a_{n+1}}{\cos |D'C|\cdot \cos \frac{a_{n+1}}{2}} < \left(\cos\frac{a_{n+1}}{2}\right)^{-1}\tag{3.4}
\]
The last inequality is obtained by noticing that, for small enough spherical triangles, the length of the hypotenuse $|D'C|$ in the right-angled spherical triangle $D'CC'$ is larger than that of the leg $|D'C'|=a_{n+1}$. This can be shown to be true of any spherical triangle contained in a spherical octant and thus, in particular, for any triangle with side lengths smaller than $\frac{\pi}{3}$. By Lemmas \ref{claim1} and \ref{lemma:lengths}, we can pick $N\in\mathbb N$ such that for all $n>N$, all three sides of $t_n$ are smaller than $\frac{\pi}{3}$ and the inequality $a_{n+1}\leq C a_n / 2$ holds. Since all side-lengths are strictly less than $\frac{\pi}{2}$, our choice of $N$ was large enough to guarantee that the quadrilateral $C'B'BC$ is Saccheri, allowing us to make use of the previous derivations. Writing $C_N=\sum_{i=1}^{N}\left(\frac{2 \sin a_{i+1}}{2}/\sin \frac{a_i}{2}\right)$, we can now write the following inequality:

\[ \sum_{i=1}^{n} \ln \left( \frac{2 \sin \frac{a_{i+1}}{2}}{\sin \frac{a_i}{2}}\right) <  - \sum_{i=N}^{n} \ln \cos \frac{a_{i+1}}{2} +C_N< \sum_{i=N}^{n} \frac{a_{i+1}}{2} +C_N\tag{3.5}\]
where the last inequality stems from the observation that $-\ln \cos (x) < x$ for $x\in (0,\frac{\pi}{3})$. Indeed, the first function is strictly convex and has vanishing derivative at $0$. It is then enough to check that the inequality is true in $\frac{\pi}{3}$: a quick computation gives $\cos \frac{\pi}{3}=\frac{1}{2}>\frac{1}{e}>\frac{1}{e^\frac{\pi}{3}}$, which is the desired inequality after taking logarithms and changing signs. 
Using Lemma \ref{lemma:lengths} and after multiplying inequality (3.5) by $\ln\sin\frac{a_0}{2}$ and taking the exponential, we  obtain the following chain of inequalities:
\[
2^n \sin \frac{a_n}{2} < \sin\frac{a_0}{2} \exp{\left(\frac{a_0}{2}\sum_{i=N}^{n} \left(\frac{C}{2}\right)^{i}+C_N\right)} < \sin\frac{a_0}{2}  \exp{\left(a_0\frac{ \left(C/2\right)^{N}}{2-C}+C_N\right)}
\]

For all $n\in\mathbb N$, let us consider the quantity $\epsilon_n = a_n/\sin a_n -1$. it is easy to see that this quantity is always strictly positive, goes to $0$ as $n\rightarrow \infty$ and $(\epsilon_n)_{n\in\mathbb N}$ is a monotone decreasing sequence. Noting that $\sin a_n < 2\sin \frac{a_n}{2}$ and $\sin \frac{a_0}{2}  < \frac{a_0}{2}$, we obtain:

\[
2^n a_n = 2^n (1+\epsilon_n)  \sin a_n < (1+\epsilon_0) \> a_0\> \exp{\left(a_0\frac{ \left(C/2\right)^{N}}{2-C}+C_N\right)}
\]

From this, we conclude that $l_a =(1+\epsilon_0)\exp{\left(a_0 \left(C/2\right)^{N}/(2-C)+C_N\right)}$ is a valid choice. Indeed, by construction, both $N$ and $C_N$ become zero for $a_0$ small enough. Since $C$ is fixed, we also have that both $(1+\epsilon_0)$ and the exponential term approach~$1$ from above when $a_0$ becomes small.
\end{proof}

\section{Stabilisation of Heights}
\label{sec4}

In this section, we investigate the behaviour of heights in the subdivision and show that they also ``stabilise" to the Euclidean case as our triangulations refine. The exact meaning of this expression is made precise by the statement of the following proposition:

\begin{prop}
\label{lemma:altitude}
For any sequence of nested triangles $t_0,t_1,\ldots$, and for all $n\in\mathbb N$, there exists $l_h, L_h>0$ such that:
\[ h_0\cdot l_h \leq 2^n \cdot h_n \leq h_0 \cdot L_h\]
where $h_n$ denotes the height from the vertex incident to $\alpha_n$ onto the line prolonging the side of length $a_{n}$ in $t_n$. In addition, in the non-trivial cases where there exists at least some integer $n\in\mathbb N$ such that $a_{n+1}$ is obtained as the parallel side of $a_n$ in $t_{n+1}$, the inequalities are strict and $l_h$ (resp. $L_h$) approaches 1 from below (resp. above) as all the side lengths of $t_0$ become smaller. 
\end{prop}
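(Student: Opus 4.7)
The plan is to adapt the infinite-product telescoping strategy from the proof of Theorem~C, whose key new ingredient is an explicit multiplicative recursion $2\sinh h_{n+1}=R_n\sinh h_n$ (and the spherical analogue with $\sin$), where $R_n$ is a ratio of hyperbolic (resp.\ spherical) cosines of edge lengths of $t_n$, and hence tends to $1$ at a geometric rate as $n$ grows.

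I split into the four possible positions of $t_{n+1}$ inside $t_n$. The three outer cases are quickest: for instance, if $t_{n+1}=BDF$ then the vertex incident to $\alpha_{n+1}$ is the midpoint $F$ of $AB$, and in the right triangle $BFF''$ (with $F''$ the foot of the perpendicular from $F$ onto $BC$) one has $\sinh h_{n+1}=\sinh(c_n/2)\sin\beta_n$, which combined with the analogous identity $\sinh h_n=\sinh c_n\sin\beta_n$ from the right triangle $ABA''$ gives
\[
\frac{2\sinh h_{n+1}}{\sinh h_n}=\frac{1}{\cosh(c_n/2)}.
\]
For $t_{n+1}=CDE$ the formula is analogous with $b_n$ in place of $c_n$, while for $t_{n+1}=AEF$ the height is $h_{n+1}=\eta_n$, where $\eta_n:=|AA'|=|BB'|=|CC'|$ is the common distance of the vertices of $t_n$ to the line $FE$ as in Lemma~\ref{lemma:saccheri-both}. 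The inner case $t_{n+1}=DEF$, in which $h_{n+1}=|DD'|$, is the richest: combining the two Lambert identities from Lemma~\ref{lemma:Lambert-Trigo} applied to $C'D'DC$,
\[
\sinh(a_n/2)=\sinh a_{n+1}\cosh\eta_n,\qquad \sinh\eta_n=\sinh h_{n+1}\cosh(a_n/2),
\]
with the expression $\sinh\eta_n=\sinh(b_n/2)\sin(\angle AEF)$ (from the right triangle $EAA'$) and the law of sines in both $t_n$ and $AEF$ produces
\[
\frac{2\sinh h_{n+1}}{\sinh h_n}=\frac{\cosh\eta_n}{\cosh(b_n/2)\cosh(c_n/2)}.
\]

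Writing $R_n$ for the right-hand side in each case, Theorem~C gives $a_n,b_n,c_n=O(2^{-n})$, so each $\cosh(\cdot/2)=1+O(4^{-n})$, and the auxiliary $\eta_n=O(2^{-n})$ is controlled by the second Lambert identity above together with the elementary bound $\sinh h_n\leq\sinh c_n$ from the right triangle $ABA''$. Thus $\sum_k|\ln R_k|<\infty$ and telescoping $2^n\sinh h_n/\sinh h_0=\prod_{k=0}^{n-1}R_k$ exactly as in the proof of Theorem~C yields uniform two-sided bounds on $2^n\sinh h_n$; these convert to the stated bounds on $2^n h_n$ via $h_n/\sinh h_n\to 1$. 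Strict inequalities in the non-trivial case, and the claim that $l_h,L_h\to 1$ as the lengths of $t_0$ shrink, follow from the hyperbolic Pythagoras identity $\cosh(b_n/2)=\cosh\eta_n\cosh|EA'|$ (and its counterpart for $c_n/2$), which forces $R_n<1$ strictly whenever $\eta_n>0$ and makes the bound on $\sum|\ln R_k|$ linear in the initial lengths. The spherical case is formally identical with hyperbolic functions replaced by their spherical counterparts; the inequalities reverse sign because $\cos\leq 1$ makes $R_n\geq 1$.

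The main obstacle is the inner-case identity, which requires orchestrating both Lambert identities for $C'D'DC$ together with two separate applications of the law of sines and careful bookkeeping of the auxiliary quantity $\eta_n$; the outer cases are brief by comparison. A secondary point is avoiding any circular dependence on Theorem~A or~B, handled by keeping all estimates purely length-based via Theorem~C and the right-triangle sine bound $\sinh h_n=\sinh c_n\sin\beta_n\leq\sinh c_n$. Once the recursion $2\sinh h_{n+1}=R_n\sinh h_n$ is in place, the telescoping argument is essentially verbatim from the proof of Theorem~C.
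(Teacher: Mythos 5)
Your overall strategy is the same as the paper's: telescope the product of the ratios $2\sinh h_{n+1}/\sinh h_n$, compute each ratio in closed form from Lambert-quadrilateral trigonometry, and use Theorem C to make $\sum_k\lvert\ln R_k\rvert$ summable and small when $t_0$ is small. Your inner-case identity $\tfrac{2\sinh h_{n+1}}{\sinh h_n}=\tfrac{\cosh\eta_n}{\cosh(b_n/2)\cosh(c_n/2)}$ is correct (I checked it against the two Lambert identities and the sine rule) and is arguably cleaner than the paper's version $\bigl(\cosh(b_n/2)\cosh\lvert ED'\rvert\bigr)^{-1}$, which requires the extra step $\lvert ED'\rvert\leq b_{n+1}$; your formulas for the cases $t_{n+1}=BDF$ and $t_{n+1}=CDE$ are also correct.

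There is, however, one concrete error: the claim that the Pythagorean identity $\cosh(b_n/2)=\cosh\eta_n\cosh\lvert EA'\rvert$ forces $R_n<1$ in all four hyperbolic cases. It does so for $DEF$, $BDF$ and $CDE$, but not for $t_{n+1}=AEF$, where $h_{n+1}=\eta_n$ and hence, by your own second Lambert identity, $R_n=2\sinh\eta_n/\sinh h_n=\cosh(a_n/2)\cdot\cosh\eta_n/\bigl(\cosh(b_n/2)\cosh(c_n/2)\bigr)$. This exceeds $1$ for flat triangles: letting $A$ approach the midpoint $D$ of $BC$, one gets $\eta_n\to0$, $b_n,c_n\to a_n/2$, and $R_n\to\cosh(a_n/2)/\cosh^2(a_n/4)=2-\cosh^{-2}(a_n/4)>1$. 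This is exactly why the paper runs a separate, more laborious argument (its quantity $\rho_{n+1}$) for the $EAF$ case and why the hyperbolic upper bound in the statement is $h_0\cdot L_h$ with $L_h>1$ rather than $h_0$, in contrast with Theorem C. The error is not fatal: since $\eta_n\leq b_n/2$ one still gets $1-O(b_n^2+c_n^2)\leq R_n\leq\cosh(a_n/2)$ in this case, so $\lvert\ln R_n\rvert=O(4^{-n})$ and your telescoping goes through with two-sided constants tending to $1$; but the sentence asserting $R_n<1$ "whenever $\eta_n>0$" must be repaired, and the symmetric caveat applies to your one-line dispatch of the spherical case (where you also need to restrict to $n$ larger than the $N$ of Lemma \ref{lemma:saccheri-both} before the Saccheri/Lambert structure and the projections are even available, absorbing the first $N$ factors into a constant as the paper does).
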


The following lemma will prove to be useful to simplify our proof of Proposition~\ref{lemma:altitude}:

\begin{lemma}
\label{lemma:trick}
In the hyperbolic setting (resp. in the spherical setting, for $n$ large enough), the height of the vertex incident to $\alpha_{n+1}$ to the line prolonging the side of length $a_{n+1}$ in $t_{n+1}$ is minimal among all 4 choices of $t_{n+1}$ (resp. maximal) when the innermost triangle of $t_n$ is selected.
\end{lemma}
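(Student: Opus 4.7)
The plan is to catalogue, for each of the four possible choices of $t_{n+1}$, the height of the vertex incident to $\alpha_{n+1}$ onto the line carrying $a_{n+1}$, and then reduce every comparison to a statement about a single object: the common perpendicular between the lines $BC$ and $EF$, whose existence and properties are already established in Lemma \ref{lemma:saccheri-both}. Writing $A,B,C$ for the vertices of $t_n$ with $\alpha_n$ at $A$ and $D,E,F$ for the midpoints of $BC,CA,AB$, a straightforward unwinding of the naming conventions of Section \ref{sec1} identifies the four heights to compare as $d(D,\text{line }EF)$ (innermost triangle $DEF$), $d(A,\text{line }EF)$ (outer triangle $AEF$), $d(F,\text{line }BC)$ (outer triangle $BFD$) and $d(E,\text{line }BC)$ (outer triangle $CED$).

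I would then reuse the construction underlying Lemma \ref{lemma:saccheri-both}. With $A',B',C',D'$ the orthogonal projections of $A,B,C,D$ onto line $EF$, the proof of that lemma shows that $FAA'\cong FBB'$ via the $\pi$-rotation about $F$ (an isometry of both $\mathbb H^2$ and $\mathbb S^2$ which fixes line $EF$ setwise), and $EAA'\cong ECC'$ by the symmetric argument at $E$, so that $|AA'|=|BB'|=|CC'|$. Moreover Lemma \ref{lemma:saccheri-both} identifies $C'B'BC$ as a Saccheri quadrilateral with axis of symmetry $DD'$, which means that $DD'$ is perpendicular to both line $BC$ at $D$ and line $EF$ at $D'$. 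In particular $d(A,\text{line }EF)=|AA'|=|BB'|$ and $d(D,\text{line }EF)=|DD'|$.

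All four comparisons then reduce to the behaviour of the perpendicular distance function $\phi(P):=d(P,\text{line }EF)$ as $P$ varies along line $BC$ (together with the symmetric statement obtained by swapping the roles of the two lines). The point $D$ is a critical point of $\phi$ precisely because $DD'$ is a common perpendicular to both lines. In hyperbolic geometry, two disjoint geodesics admit a unique common perpendicular that realises their minimum distance, so $\phi(P)\geq|DD'|$ for every $P$ on line $BC$, strictly unless $P=D$; taking $P=B$ yields $d(D,\text{line }EF)<d(A,\text{line }EF)$, and the same argument with the two lines swapped yields $d(D,\text{line }EF)<d(F,\text{line }BC)$ and $d(D,\text{line }EF)<d(E,\text{line }BC)$. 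In spherical geometry the analogous critical point is instead a local \emph{maximum} of the perpendicular distance (two great circles meet in a single antipodal pair and the perpendicular distance between them oscillates), and every inequality above is reversed.

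The main obstacle is the spherical case: one must verify that $B,C$ on line $BC$ and $F,E$ on line $EF$ all remain within the arc around $D$ (resp.\ $D'$) on which $\phi$ (resp.\ its swapped counterpart) is monotone on either side of its critical value, rather than past the nearest intersection of the two great circles. Combining Remark \ref{small} with Lemma \ref{lemma:lengths} confines everything to a ball of radius $\pi/4$ for $n$ sufficiently large, which keeps the configuration safely inside this monotone range and legitimises the reversed inequalities.
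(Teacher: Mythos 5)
Your proposal is correct and follows essentially the same route as the paper: the same identification of the four heights, the same reuse of the projections and the Saccheri quadrilateral of Lemma \ref{lemma:saccheri-both} to get $|AA'|=|BB'|=|CC'|$, and the same comparison of each of the three outer heights against the common perpendicular $DD'$. Where the paper quotes the Lambert-quadrilateral side inequality (sides incident to the apex exceed their opposites in $\mathbb H^2$, with the reverse in $\mathbb S^2$), you invoke the equivalent extremal property of the foot of the common perpendicular for the distance-to-a-line function; this is the same geometric fact in different packaging, so nothing essential differs.
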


\begin{proof}
We stated the theorem and give the proof only for the height associated to $\alpha_{n+1}$ in the hyperbolic setting, the spherical proof is obtained by simply reversing each conclusion/inequality and the other cases are derived in the same fashion. Recall that $|A'A|=|C'C|$ and consider the Lambert quadrilateral $D'C'CD$. In the hyperbolic case, we have that $|C'C|>|DD'|$, as the sides incident to the apex are larger than their opposite sides in hyperbolic Lambert quadrilaterals (this opposite conclusion is true for spherical Lambert quadrilaterals). The same inequality can be derived for each of the other two heights by using the Lambert quadrilaterals $FDD'F$ and $DE'ED'$.
\end{proof}

\begin{proof}[Proof of Proposition \ref{lemma:altitude} (Hyperbolic Setting)]

There again, we shall only give the proof regarding the heights associated to $\alpha_n$, the proofs of the other cases are derived in the same fashion. We once again take up our notation for Lemma \ref{lemma:lengths}. In addition, let $A''$ be the orthogonal projection of $A$ onto the line $BC$ and denote by $E''$ the midpoint of $A''C$ (see Fig. \ref{fig:bat-lines}). For the sake of brevity, we shall disregard the similar but easier case where $A''=C$. 

\begin{figure}[H]
	\begin{center}     
    	\includegraphics[width=0.65\textwidth]{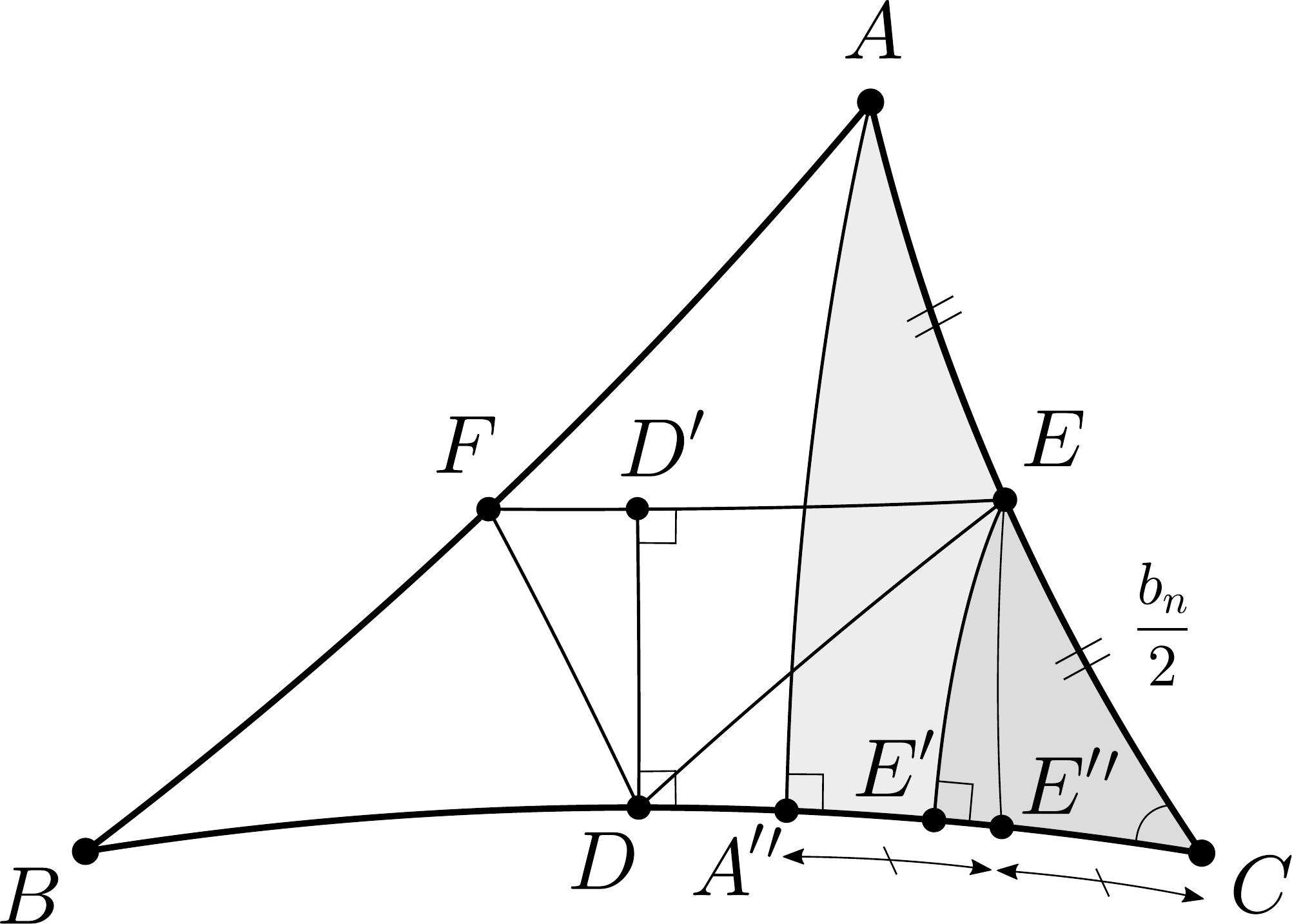}
        	\caption{}
        	\label{fig:bat-lines}
	\end{center}
\end{figure}

Once again, we begin by writing the quantity $2^n h_n$ as the following product:

\[
2^n \cdot  h_{n} = \frac{2 h_{n}}{h_{n-1}}\cdot \frac{2 h_{n-1}}{h_{n-2}}\cdot\ldots \cdot \frac{2 h_{1}}{h_{0}}\cdot h_{0} = h_{0} \> \prod_{n=1}^{n} \frac{2 h_{i}}{h_{i-1}}
\]

\textbf{Hyperbolic Upper Bound.} Note first that, $\frac{1}{2}|A''A|>|EE''|$ by Lemma \ref{lemma:mid-edge}. Using the hyperbolic Pythagoras theorem we then have that $|EE''|>|EE'|$. Lemma \ref{lemma:trick} then guarantees that $|EE'|>|DD'|$. Putting everything together, we obtain that $\frac{1}{2}h_n = \frac{1}{2}|A''A|>|EE''|>|EE'|>|DD'|$, which shows that each factor (other than $h_0)$ in the previous product is strictly less than 1. This shows the upper bound in the two cases where $t_{n+1}=CED$ (where $h_{n+1}=|EE'|$) and $t_{n+1}=FDE$ (where $h_{n+1}=|DD'|$). The case where $t_{n+1}=DFB$ is symmetrical to the case $t_{n+1}=CED$, but a slightly more circumvoluted argument is required to derive the upper bound when $t_{n+1}=EAF$. Let us denote by $\gamma$ the angle $\angle E'CA$ and by $\gamma'$ the angle $\angle A'EA=\angle C'EC$ (see Fig. \ref{fig:heights-case-3}). Note that $\gamma=\gamma_n$ and $\gamma'=\angle FEA$ if $\gamma_n \leq \frac{\pi}{2}$, but otherwise we instead have $\gamma=\pi -\gamma_n$ and $\gamma'=\pi - \angle FEA$. Applying hyperbolic trigonometric identities in the triangles $EE'C$ and $ECC'$, we obtain:

\[
\sinh |EE'| = \sin \gamma \cdot \sinh |EC|\tag{4.1}
\]
\[
\sinh |CC'| = \sin \gamma' \cdot \sinh |EC|\tag{4.2}
\]
\begin{figure}[H]
	\begin{center}     
    	\includegraphics[width=0.55\textwidth]{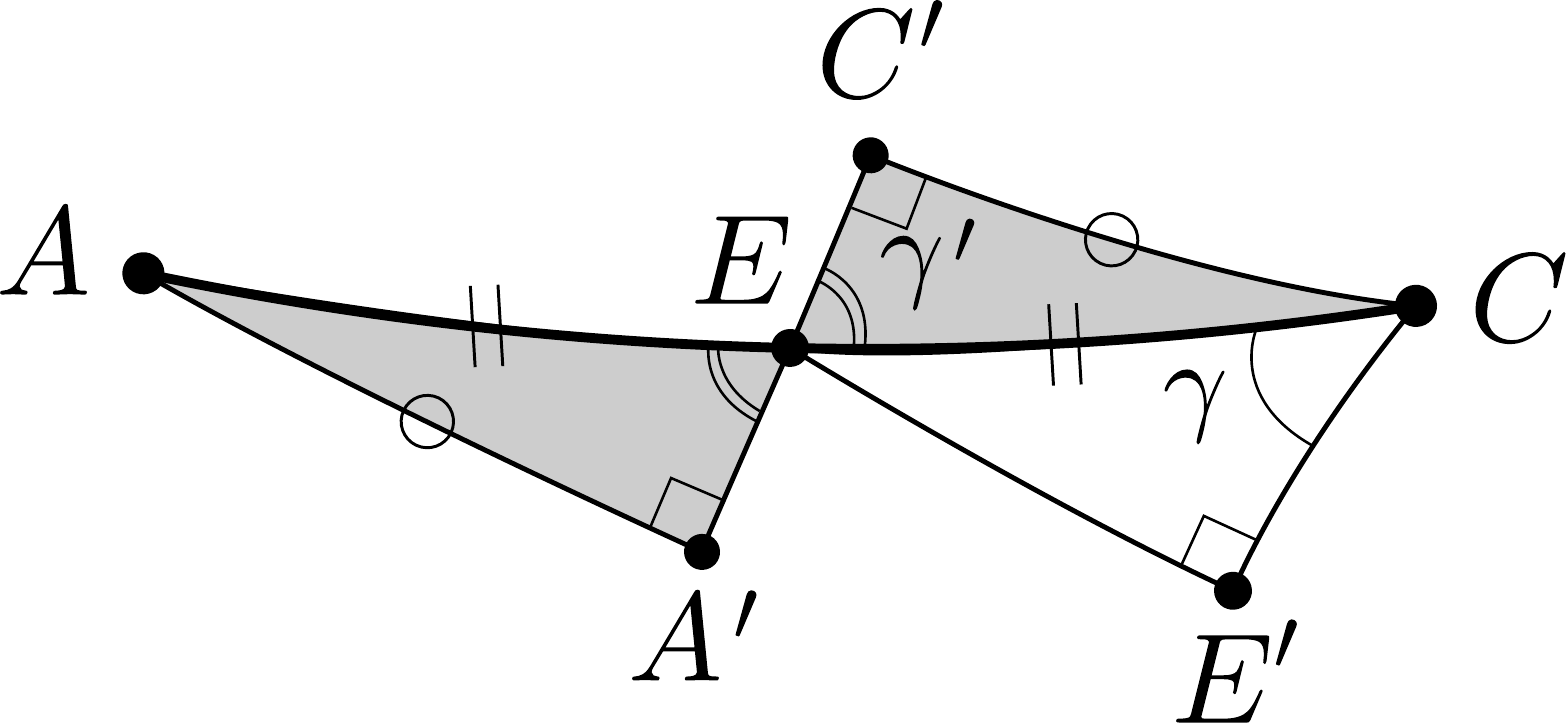}
        	\caption{}
        	\label{fig:heights-case-3}
	\end{center}
\end{figure}
Using the hyperbolic sine law in the triangles $ABC$ and $AFE$ and making use on their shared angle $\alpha_n$, we get:

\[
\sin \gamma' = \frac{\sinh \frac{c_n}{2}}{\sinh c_n} \frac{\sinh a_n}{\sinh a_{n+1}} \sin \gamma \tag{4.3}
\]

Let us denote by $\rho_{n+1}$ the factor preceding $\sin \gamma$ in the previous equation, i.e. such that we have $\sin \gamma' = \rho_{n+1} \sin \gamma$. Combining equations (4.1), (4.2) and (4.3) now gives us:
\[
\sinh |CC'|=\rho_{n+1} \> \sinh|EE'| \tag{4.4}
\]
Using the observation that for all $x>0$, we have $\sinh \frac{x}{2}<\frac{1}{2}\sinh x$,  we reach:

\[
\rho_{n+1} < \frac{\sinh \frac{a_n}{2}}{2 \sinh \frac{a_{n+1}}{2}}\> \cosh \frac{a_n}{2}\tag{4.5}
\]
Using inequality (\ref{eqn:star}) in the proof of Theorem C and noting that, for all $x>0$, we have $\sinh x > x$ and $\sinh x / x < \exp x$, we obtain that:

\[
\frac{\sinh \frac{a_n}{2}}{2 \sinh \frac{a_{n+1}}{2}} < \frac{\sinh \frac{a_n}{2}}{a_{n+1}} = \frac{a_n}{2\: a_{n+1}}\frac{\sinh \frac{a_{n}}{2}}{\frac{a_{n}}{2}}<  \exp \left(\frac{b_{n+1}}{2}\right) \frac{\sinh \frac{a_{n}}{2}}{\frac{a_{n}}{2}}<\exp \left(\frac{b_{n+1}}{2}+\frac{a_n}{2}\right)
\]
which leads us to the following upper bound on $\rho_n$ (noting that for all $x>0$, we have $\cosh x < \exp x$):

\[\rho_{n+1} < \exp \left(\frac{b_{n+1}}{2}+\frac{a_n}{2}\right)\cosh \frac{a_n}{2} < \exp \left(\frac{b_{n+1}}{2}+a_n\right)
\]
Observing that, for all $n>0$, we have $\exp \left(\frac{b_{n+1}}{2}+a_n\right)>1$, we obtain the following inequality (for any choice of $t_{n+1}$):
\[
2 \sinh h_{n+1} < 2 \max\{\rho_{n+1},1\} \sinh |EE'| < \exp \left(\frac{b_{n+1}}{2}+a_n\right) \sinh h_n
\]
From this we conclude that:

\[
2^n h_n < 2^n \sinh h_n = \sinh h_0 \prod_{i=0}^n \frac{2 \sinh h_{i+1}}{\sinh h_i}<\sinh h_0 \prod_{i=0}^n \exp \left(\frac{b_{i+1}}{2}+a_i\right) 
\]
Making use of Lemma \ref{lemma:mid-edge}, we obtain:

\[
2^n h_n < \sinh h_0 \cdot \exp \left(\sum_{i=0}^n \left(\frac{b_{i+1}}{2} + a_i\right)\right) < \frac{\sinh h_0}{h_0} \> e^{\frac{b_0}{2}+2 a_0} h_0
\]
We have thus shown that $L_h=\frac{\sinh h_0}{h_0} \> e^{\frac{b_0}{2}+2 a_0} > 1$ is a valid choice as $L_h$ indeed approaches $1$ from above as the edge lengths of $t_0$ become smaller. 

\textbf{Hyperbolic Lower Bound.} We now want a lower bound for the product $\prod_{n=1}^{n} \frac{2 h_{i}}{h_{i-1}}$. Because of Lemma \ref{lemma:trick}, we know that it is enough to derive the lower bound in the case where $t_{n+1}$ is obtained as the innermost triangle. Once again, it is equivalent to derive an upper bound  for the sum of the absolute values of the logarithm of its factors, namely: 

\[
2^n \cdot  h_{n} = h_{0} \> \prod_{i=1}^{n} \frac{2 h_{i}}{h_{i-1}}>\delta >0 \iff \left|\ln(h_{0})\right| + \sum_{i=1}^{n}\left| \ln \left( \frac{2 h_{i}}{h_{i-1}}\right)\right|<\Delta < \infty
\]
Where $e^{\Delta}=\delta$. However, it will prove easier to bound the ratio $\sinh h_n / \sinh h_{n-1}$ instead. We start by applying Lemma \ref{lemma:Lambert-Trigo} in the Lambert quadrilateral $DE'ED'$:

\[
\sinh h_{n+1} = \sinh |DD'| = \frac{\sinh |E'E|}{\cosh |D'E|} \label{eq:4.6}\tag{4.6}
\]

We can also establish the following identity through the hyperbolic sine rule applied to the triangles $CAA''$ and $CEE'$:

\[
 \frac{\sinh h_{n}}{\sinh |EE'|} = \frac{\sinh |A''A|}{\sinh |EE'|} = \frac{\sinh |CA|}{\sinh |CE|} = \frac{\sinh 2b_n}{\sinh b_n} 
\label{eq:4.7}\tag{4.7}\]
Combining both identities (\ref{eq:4.6}) and (\ref{eq:4.7}), we obtain:

\[
\frac{2 \sinh h_{n+1}}{\sinh h_{n}} = \frac{2 \sinh b_n\cdot \sinh |EE'|}{\sinh 2b_n \cdot \sinh |EE'|\cdot \cosh |ED'|} = \frac{1}{\cosh b_n\cosh|ED'|}
\tag{4.8}
\]
This shows that $2 \sinh h_{n+1} / \sinh h_{n} <1$ and  tells us that we need a lower bound on the ratios $2 \sinh h_{i}/ \sinh h_{i-1}$ instead of an upper bound as $\left| \ln \left( 2 \sinh h_{i} / \sinh h_{i-1}\right)\right|=\ln \left( \sinh h_{i-1} / 2 \sinh h_{i}\right)$. Since $|ED'| \leq b_{n+1}$, as the leg is always less than the hypotenuse in hyperbolic geometry, we obtain:

\[
\frac{2 \sinh h_{n+1}}{\sinh h_{n}} \geq \frac{1}{\cosh b_n \cdot \cosh b_{n+1}} \geq (\cosh b_n)^{-2}
\label{eq:4.8}\tag{4.9}
\]

As $\ln \cosh b_n  < b_n$ and the sequence $(b_n)_{n\in \mathbb N}$ is bounded above by the geometric series $(b_0\cdot 2^{-n})_{n\in \mathbb N}$, inequality (\ref{eq:4.8}) gives us the desired logarithm convergence criterion for $\sinh h_n$:

\[
\sum_{i=0}^{n}\left| \ln \left( \frac{2 \sinh h_{i+1}}{\sinh h_{i}}\right)\right| <2 \sum_{i=0}^n \ln (\cosh b_{i}) <2 \sum_{i=0}^n b_{i} 
\]
Taking the exponential of both sides and multiplying by $\sinh h_0$, we now get back to the original product:

\[
2^n \sinh h_n = \sinh h_0\>\prod_{i=0}^{n}\frac{2 \sinh h_{i+1}}{\sinh h_{i}} >  \sinh h_0 \> \exp\left(-\> 2b_0\>  \sum_{i=1}^{n} 2^{-i}\right) > \sinh h_0 \> e^{-2 b_0}
\]

For any $n\in\mathbb N$, we can choose $\epsilon_n = (h_n-\sinh h_n)/\sinh h_n$ so that $h_n>(1-\epsilon_n) \sinh h_n $. It is easy to see that, for all $n\in\mathbb N$, $\epsilon_n >0$ and $\lim_{n\to \infty}\epsilon_n \rightarrow 0$. Moreover, one can show the sequence $(\epsilon_n)_{n\in\mathbb N}$ to be monotone decreasing. This allows us to write, for all $n\in\mathbb N$:

\[2^n h_n > 2^n \sinh h_n (1-\epsilon_{n}) >\lim_{n\to\infty}2^n \>  \sinh h_{n}(1-\epsilon_{0}) > \sinh h_0 \> e^{-2 b_0} (1-\epsilon_{0})\]

As $t_0$ becomes smaller, both $\exp(-2b_0)$ and $(1-\epsilon_0)$ approach 1 from below. Taking $l_h=\exp(-2 b_0) (1-\epsilon_{0})$ and noticing that $\sinh h_0 > h_0$ thus finishes the proof.  
\end{proof}

\begin{proof}[Proof of Proposition \ref{lemma:altitude} (Spherical Setting)]
Some of the arguments used to derive the hyperbolic upper bound unfortunately do not translate to the spherical lower bound, which is why we take a slightly different approach here. 

\textbf{Spherical Lower Bound.} In the proof of the hyperbolic lower bound, we showed equality (4.8) using only hyperbolic trigonometry, this result is thus also valid in the spherical case (switching hyperbolic functions for the spherical ones) as long as Lemma \ref{lemma:saccheri-both} holds. Selecting the integer $N$ given by Lemma \ref{lemma:saccheri-both}, we have that, for all $n>N$ for which $t_{n+1}$ is obtained as the innermost triangle of $t_n$:

\[
\frac{2\sin h_{n+1}}{\sin h_n}=\frac{1}{\cos b_n\cos|ED'|}\tag{4.10}
\]

 For such values of $n$, we thus have $2 \sin h_{n+1} / \sin h_n>1$. 

In the case where $t_{n+1}=CED$, we make use of the spherical equivalent of identity (4.6) and the spherical Pythagoras theorem in triangle $DED'$ to obtain, for all $n>N$:

\[
\sin h_{n+1} = \sin |DD'| \> \cos |D'E| > \frac{\sin h_n}{2} \>\frac{\cos |DE|}{\cos |DD'| } > \frac{\sin h_n}{2} \> \cos c_{n+1} \tag{4.11}
\]
which shows that for such values of $n$, we have $2 \sin h_{n+1} / \sin h_n>\cos c_{n+1}$. The case where $t_{n+1}=DFB$ is symmetrical. The case where $t_{n+1}=EAF$ is dealt with in the exact same way as the hyperbolic upper bound case, substituting spherical functions for the hyperbolic ones, reversing all the inequalities used and making use of (3.4) instead of (\ref{eqn:star}). In this fashion, we reach the inequality:

\[
\sin h_{n+1} > \sin |EE'|\>\cos \frac{a_n}{2}\>\cos \frac{a_{n+1}}{2}>\frac{\sin h_n}{2}\cos c_{n+1}\left(\cos \frac{a_n}{2}\right)^2
\]
Putting all the cases together and observing that, for all $n>N$, we have $\cos c_{n+1}\left(\cos \frac{a_n}{2}\right)^2<\cos c_{n+1}<1$, we obtain the following inequality (for all choices of $t_{n+1})$:

\[
{\textstyle 2 \sinh h_{n+1} > \min \bigg\{{\cos c_{n+1}\left(\cos \frac{a_n}{2}\right)^2,\cos c_{n+1},1 }\bigg\}\sin h_n = \cos c_{n+1}\left(\cos \frac{a_n}{2}\right)^2 \sin h_n }
\]
Noting that for all $x>0$, we have $x>\sin x$, we obtain:

\[2^n h_{n} > 2^n \sin h_{n}  = \sin h_{0} \> \prod_{i=0}^{N-1} \frac{2 \sin h_{i+1}}{\sin h_{i}}\prod_{i=N}^{n} \frac{2 \sin h_{i+1}}{\sin h_{i}} > C_N \> \sin h_0 \prod_{i=N}^{n}\frac{2 \sin h_{i+1}}{\sin h_{i}}\]
where $C_N = \prod_{i=0}^{N-1} \frac{2 \sin h_{i+1}}{\sin h_{i}}$. Combining this last inequality with our lower bound for $2 \sin h_{i+1}/\sin h_{i}$, we obtain:

\[2^n h_{n} >  C_N \> \sin h_0 \prod_{i=N}^{n}\cos c_{i+1}\left(\cos \frac{a_i}{2}\right)^2\]
Adjusting our choice of $N$ to ensure that, for all $n>N$, we have $\cos c_{n+1},\> \cos \frac{a_n}{2}<\frac{\pi}{3}$, we can make use of the inequality $\cos x > e^{-x}$, for all $x\in(0,\frac{\pi}{3})$. Using Theorem C, we obtain:

\[2^n h_{n} >  C_N \> \sin h_0 \exp \>\left( - \sum_{i=0}^n (c_{i+1}+a_i) \right)>\frac{\sin h_0}{h_0} C_N e^{-(c_0 L_c + 2 a_0 L_a)} h_0\]
We have thus shown that $l_h=\frac{\sin h_0}{h_0} C_N e^{-(c_0 L_c + 2 a_0 L_a)}<1$ is a valid choice as $l_h$ indeed approaches 1 from below as the edge lengths of $t_0$ become smaller.

\textbf{Spherical Upper Bound.} Just like for the lower bound in the hyperbolic setting, Lemma \ref{lemma:trick} allows us to deal only with the case where $t_{n+1}$ is the innermost triangle of $t_n$. Since we showed that for all $n>N$, we have $2 \sin h_{n+1} / \sin h_n>1$, we now need an upper bound as $\left| \ln \left( 2 \sin h_{i+1} / \sin h_{i}\right)\right|=\ln \left( 2 \sin h_{i} / \sin  h_{i-1}\right)$. But since the cosine function is decreasing on $[0,\pi]$, we still desire an upper bound on $|ED'|$ in (4.10). Our previous remark that $|ED'| \leq b_{n+1}$, for all $n>N$, is again enough to yield the desired upper bound:
\[
\frac{2 \sin h_{n+1}}{\sin h_{n}} \leq \frac{1}{\cos b_n \cdot \cos b_{n+1}} \leq (\cos b_{n+1})^{-2}
\label{eq:4}\tag{4.12}
\]

Lemma \ref{lemma:lengths} allows us to adjust our choice of $N$ such that the series $(b_n)_{n>N}$ is bounded above by the geometric series $(b_0\cdot (\frac{C}{2})^{n})_{n>N}$. Adjusting one last time our choice of $N$ to ensure that all edge lengths are smaller than $\pi / 3$, we can once again guarantee that $-\ln \cos b_n  < b_n$, for all $n>N$. Writing $C'_N=\sum_{n=0}^{N}\ln\left(2 \sin h_{n+1}/\sin h_n\right)$, we can write:

\[
\sum_{i=0}^{n}\left| \ln \left( \frac{2 \sin h_{i+1}}{\sin h_{i}}\right)\right| <-2 \sum_{i=N}^n \ln (\cos b_{i+1})+C'_N < 2 \sum_{i=N}^n b_{i+1} +C'_N 
\]

which gives us:

\[
\left|\ln(h_0) \right|+ \sum_{i=0}^{n}\left| \ln \left( \frac{2 \sin h_{i+1}}{\sin h_{i}}\right)\right| < \left|\ln(h_0) \right| + 2b_0 \sum_{i=N}^{n} \left(\frac{C}{2}\right)^{i}+C'_N
\]
After taking the exponential of both sides, we retrieve:
\[
2^n \sin h_n = h_0 \prod_{i=0}^n \frac{2\sin h_{i+1}}{\sin h_i} < h_0\exp\left(4b_0\frac{(C/2)^N}{2-C}+C'_N\right)
\]

Let then $\epsilon_n = h_n / \sin h_n -1$. It is straightforward to check that for all $n$, we have $\epsilon_n>0$ and $\lim_{n\to\infty}\epsilon_n\rightarrow 0$. It is also easy to establish that $(\epsilon_n)_{n\in\mathbb N}$ is a monotone and decreasing sequence. Because of these observations, we can write:

\[
2^n h_n < (1+\epsilon_n)\> 2^n \sin h_n < h_0 \> (1+\epsilon_0) \exp\left(4b_0\frac{(C/2)^N}{2-C}+C'_N\right)
\]

Choosing $l_h=(1+\epsilon_0) \exp\left(4 b_0\frac{(C/2)^N}{2-C}+C'_N\right)$ finishes the proof. Indeed, both $N$ and $C'_N$ converges to zero for $t_0$ small enough and the exponential goes to 1 as $b_0$ grows smaller and $C$ is fixed. Both $(1+\epsilon_0)$ and the exponential approach $1$ (the exponent is always strictly positive).
\end{proof}

\section{Stabilisation of Angles}
\label{sec5}

In this section, we quickly derive the proof of our main result as a corollary of Proposition \ref{lemma:altitude} and Theorem \ref{thm:lengths} and show that the angles in the subdivision behave ``nicely'' in the limit, in a sense made precise by the following proposition:

\vspace{1em}

\noindent\textbf{Theorem B. }\textit{For any sequence of nested triangles $t_0,t_1, \ldots$ and for all $n\in\mathbb N$, there exists $l_\alpha,L_\alpha>0$ such that:
\[ \alpha_0 \cdot l_{\alpha} < \alpha_n < \alpha_0\cdot L_\alpha \]
In addition, $l_\alpha$ (resp. $L_\alpha$) approaches $1$ from below (resp. above) as all the side lengths of $t_0$ become smaller.}

\begin{proof}
We derive the proof in the hyperbolic case, the spherical case is obtained in the exact same fashion by swapping hyperbolic trigonometric functions for spherical ones and reversing the appropriate inequalities. 

Let $h_n$ denote the height of the altitude drawn from the vertex incident to $\alpha_n$ onto the line prolonging the side of $t_n$ of length $c_n$. Trigonometric identities (\cite{WT},~p.81) give us that $\sin \alpha_n$ is simply the ratio of $\sinh h_n$ by $\sinh b_n$. Let $\epsilon_n=1-\sinh (h_0 \>l_h\> 2^{-n})/(l_h \> 2^{-n}\sinh h_0)$. It is easy to check that $\epsilon_n >0$ for all $n\in \mathbb N$, and $\lim_{n\to\infty} \epsilon_n = 0$. Similarly, we define $\epsilon'_n$ as the quantity $1-\sinh (b_0 \>l_b\> 2^{-n})/(l_b \> 2^{-n}\sinh b_0)$ and make the same observations. We remark as well, that, for all $x>0$ and all $0<\epsilon <1$, we have $\sinh (\epsilon x) < \epsilon \sinh x$. Putting the previous observations together with Theorem \ref{thm:lengths} and Proposition \ref{lemma:altitude} gives us the two following two chains of inequalities:

\[
\sin \alpha_n 
=
 \frac{\sinh{h_n}}{\sinh{b_n}}
 < 
 \frac{\sinh(h_0 \> 2^{-n})}{\sinh ( b_0 \> l_b \> 2^{-n})}
 <
 \frac{(\sinh h_0)\>2^{-n}}{(1-\epsilon'_n)\>l_b \> 2^{-n}\sinh b_0}
 =
\sin \alpha_0 (1-\epsilon'_n)^{-1}l_b^{-1}
\]

\[
\sin \alpha_n 
=
 \frac{\sinh{h_n}}{\sinh{b_n}}
 >
  \frac{\sinh ( h_0 \> l_h \> 2^{-n})}{\sinh(b_0 \> 2^{-n})}
 >
  \frac{(1-\epsilon_n)\>l_h \> 2^{-n}\sinh h_0}{(\sinh b_0)\>2^{-n}}
 =
\sin \alpha_0 (1-\epsilon_n)l_h
\]

It is easy to show that both $(\epsilon_n)_{n\in\mathbb N}$ and $(\epsilon'_n)_{n\in\mathbb N}$ are monotone sequences decreasing to 0. Putting together both previous inequalities, we obtain:

\[
\sin \alpha_0 (1-\epsilon_0)l_h
<
\sin \alpha_n 
<
\sin \alpha_0\left((1-\epsilon'_0)l_b\right)^{-1}\tag{5.1}
\]

Choosing $l'_\alpha = (1-\epsilon_0)l_h$ and $L'_\alpha =\left((1-\epsilon'_0)l_b\right)^{-1}$ then yields the desired inequality on the sines of the angles. Indeed, both $1-\epsilon_0$ and $l_h$ approach $1$ from below as $t_0$ becomes smaller. Likewise, both $(1-\epsilon'_0)$ and $l_b$ approach $1$ from above as $t_0$ becomes smaller. 

While the sine function is monotonous and continuous, each element of the open interval $(0,1)$ has two pre-images under it, one in the interval $(0,\frac{\pi}{2})$ and the other in the interval $(\frac{\pi}{2},\pi)$. The following claim will allow us to differentiate between these two pre-images and establish the equivalent of inequality (5.1) for the angles: 
\vspace{0.5em}

\textbf{Claim. }\textit{In the hyperbolic setting, for any sequence of nested triangles $t_0,t_1, \ldots$ such that $\alpha_0>\frac{\pi}{2}$, and for all $n\in\mathbb N$, we have $\alpha_n > \frac{\pi}{2}$. In the spherical setting, for any sequence of nested triangles $t_0,t_1, \ldots$, there exists $N\in\mathbb N$  such that if $\alpha_N<\frac{\pi}{2}$, for all $n>N$, we have $\alpha_n < \frac{\pi}{2}$.}

\begin{figure}[H]
	\begin{center}     
    	\includegraphics[width=0.6\textwidth]{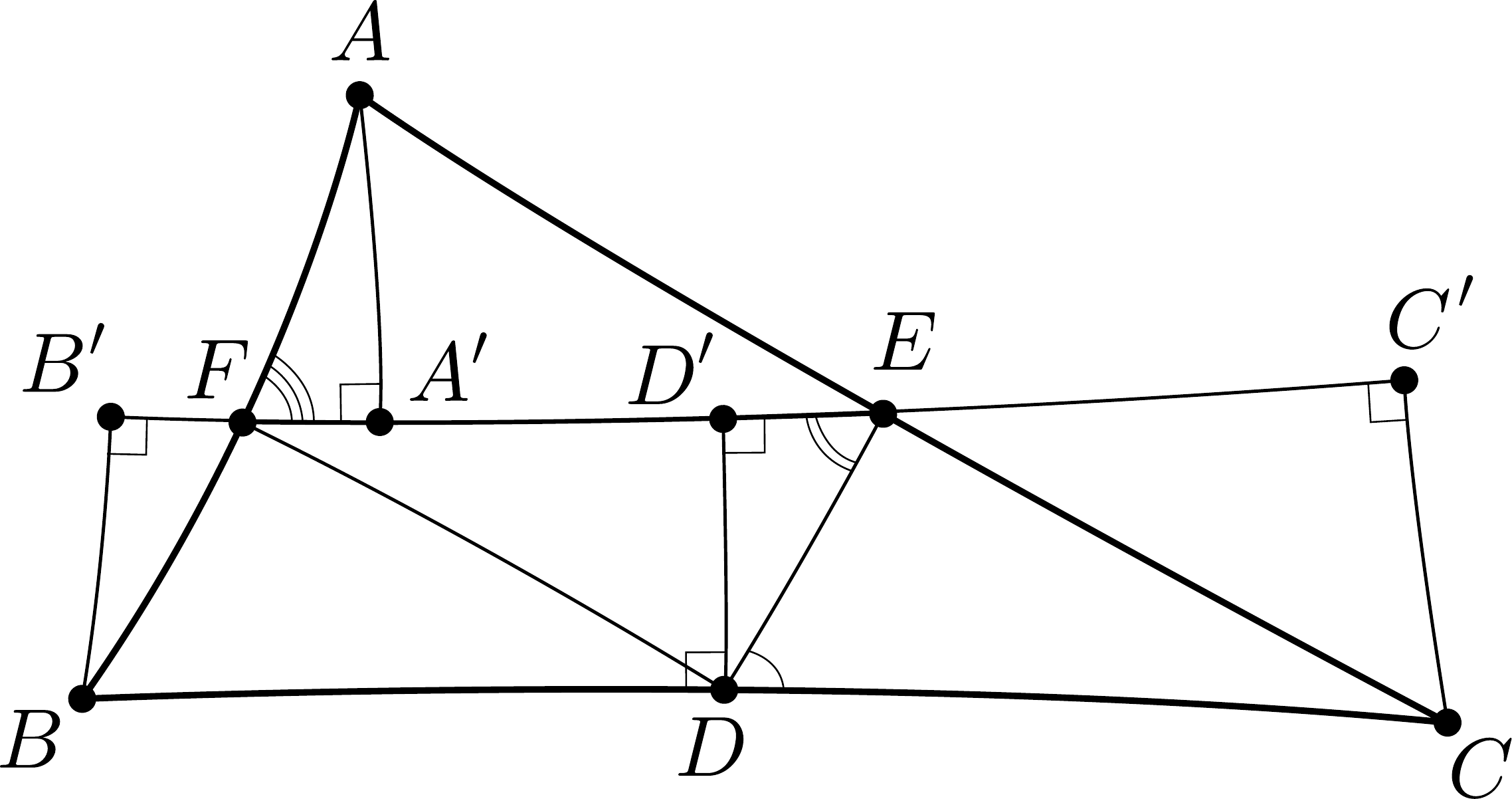}
        	\caption{}
        	\label{fig:last-proof-1}
	\end{center}
\end{figure}

\noindent \textit{Proof of Claim.} In order to allow us to reuse some of our previous constructions and notations, we prove our claim for $\beta_n$ instead of $\alpha_n$. It is enough to show that, for all $n\in\mathbb N$ and any choice of $t_{n+1}$, we have that $\beta_n > \frac{\pi}{2}$ implies $\beta_{n+1} > \frac{\pi}{2}$. Suppose not, and there exists an integer $n$ and a choice of $t_{n+1}$ such that $\beta_n > \frac{\pi}{2}$ but $\beta_{n+1} \leq \frac{\pi}{2}$. There are three possible cases. Let us first investigate the case where $t_{n+1}$ was chosen as the innermost triangle of $t_n$. Following the notation of Figure \ref{fig:construction-both}, where $t_{n+1}=DEF$, we see that if $\beta_{n+1}=\angle FED \leq \frac{\pi}{2}$, $D'$ must be positioned on the closed half-line starting at $E$ and containing the edge $FE$ (see Fig. \ref{fig:last-proof-1}). Since $|B'D'|=|FE|$, this entails that $B'$ lies on the closed half-line starting at $F$ and not containing the open edge $FE$. But since $\beta_n$ is obtuse and the angle at the apex of a hyperbolic Saccheri quadrilateral is acute, $B'$ must in fact lie on the open half-line starting at $F$ and containing the open edge $FE$. A similar contradiction is derived for the other two cases: if $t_{n+1}=EDC$, $D'$ must also be positioned on the half-line starting at $E$ and containing the edge $FE$ and the same contradiction is derived; the case where $t_{n+1}=AFE$ is symmetric with the previous case, exchanging $A$ and $C$.

The exact same argument solves the spherical case, selecting $N\in \mathbb N$ according to Lemma \ref{lemma:saccheri-both} and reversing all the inequalities. To derive the contradiction, this time, we appeal to the fact that $\beta_n$ is acute and that the angle at the apex of a spherical Saccheri quadrilateral is obtuse.

\begin{figure}[H]
	\begin{center}     
    	\includegraphics[width=\textwidth]{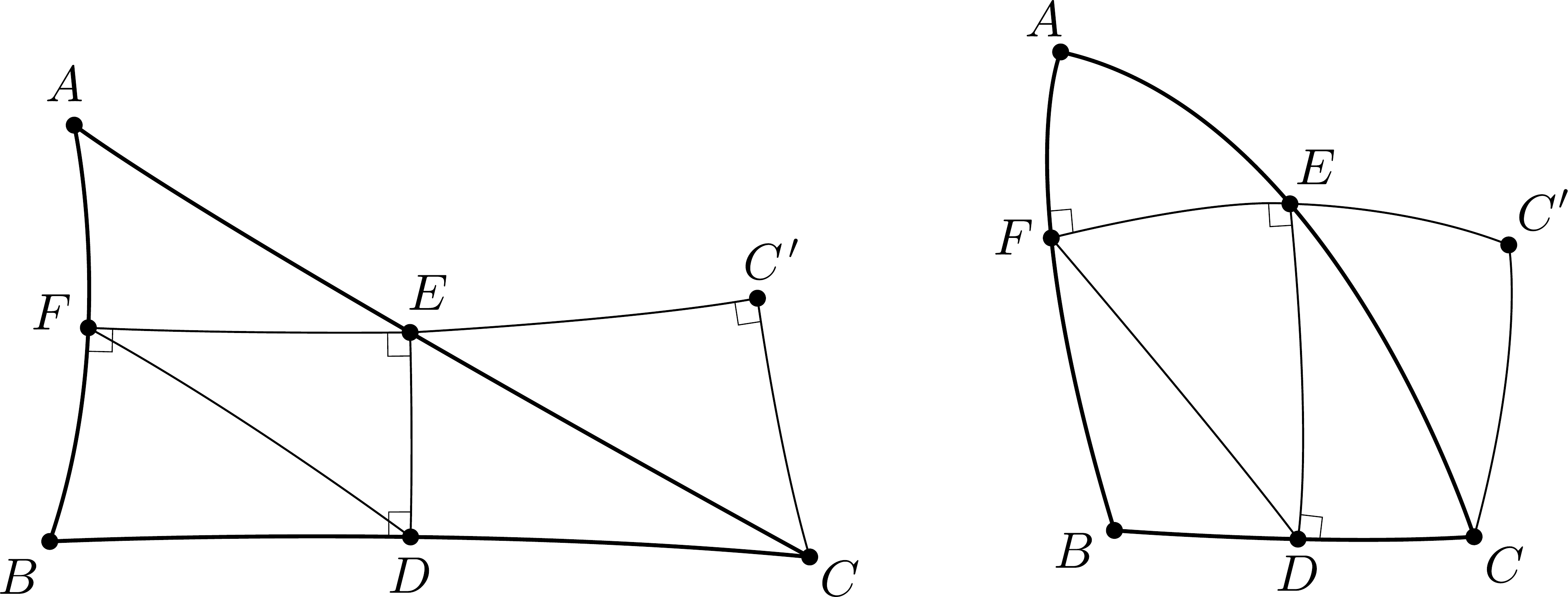}
        	\caption{}
        	\label{fig:last-proof}
	\end{center}
\end{figure}
\begin{rem}
Interestingly, in both the hyperbolic case and the spherical case (as long as $|DD''|\neq\frac{\pi}{2}$), whenever one of the angles $\angle FED$, $\angle EFA$ or $\angle CDE$ is right, they must in fact all be simultaneously right (see Fig. \ref{fig:last-proof}). Indeed, if $\angle EFA$ is right, then $B'=F$ and thus $D'=E$ which forces both other angles to be right. Likewise, if $\angle CDE$ (resp. $\angle FED$) is a right angle, we have $D'=E$ which means that $\angle FED$ (resp. $\angle EFA$) is a right angle and also forces $B'=F$, which implies $\angle EFA$ is also right.
\end{rem}
\end{proof}

As an immediate consequence of Theorem B, we deduce our main theorem:

\vspace{1em}

\noindent\textbf{Theorem A. }\textit{For any geodesic triangle $T$ in $M_\kappa^2$, there exists $\delta>0$ such that, for all $n\in \mathbb N$, all the angles of $T_n$ lie in the interval $(\delta, \pi - \delta)$.}

\begin{proof}
Let $t_0=T_0$ and $\delta_\alpha = \min\{\alpha_0 l_\alpha, (\pi-\alpha_0 L_\alpha)\}$, $\delta_\beta = \min\{\beta_0 l_\beta, (\pi-\beta_0 L_\beta)\}$ and $\delta_\gamma = \min\{\gamma_0 l_\gamma, (\pi-\gamma_0 L_\gamma)\}$. By construction, $\delta\coloneqq \min\{\delta_\alpha, \delta_\beta,\delta_\gamma\} >0$ and every angle of $T_n$ lies in the interval $(\delta, \pi-\delta)$.
\end{proof}
\begin{rem}
\label{alternativeroute}
Both in the hyperbolic and spherical case, there is an easier route to proving Theorem A without the full strength of Theorem B. We briefly explain here why, in both cases, a lower bound on the angles in fact also gives an upper bound. Indeed, in the hyperbolic case, to establish the upper bound in Theorem \ref{thm:main} we need only remember that the sum of the angles of a hyperbolic triangle is always \textit{less} than $\pi$. Therefore if we assume by contradiction that for all $\epsilon > 0$, there exists $N\in \mathbb N$ such that $\alpha_N>\pi - \epsilon$, since for all $n\in \mathbb N$, we have $\beta_n, \gamma_n > \delta$, we are lead to a contradiction for $\epsilon < 2\delta$ as we would then obtain that $\alpha_n + \beta_n +\gamma_n > 2\delta + \pi - 2\delta > \pi$ for some choice of $n$. In the spherical case, the sum of the angles of a triangle is allowed to exceed $\pi$, but the amount by which it does is exactly the area of the triangle. To derive a contradiction we thus need to consider a second parameter to utilise the key property that our triangulations are getting arbitrarily small. Let $A_n$ denote the area of the triangle $t_n$. Suppose then by contradiction that, for all $\epsilon, \epsilon'>0$, there exists $N\in \mathbb N$ such that $\alpha_N>\pi - \epsilon$ and $A_n < \epsilon'$. Let us pick $\epsilon'<2\delta$ and $\epsilon < 2\delta - \epsilon'$. Since for all $n\in \mathbb N$, we have $\beta_n, \gamma_n > \delta$, this leads us to the following contradiction: 

\[
\alpha_n + \beta_n + \gamma_n > 2 \delta + \pi - \epsilon > \pi + \epsilon'
\]

Because of this observation, and using our approach to proving Theorem B, readers interested solely in proving Theorem A need only refer to the proofs of the lower bounds in the proof of Proposition 4.1 and the proofs of the upper bounds in the proof of Theorem C.
\end{rem}


\begin{bibdiv}
\begin{biblist}
\bib{BS}{article}{
  title={Uniformly acute triangulations of PSLGs, \url{https://www.math.stonybrook.edu/~bishop/papers/acutepslg.pdf}},
  author={Bishop, Christopher J},
  journal={Preprint},
  year={2021},
}

\bib{BH}{book}{
     AUTHOR = {Bridson, Martin R.},
     AUTHOR = {Haefliger, Andr\'{e}},
     TITLE = {Metric spaces of non-positive curvature},
    SERIES = {Grundlehren der Mathematischen Wissenschaften [Fundamental
              Principles of Mathematical Sciences]},
    VOLUME = {319},
 PUBLISHER = {Springer-Verlag, Berlin},
      YEAR = {1999},
     PAGES = {xxii+643},
}

\bib{Martin}{book}{
    AUTHOR = {Martin, George E.},
     TITLE = {The foundations of geometry and the non-Euclidean plane},
    SERIES = {Undergraduate Texts in Mathematics},
      NOTE = {Corrected third printing of the 1975 original},
 PUBLISHER = {Springer-Verlag, New York},
      YEAR = {1996},
     PAGES = {xvi+509},
}
   
\bib{WT}{book}{
    AUTHOR = {Thurston, William P.},
     TITLE = {Three-dimensional geometry and topology. {V}ol. 1},
    SERIES = {Princeton Mathematical Series},
    VOLUME = {35},
      NOTE = {Edited by Silvio Levy},
 PUBLISHER = {Princeton University Press, Princeton, NJ},
      YEAR = {1997},
     PAGES = {x+311},
}

\bib{Verdiere}{article}{
 title={Triangulations presque \'equilat\'erales des surfaces},
 author={Colin de Verdi\`ere, Yves},
 author={Marin, Alexis},
 year={1990},
 journal={Journal of Differential Geometry},
 volume={32},
 publisher={Citeseer}
}

\bib{ZA}{article}{
  title={Acute triangulations: a short survey},
  author={Zamfirescu, Tudor},
  booktitle={Proceedings of the VI Annual Conference of the Romanian Society of Mathematical Sciences},
  volume={1},
  pages={10--18},
  year={2002}
}

\end{biblist}
\end{bibdiv}
\end{document}